\DeclareMathAlphabet{\mathdutchcal}{U}{dutchcal}{m}{n}
\newcommand\comp{\mathrel{\overset{\makebox[0pt]{\mbox{\normalfont\tiny\sffamily cmp}}}{\sim}}}
\newcommand\ncomp{\mathrel{\overset{\makebox[0pt]{\mbox{\normalfont\tiny\sffamily cmp}}}{\nsim}}}
\theoremstyle{plain}
\newtheorem{thm}{Theorem}[section] % reset theorem numbering for each section
\newtheorem{corollary}{Corollary}[thm]
\newtheorem{lemma}[thm]{Lemma}
\newtheorem{prop}[thm]{Proposition}
\theoremstyle{definition}
\newtheorem{defn}[thm]{Definition} % definition numbers are dependent on theorem numbers
\newtheorem{exmp}[thm]{Example}
\def\thmheadassumption#1#2#3{%
	\thmname{#1}\thmnumber{\@ifnotempty{#1}{ }\@upn{#2}}%
	\thmnote{ {\the\thm@notefont[#3]}}}
\newtheoremstyle{assumption}% Name
{}% space above
{}% space below
{\itshape}% body font
{}% indent
{\bfseries}% head font
{.}% punctuation after head
{ }% space after head (has to be space or dimension!)
{\thmheadassumption{#1}{(A#2)}{#3}}% head spec
\theoremstyle{assumption}
\theoremstyle{remark}
\newtheorem{rmk}{Remark}
\newtheorem*{notation}{Notation}
\newcommand{\R}{\mathbb{R}}
\newcommand{\N}{\mathbb{N}}
\newcommand{\Z}{\mathbb{Z}}
\newcommand{\Prob}{\mathbb{P}}
\newcommand{\dt}{\, \mathrm{d}t}
\newcommand{\di}{\, \mathrm{d}}
\newcommand{\tvnorm}{\mbox{\normalfont\scriptsize\sffamily TV}}
\newcommand{\tvnormbig}{\mbox{\normalfont\sffamily TV}}
\newcommand{\norm}[1]{\left\lVert #1 \right\rVert}
\newcommand{\hilbert}{\mathcal H}
\DeclareMathOperator*{\esssup}{ess\,sup}
\DeclareMathOperator*{\essinf}{ess\,inf}
\newcommand{\subalign}[1]{%
	\vcenter{%
		\Let@ \restore@math@cr \default@tag
		\baselineskip\fontdimen10 \scriptfont\tw@
		\advance\baselineskip\fontdimen12 \scriptfont\tw@
		\lineskip\thr@@\fontdimen8 \scriptfont\thr@@
		\lineskiplimit\lineskip
		\ialign{\hfil$\m@th\scriptstyle##$&$\m@th\scriptstyle{}##$\hfil\crcr
			#1\crcr
		}%
	}%
}
\begin{document}
	
	\title{Hyperbolic contractivity and the Hilbert metric on probability measures}
	
	\author{Samuel N. Cohen\thanks{Mathematical Institute, University of Oxford and Alan Turing Institute, {\tt cohens@maths.ox.ac.uk}.}
		\and
		Eliana Fausti\thanks{Department of Mathematics, Imperial College London, {\tt eliana.fausti@imperial.ac.uk}.}}
	
	\date{\today}
	
	\maketitle
	
	\begin{abstract}
		This paper gives a self-contained introduction to the Hilbert projective metric $\hilbert$ and its fundamental properties, with a particular focus on the space of probability measures. We start by defining the Hilbert pseudo-metric on convex cones, focusing mainly on dual formulations of $\hilbert$. We show that linear operators on convex cones contract in the distance given by the hyperbolic tangent of $\hilbert$, which in particular implies Birkhoff's classical contraction result for $\hilbert$. Turning to spaces of probability measures, where $\hilbert$ is a metric, we analyse the dual formulation of $\hilbert$ in the general setting, and explore the geometry of the probability simplex under $\hilbert$ in the special case of discrete probability measures. Throughout, we compare $\hilbert$ with other distances between probability measures. In particular, we show how convergence in $\hilbert$ implies convergence in total variation, $p$-Wasserstein distance, and any $f$-divergence. Finally, we derive a novel sharp bound for the total variation between two probability measures in terms of their Hilbert distance. 
	\end{abstract}
	
	\begin{adjustwidth}{0.6cm}{0.6cm}
		\begin{flushleft}
			\textbf{MSC}: 47A35, 15B51, 60B10, 52B12, 62B11, 94A15
			
			\textbf{Keywords}: Hilbert projective metric, probability simplex, contraction of positive linear operators.
		\end{flushleft}
	\end{adjustwidth}
	
	\section{Introduction}
	
	The main goal of this paper is to provide a clear, self-contained guide to the Hilbert projective (pseudo-)metric and its merits in applied probability theory. In particular, as we discuss in more detail below, this metric may serve as a powerful tool for understanding whether linear operators on probability measures contract, in what sense, and at what rate. This, of course, is part of a much bigger story concerning the contraction of linear operators on positive cones---of which the space of (non-negative) measures is a prominent example.
	
	In \cite{birkhoff57}, Garrett Birkhoff\footnote{The son of George D.~Birkhoff, who is better known in probabilistic circles for his proof of the ergodic theorem.} proved that positive linear operators on a positive convex cone contract in the Hilbert projective (pseudo-)metric (introduced by Hilbert in \cite{hilbert1895}). Starting from this result, he easily derives a proof of the famous Perron theorem, by essentially reducing it to a special case of the Banach fixed point theorem. Similarly, he also immediately proves several of its generalizations: first, the extension of the Perron theorem to non-negative matrices due to Frobenius, then the extensions to infinite dimensional function spaces and positive integral and compact operators, originally due to Jentzsch \cite{jentzsch} and Krein and Rutman \cite{krein_rutman48}. In the same year as Birkhoff, Samelson \cite{samelson57} also published a proof of the Perron--Frobenius theorem using projective geometry, and similarly a few years later Hopf \cite{hopf63} presented a contraction result for positive linear integral operators and an alternative proof of Jentzsch's theorem (apparently without being aware of Birkhoff's previous results). The combination of the Hilbert projective metric with the contraction mapping theorem has inspired a vast amount of further work on the Perron--Frobenius theorem and its various extensions, see  Lemmens and Nussbaum \cite{nonlinear_perron12} and references therein for a detailed overview of the topic. We highlight the early works of Thompson \cite{thompson63}, Bushell \cite{bushell73}, and Kohlberg and Pratt \cite{kohlberg82}, as well as the recent works by Rugh \cite{rugh10} and Dubois \cite{dubois09} on operators on complex cones which have strongly influenced the presentation in this paper.
	
	While the impact on Perron--Frobenius theory is certainly the most significant consequence of Birkhoff's work, we became interested in the Hilbert metric and Birkhoff's contraction result due to a different (although related) application: the study of the ergodicity of non-negative linear operators. Work in this direction was presented by Birkhoff himself in \cite[Chapter~XVI,~Sec.7-8]{birkhoff_lattice}, Seneta and Sheridan \cite{seneta81, seneta_matrices} and Le Gland and Mevel \cite{legland00_matrices}. Clearly, the ergodic theory of Markov processes with linear transition kernels can be reduced to a particular case of this broader topic. Nevertheless, the Hilbert metric is not strictly necessary for developing the standard ergodic theory of Markov processes. For example, in \cite[Chapter~3]{seneta_matrices} Seneta uses the Hilbert metric to study the ergodicity of inhomogenous products of non-negative matrices; on the other hand, in the following chapter \cite[Chapter~4]{seneta_matrices}, which treats specifically the ergodicity of discrete Markov chains, the Hilbert metric is not used, since the analysis simplifies by virtue of the generators being stochastic matrices.
	
	Thus, one of the main avenues of application for the Hilbert projective metric in probability theory is, in fact, relatively unknown, which might in part explain why, despite being quite powerful tools, neither the Hilbert metric nor Birkhoff's contraction results seem to have found widespread use in the probability community\footnote{To illustrate this point, no mention is made of this approach in any of the books by Ethier and Kurtz \cite{ethier2009markov}, Meyn and Tweedie \cite{meyn2009markov}, Br\'emaud \cite{bremaud2017discrete}, Kallenberg \cite{kallenberg1997foundations}, or Jacod and Shiryaev \cite{jacod2003limit}.}. There are of course a few notable exceptions: Birkhoff's contraction theorem was employed, for example, in an elegant proof of the linear convergence of the Sinkhorn algorithm by Franklin and Lorenz \cite{franklin_lorenz_sinkhorn} (see also a recent extension of this result by Eckstein \cite{eckstein24}). It also proved fundamental in the qualitative and quantitative analysis of the stability of hidden Markov processes, where the use of the Hilbert metric was introduced by Atar and Zeitouni \cite{atar-zeitouni-lyapunov97,atar-zeitouni97}, and, following their work, became a well-established approach to the problem, see e.g.~\cite{legland00_hmm,legland04,bax-chiga-lip04,cohen_fausti_23}. More recently, the Hilbert metric has found computational applications in entropic interpolation \cite{entropic_disp_hilbert} and nonlinear embeddings \cite{nielsen22}.
	
	In writing this paper, we had in mind applied probabilists and statisticians who find themselves curious, as we were, about the merits and limitations of working with the Hilbert projective metric in a probabilistic context. Consequently, our main focus is the study of the space of probability measures when equipped with the Hilbert projective metric.
	
	Nevertheless, we shall start in greater generality, by defining the Hilbert projective (pseudo-)metric on a proper convex cone in a locally convex topological vector space. From there, our main contribution in Section~\ref{sec:defn_and_contraction} is the introduction of a new (pseudo-)metric, the hyperbolic tangent of the Hilbert metric (which we call the $\mathcal{T}$-distance), under which we prove linear operators also contract (see Definition~\ref{def:t-distance} and Theorem~\ref{thm:tanh_contaction}). The advantage of using the $\mathcal{T}$-distance compared to the Hilbert projective metric is that $\mathcal{T}$ stays bounded, while the Hilbert metric (easily) diverges to infinity. As far as we are aware, the formulation of this contraction result in Theorem~\ref{thm:tanh_contaction} has not been given before (but we note that our proof is inspired by Dubois' proof of \cite[Thm.~2.3]{dubois09}, so we do not wish to claim full credit for the result).
	
	When introducing the Hilbert projective metric, we especially insist on its definition through duality, which we first came across in \cite{rugh10}. This `dual' definition, in particular through a predual space, is natural in the context of probability measures, where distances are often defined by considering measures as integrators dual to particular classes of functions. In Section~\ref{sec:hilbert_metric_measures} we provide a careful analysis of the Hilbert metric and the topology it induces on the space of probability measures using duality, and we show that convergence of measures in the Hilbert metric (or in the $\mathcal{T}$-distance) is stronger than convergence in total variation (as was already shown in \cite[Lemma~1]{atar-zeitouni97}), convergence in $p$-Wasserstein distance and convergence in any $f$-divergence.
	
	In Section~\ref{sec:geometry} we study the geometry of the probability simplex under the Hilbert metric. Using our dual approach, we give an easy derivation of the explicit formula for the contraction rate of a linear operator (Proposition~\ref{prop:contraction_coeff}), which can also be extended to infinite settings (Proposition~\ref{prop:bikhoff_coeff_kernel}). As the probability simplex is finite dimensional, it has a natural manifold structure, however the Hilbert metric does not induce a hyperbolic (in the sense of Gromov) geometry on the simplex (since its boundary is not differentiable, see \cite{Benoist03}), nor a Riemannian structure. The Hilbert geometry is far more curious: we find an explicit characterizations of Hilbert balls as `hexagonal' convex polytopes, extending to the $n$-dimensional case work by Phadke \cite{phadke_hexagons75} and de la Harpe \cite{delaHarpe_simplices}. Finally, in the last section of the paper we use these geometric observations to prove a sharp bound for the total variation distance between two probability measures in terms of their $\mathcal{T}$-distance (see Theorem~\ref{thm:hilbert_and_L1_simplex} and Cor.~\ref{cor:tv_H_bound_prob_measures}).
	
	\section{The Hilbert projective metric: definitions and contractivity}\label{sec:defn_and_contraction}
	
	Let us start with the definition of the Hilbert projective distance, in the sense of Birkhoff \cite{birkhoff57,birkhoff_lattice}, on a cone in a (real) locally convex topological vector space (LCS). Note that Birkhoff works more specifically with cones in real Banach spaces (lattices). However, as we will see shortly, the definition of the metric does not require the space to be Banach, so for the sake of generality we work with an LCS.
	
	Let $X$ be an LCS. Let $C \subset X$ be a proper closed convex cone, meaning that $C$ is closed and satisfies
	\begin{equation*}
		C+C \subseteq C, \qquad \R_{+} C = C, \qquad C \cap -C = \{ 0 \}.
	\end{equation*}
	Following \cite[Sec.~4]{rugh10} (or extrapolating directly from \cite{birkhoff57} or \cite[Chapter~XVI]{birkhoff_lattice}), we give Birkhoff's definition of the Hilbert projective distance.
	\begin{defn}[Hilbert projective pseudo-metric]
		For $x, y \in C \setminus \{0\}$, where $C$ is a proper closed convex cone in a real LCS, let $\beta(x,y) \in (0, \infty]$ be given by
		\begin{equation*}
			\beta(x,y) = \inf \{ r > 0 \, : \, r x - y \in C \} = \sup \{ r > 0 \, : \, r x - y \notin C \}.
		\end{equation*}
		Then the Hilbert projective distance is defined by
		\begin{equation}\label{eq:hilbert_beta}
			\hilbert (x, y)  = \log \big( \beta(x,y) \beta (y,x) \big) \in [0, \infty], \qquad \forall x,y \in C \setminus \{0\}.
		\end{equation}
	\end{defn}
	It is worth spending a few moments to properly understand these definitions. Since $C$ is closed, and $-y \notin C$ for all $y \in C \setminus \{0\}$, we have $\beta(x,y)>0$. It is then straightforward to see that $\beta(x,y) \beta(y,x)\ge 1$, by noting that
	\begin{align}
		\frac{1}{\beta(x,y)}
		&= \inf \{ 1/r > 0 \, : \, r x - y \notin C \} 
		= \inf \{ r > 0 \, : \, x - ry \notin C \} 
		= \inf \{ r > 0 \, : \, ry -x \notin - C \}  \nonumber \\
		&\le \inf \{ r > 0 \, : \, ry -x \in C \} = \beta(y,x), \label{eq:ineq_beta_beta_inv}
	\end{align}
	and $\beta(x,y) = \frac{1}{\beta(y,x)}$ if and only if $y = cx$ for some $c \in \R_{+}$ (in this case, $x$ and $y$ are said to be \textit{collinear}). Hence \eqref{eq:hilbert_beta} is well-defined with $\hilbert (x, y)=0$ if and only if $x$ and $y$ are collinear. Symmetry of $\hilbert$ is clear from the definition, and one can verify that the triangle inequality is also satisfied (see Remark \ref{rmk:triangle}): then, $\hilbert$ is a pseudo-metric for $C$ (see also \cite[Chapter~XVI]{birkhoff_lattice}).
	
	\begin{exmp}
		The finite non-negative measures on $[0,1]$ form a proper convex (non-negative) cone in the space of the signed measures on $[0,1]$. Since the signed measures on $[0,1]$ equipped with the total variation distance form a Banach space (and therefore an LCS), the definition above applies. The Hilbert pseudo-metric can then be restricted to the probability measures on $[0,1]$, to give a true metric. We will explore in detail the Hilbert distance on the space of measures in Section~\ref{sec:hilbert_metric_measures}.
	\end{exmp}
	
	\begin{rmk}
		The only role of the specific choice of LCS topology on $X$, in defining the $\hilbert$-metric, is to have the notion of $C$ being closed in $X$. Two different topologies on $X$ for which $C$ is closed in $X$ will both give rise to the same Hilbert projective metric. In this sense, the Hilbert metric on $C$ is independent of the topology on the ambient space $X$.
	\end{rmk}
	
	By closure of $C$ in $X$, if $\beta(x,y) < \infty$, we must have that $\beta(x,y) x - y \in \partial C$, where $\partial C$ is the boundary of $C$. If $x \in \partial C$ and $y \in \mathring{C}$, where $\mathring{C}$ denotes the interior of $C$, then $\beta(x,y) = \infty$. However, if both $x,y \in \partial C$, then $\beta(x,y)$ might be finite. Using Birkhoff's choice of terminology \cite[Chapter XVI]{birkhoff_lattice}, $x,y \in C$ are \textit{comparable} if $\beta(x,y)$ and $\beta(y,x)$ are both finite. We observe that two boundary elements $x,y \in \partial C$ might still be comparable.
	
	Especially when $X$ is infinite dimensional, it can be useful to understand the Hilbert projective distance through duality (see \cite{rugh10,dubois09}, where this is exploited in the analysis of complex cones). Let $X^{\ast}$ be the (topological) dual of $X$, and let $\langle \cdot , \cdot  \rangle$ be the natural bilinear form $X^{\ast} \times X \to \R$. One can define the dual cone $C^{\ast}$ as
	\begin{equation}\label{eq:dual_cone}
		C^{\ast} = \big\{ f \in X^{\ast} \, : \, f |_{C} \ge 0   \big\}.
	\end{equation}
	\begin{prop}
		An equivalent definition of the Hilbert pseudo-metric is given by
		\begin{equation}\label{eq:hilbert_dual}
			\hilbert (x,y) = \sup_{\substack{f,g \in C^{\ast} \\\langle f,x \rangle, \langle g,y \rangle \neq 0} } \bigg\{ \log \frac{\langle f,y \rangle \langle g,x \rangle}{\langle f,x \rangle \langle g,y \rangle} \bigg\}.
		\end{equation}
	\end{prop}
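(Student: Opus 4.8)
The plan is to establish first the one-sided dual identity
\begin{equation*}
\beta(x,y) = \sup_{\substack{f \in C^{\ast} \\ \langle f,x\rangle \neq 0}} \frac{\langle f,y\rangle}{\langle f,x\rangle}
\end{equation*}
for every ordered pair $x,y \in C \setminus \{0\}$, and then to assemble \eqref{eq:hilbert_dual} from it. Note that for $f \in C^{\ast}$ and $x \in C$ one has $\langle f,x\rangle \ge 0$, so the constraint $\langle f,x\rangle \neq 0$ is the same as $\langle f,x\rangle > 0$, and (since also $\langle f,y\rangle \ge 0$) every ratio above is nonnegative. For the easy inequality $\beta(x,y) \ge \sup_f (\cdots)$, I would fix $f \in C^{\ast}$ with $\langle f,x\rangle > 0$ and any $r>0$ with $rx-y \in C$; applying $f$ gives $r\langle f,x\rangle - \langle f,y\rangle = \langle f, rx-y\rangle \ge 0$, hence $r \ge \langle f,y\rangle/\langle f,x\rangle$. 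Taking the infimum over admissible $r$ and then the supremum over $f$ yields the bound (vacuously valid when $\beta(x,y) = \infty$).

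The crux is the reverse inequality. I would take $r < \beta(x,y)$, so that $z := rx-y \notin C$. As $C$ is a closed convex set in an LCS, the Hahn--Banach separation theorem supplies $f \in X^{\ast}$ strictly separating $z$ from $C$: there is $\alpha$ with $\langle f,z\rangle < \alpha \le \langle f,c\rangle$ for all $c \in C$. Because $C$ is a cone containing $0$, this forces $\alpha \le 0$ and $\langle f,c\rangle \ge 0$ for all $c$ (rescaling $c$), i.e.\ $f \in C^{\ast}$, together with $\langle f,z\rangle < 0$, that is $r\langle f,x\rangle < \langle f,y\rangle$. When $\langle f,x\rangle > 0$ this gives $r < \langle f,y\rangle/\langle f,x\rangle \le \sup_f(\cdots)$, and letting $r \uparrow \beta(x,y)$ closes the inequality.

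The delicate point is the degenerate case $\langle f,x\rangle = 0$, which I expect to be the main obstacle. Then $\langle f,y\rangle > 0$, and applying $f$ to $rx-y$ shows $rx-y \notin C$ for all $r$, so $\beta(x,y) = \infty$; I must then also check that the dual supremum equals $+\infty$. For this I would use that, since $x \in C\setminus\{0\}$ and $C$ is a proper closed cone, the bipolar relation $C = C^{\ast\ast}$ guarantees some $f_1 \in C^{\ast}$ with $\langle f_1,x\rangle > 0$ (otherwise $x$ and $-x$ would both lie in $C^{\ast\ast}=C$, contradicting $C \cap -C = \{0\}$). The family $f_t = f + t f_1 \in C^{\ast}$ then satisfies $\langle f_t,x\rangle = t\langle f_1,x\rangle \to 0^+$ while $\langle f_t,y\rangle \ge \langle f,y\rangle > 0$, so the ratio diverges and the supremum is indeed $+\infty$, matching $\beta(x,y) = \infty$.

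Finally I would apply the identity to both $(x,y)$ and $(y,x)$ and multiply. Since the two suprema range over \emph{independent} variables $f$ and $g$ and all quantities are nonnegative, the product of the suprema equals the supremum of the product (with the usual conventions when a factor is $+\infty$, which corresponds to $\hilbert(x,y) = \infty$). Taking logarithms then converts $\log\big(\beta(x,y)\beta(y,x)\big)$ into the right-hand side of \eqref{eq:hilbert_dual}, completing the proof. Apart from the separation step and the degenerate case above, everything reduces to routine bookkeeping with nonnegative ratios and suprema.
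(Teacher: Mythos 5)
Your proposal is correct and follows essentially the same route as the paper: Hahn--Banach separation of a point from the closed convex cone to show $C$ coincides with the set $\{x:\langle f,x\rangle\ge 0\ \forall f\in C^{\ast}\}$, and then the translation of $\beta(x,y)$ into a dual supremum of ratios. Your explicit treatment of the degenerate case $\langle f,x\rangle=0$ (via the perturbation $f+tf_1$) is a welcome extra check that the paper's chain of equalities leaves implicit.
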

	\begin{proof}
		Let
		$
		\tilde C := \big\{ x \in X \, : \, \langle f, x \rangle \ge 0, \, \, \forall f \in C^{\ast} \big\}.
		$
		One can confirm that $\tilde C$ is a proper closed convex cone. We clearly have that $C \subseteq \tilde C$. Now consider $x \notin C$. Since $C$ is convex and closed, by the Geometric Hahn--Banach theorem (see e.g.\cite[Thm.~IV.3.9~\&~Cor.~IV.3.10]{conway_functional_analysis}) there exists a continuous linear functional $g \in X^{\ast}$ and an $\alpha \in \R$ such that $\langle g, x \rangle < \alpha$ and $\langle g, y \rangle \ge \alpha$ for all $y \in C$. Since the image of the cone $C$ under the functional $g$ can only be one of $\R_{+}$, $\R_{-}$, $\R$ or $\{0\}$, we must have $\alpha = 0$. Hence $g \in C^{\ast}$ but $\langle g, x \rangle < 0$, which implies that $x \notin \tilde C$. Therefore $\tilde C \subseteq C$ also, and so $C = \tilde C$.
		
		Take $x,y \in C$ and let $r \in \R_{+}$ with $rx - y \notin C$. Since $C = \tilde C$, there exists some $f \in C^{\ast}$ such that $\langle f, rx - y \rangle <0$. Consequently, 
		\begin{align}
			\beta(x,y) &= \sup \{ r > 0 \, : \, r x - y \notin C \}
			= \sup \{ r > 0 \, : \, \langle f, rx - y \rangle <0  \text{ for some } f \in C^{\ast} \} \nonumber \\
			&= \sup \{ r > 0 \, : \, r\langle f, x \rangle < \langle f, y \rangle  \text{ for some } f \in C^{\ast} \} 
			= \sup \bigg\{ \frac{\langle f, y \rangle}{\langle f, x \rangle} \, : \,  f \in C^{\ast}, \, \langle f, x \rangle  \neq 0\bigg\}, \label{eq:dual_beta_equiv}
		\end{align}
		and similarly for $\beta(y,x)$. Then \eqref{eq:hilbert_dual} is indeed equivalent to \eqref{eq:hilbert_beta}.
	\end{proof}
	
	\begin{rmk}\label{rmk:triangle}
		Given $ \log \frac{\langle f,y \rangle \langle g,x \rangle}{\langle f,x \rangle \langle g,y \rangle} =  \log \frac{\langle f,z \rangle \langle h,x \rangle}{\langle f,x \rangle \langle h,z \rangle}  +  \log \frac{\langle h,z \rangle \langle g,x \rangle}{\langle h,x \rangle \langle g,z \rangle} $, for any $f,g,h\in C^*$, it is easy to verify the triangle inequality for $\hilbert$ using the representation \eqref{eq:hilbert_dual}.
	\end{rmk}

	\begin{rmk}
		Once more, since the topology on $X$ does not change the Hilbert metric on $C$, when working with \eqref{eq:hilbert_dual} one can choose the topology, and therefore the dual space, cleverly: a coarser topology, with a smaller corresponding dual space, will almost always be preferable.
	\end{rmk}
	
	\subsection{Contraction properties}\label{sec:contraction}
	
	Positive linear operators on a positive closed convex cone contract in the Hilbert projective distance: this result is again due to Birkhoff \cite{birkhoff57}. More generally, for proper closed convex cones $C \subset X$, Birkhoff's contraction theorem can be stated as follows:
	
	\begin{thm}[Birkhoff]\label{thm:birkhoff}
		Let $X$ be a LCS, take $L \, : \, X \to X$ to be a linear transformation, and suppose that $L(C\setminus\{0\}) \subseteq C\setminus\{0\}$. If the diameter $\Delta(L) = \sup_{x,y \in C\setminus\{0\}} \hilbert (Lx, Ly)$ is finite, then we have
		\begin{equation}
			\hilbert (Lx, Ly) \le \tau(L) \hilbert (x,y), \qquad \forall x,y \in C \setminus \{0\},
		\end{equation}
		and $\tau(L) = \tanh \Big( \frac{\Delta(L)}{4} \Big)$ is called the Birkhoff contraction coefficient.
	\end{thm}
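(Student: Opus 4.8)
The plan is to work entirely through the dual representation \eqref{eq:hilbert_dual}, which turns the statement into an elementary scalar optimisation. First I would dispose of the degenerate cases: if $\hilbert(x,y)=0$ then $x,y$ are collinear, hence so are $Lx,Ly$ and both sides vanish; if $\hilbert(x,y)=\infty$ the inequality is automatic, since $\hilbert(Lx,Ly)\le\Delta(L)<\infty$ by hypothesis. So I may assume $0<\hilbert(x,y)<\infty$, which forces $M:=\beta(x,y)$ and $m:=1/\beta(y,x)$ to be finite with $0<m<M$ and $\hilbert(x,y)=\log(M/m)$. By closedness of $C$ the sandwich relations $Mx-y\in C$ and $y-mx\in C$ hold, and both elements are nonzero (otherwise $x,y$ would be collinear).

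The key step is to feed these relations through $L$. Setting $p:=L(y-mx)=Ly-mLx$ and $q:=L(Mx-y)=MLx-Ly$, positivity of $L$ gives $p,q\in C\setminus\{0\}$, so $\hilbert(p,q)\le\Delta(L)$. Now I would expand $\hilbert(Lx,Ly)$ using \eqref{eq:hilbert_dual}: for $f,g\in C^{\ast}$ write $\phi:=\langle f,Ly\rangle/\langle f,Lx\rangle$ and $\gamma:=\langle g,Ly\rangle/\langle g,Lx\rangle$, so that the corresponding term equals $\log(\phi/\gamma)$. Testing $p,q\in C$ against $f\in C^{\ast}$ gives $\langle f,p\rangle=(\phi-m)\langle f,Lx\rangle\ge 0$ and $\langle f,q\rangle=(M-\phi)\langle f,Lx\rangle\ge 0$, whence $\phi\in[m,M]$, and likewise $\gamma\in[m,M]$. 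Crucially, substituting these identities into the dual inequality $\langle f,q\rangle\langle g,p\rangle\le e^{\hilbert(p,q)}\langle f,p\rangle\langle g,q\rangle$, which is \eqref{eq:hilbert_dual} applied to the pair $(p,q)$, cancels the factors $\langle f,Lx\rangle,\langle g,Lx\rangle$ and yields the single scalar constraint $\frac{(M-\phi)(\gamma-m)}{(\phi-m)(M-\gamma)}\le N$, where $N:=e^{\Delta(L)}$.

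It then remains to maximise $\log(\phi/\gamma)$ over $\phi,\gamma\in[m,M]$ subject to this constraint. I would linearise it with the Möbius coordinates $\xi=(\phi-m)/(M-\phi)$ and $\eta=(\gamma-m)/(M-\gamma)$, under which the constraint reads $\xi\le N\eta$ while the objective becomes a rational function of $(\xi,\eta)$; since it is increasing in $\xi$, the maximum lies on $\xi=N\eta$, leaving a one-variable problem whose interior critical point $\eta^{\ast}=1/\sqrt{Nt}$ (with $t:=M/m$) I can locate explicitly. Evaluating there gives $\phi/\gamma\le\big((\sqrt{Nt}+1)/(\sqrt{t}+\sqrt{N})\big)^{2}$, hence $\hilbert(Lx,Ly)\le 2\log\frac{\sqrt{Nt}+1}{\sqrt{t}+\sqrt{N}}$. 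The proof closes with the one-variable inequality $2\log\frac{\sqrt{Nt}+1}{\sqrt{t}+\sqrt{N}}\le\frac{\sqrt N-1}{\sqrt N+1}\log t=\tanh\!\big(\tfrac{\Delta(L)}{4}\big)\,\hilbert(x,y)$: writing $c=\sqrt N$ and $w=\sqrt t$, both sides vanish at $w=1$, and comparing derivatives in $w$ reduces the claim to $c(w-1)^{2}\ge 0$, with equality only at $w=1$ (which also confirms the rate is attained in the limit, so sharp).

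Main obstacle: I expect the delicate part to be the bookkeeping that converts the diameter bound into the clean scalar constraint on $(\phi,\gamma)$ — in particular checking, via the sandwich, that $\langle g,Ly\rangle\neq 0$ already forces $\langle g,Lx\rangle\neq 0$ so that the dual suprema are taken over the correct sets, and discarding the boundary cases $\phi=m$ or $\gamma=M$ which contribute nothing to a nonnegative supremum. The second subtle point is recognising that the constrained maximum is genuinely interior: the corner $\xi\to\infty,\ \eta\to 0$ recovers only the non-expansive bound $\log(M/m)$, so the whole contraction effect comes from the diameter constraint $\xi\le N\eta$ cutting off that corner.
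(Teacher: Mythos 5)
Your argument is correct, and it reaches the theorem by a genuinely different route from the paper. The paper never proves Theorem~\ref{thm:birkhoff} directly: it proves the stronger statement $\mathcal{T}(Lx,Ly)\le\tau(L)\,\mathcal{T}(x,y)$ (Theorem~\ref{thm:tanh_contaction}) by a primal argument --- set inclusions $E_C(Lx,Ly)\subset E_C(x,y)$, outer approximations $M>\beta(x,y)$, $m>\beta(y,x)$, the M\"obius maps $h_1,h_2$, and a $\sinh$ identity --- and then observes that concavity and monotonicity of $\tanh$ yield Birkhoff's theorem as a corollary. You share the paper's one essential idea, namely pushing the extremal sandwich elements $Mx-y$ and $y-mx$ through $L$ and applying the diameter bound to their images $p,q$, but you extract the scalar inequality dually, via the pairings $\phi=\langle f,Ly\rangle/\langle f,Lx\rangle$ and $\gamma=\langle g,Ly\rangle/\langle g,Lx\rangle$, and then solve a clean two-variable constrained optimisation; your closing inequality $2\log\frac{\sqrt{Nt}+1}{\sqrt t+\sqrt N}\le\frac{\sqrt N-1}{\sqrt N+1}\log t$ (which I checked reduces to $c(w-1)^2\ge0$ after differentiating in $w=\sqrt t$) replaces the paper's $\sinh$ manipulation. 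What each approach buys: the paper's route delivers the stronger $\mathcal{T}$-contraction, which is meaningful even when $\hilbert(x,y)=\infty$, whereas yours proves only the $\hilbert$-statement; on the other hand your dual argument avoids the approximation step entirely (your $M,m$ are the exact extremal constants, and closedness of $C$ plus the dual pairing handle the boundary), and the optimisation is more transparent. One slip to fix: in your second paragraph the constraint is written as $\frac{(M-\phi)(\gamma-m)}{(\phi-m)(M-\gamma)}\le N$, which in your M\"obius coordinates reads $\eta\le N\xi$, not $\xi\le N\eta$; the constraint you actually need (and use) is the transposed one, $\frac{(\phi-m)(M-\gamma)}{(M-\phi)(\gamma-m)}\le N$, obtained by swapping $f$ and $g$ in the dual bound for $\hilbert(p,q)$ --- it is equally valid, and the degenerate cases $\phi=M$ or $\gamma=m$ it would exclude cannot occur, since either would force $\beta(q,p)=\infty$ and hence $\hilbert(p,q)=\infty$, contradicting $\Delta(L)<\infty$.
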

	
	The theorem holds equivalently if one drops finiteness of $\Delta(L)$ as a condition and extends the definition of the contraction coefficient to $\tau(L) = 1$ when $\Delta(L) = \infty$. In other words, any bounded linear operator $L$ is non-expansive in $C$ under the Hilbert distance, but it is \textit{strictly contracting} if and only if the diameter $\Delta(L)$ of $C$ under $L$ in the Hilbert metric is finite, i.e. $\tau(L) < 1$.
	
	We will now show that a stronger result than Theorem~\ref{thm:birkhoff} is possible. 
	\begin{defn}[$\mathcal{T}$-distance]\label{def:t-distance}
		For $x, y \in C \setminus \{0\}$, where $C$ is a proper closed convex cone in a real LCS, we define the \textit{hyperbolic tangent of the Hilbert pseudo-metric} as
		\begin{equation}\label{eq:T_distance}
			\mathcal{T} (x,y) := \tanh \Big(  \frac{\hilbert (x,y)}{4}  \Big),
		\end{equation}
		where $\tanh(\infty) : = 1$. For simplicity we refer to \eqref{eq:T_distance} as the \textit{$\mathcal{T}$-distance}.
	\end{defn}
	Note that the $\mathcal{T}$-distance is a pseudo-metric for $C$: one can easily check that the triangle inequality and symmetry properties are inherited from the Hilbert distance. However, the metric $\mathcal{T}$ makes the cone $C$ into a \emph{bounded} space, while $\hilbert$ gives an infinite distance between any points in $\mathring{C}$ and $\partial C$.  Borrowing ideas from the proof of \cite[Thm.~2.3]{dubois09}, we obtain the following theorem.
	\begin{thm}\label{thm:tanh_contaction}
		Let $X$ be a LCS, and $L \, : \, X \to X$ a linear transformation. Suppose that $L(C\setminus\{0\}) \subseteq C\setminus\{0\}$. Then we have
		\begin{equation}\label{eq:tanh_contraction}
			\mathcal{T} (Lx, Ly) \le \tau(L) \mathcal{T} (x,y), \qquad \forall x,y \in C\setminus \{0\},
		\end{equation}
		where $\tau(L) = \sup_{x,y \in C\setminus \{0\}} \mathcal{T}(Lx, Ly)$ is the diameter of $C$ under $L$ in the $\mathcal{T}$-distance, and is equal to the Birkhoff contraction coefficient.
	\end{thm}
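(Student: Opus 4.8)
The plan is to dispose of the easy parts, reduce the contraction to a configuration governed by just two cone elements, and then collapse everything to a one-variable maximisation.

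First I would settle the identification of the coefficient and the degenerate cases. Since $\tanh$ is increasing and continuous, $\sup_{x,y\in C\setminus\{0\}}\mathcal{T}(Lx,Ly)=\tanh\!\big(\tfrac14\sup_{x,y}\hilbert(Lx,Ly)\big)=\tanh(\Delta(L)/4)$, which is exactly Birkhoff's coefficient from Theorem~\ref{thm:birkhoff}. For the inequality itself, if $\hilbert(x,y)=0$ then $x,y$ are collinear, hence so are $Lx,Ly$ and both sides vanish; if $\hilbert(x,y)=\infty$ then $\mathcal{T}(x,y)=1$ and the claim reduces to $\mathcal{T}(Lx,Ly)\le\tau(L)$, true by the definition of $\tau(L)$ as a supremum; and if $\Delta(L)=\infty$ then $\tau(L)=1$ and only non-expansiveness is needed, which follows because the adjoint satisfies $L^{\ast}(C^{\ast})\subseteq C^{\ast}$, so $\beta(Lx,Ly)=\sup_{f\in C^{\ast}}\langle L^{\ast}f,y\rangle/\langle L^{\ast}f,x\rangle\le\beta(x,y)$ by \eqref{eq:dual_beta_equiv}. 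Hence I may assume $0<\hilbert(x,y)<\infty$ and $\Delta(L)<\infty$.

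Next comes the key reduction. Write $M=\beta(x,y)$ and $m=1/\beta(y,x)$, so $M/m=e^{\hilbert(x,y)}$. Finiteness of $\Delta(L)$ forces $\beta(x,y)<\infty$, so $x_0:=Mx-y$ and $y_0:=y-mx$ lie in $\partial C\setminus\{0\}$, and these two relations rearrange to the projective identities $x\sim x_0+y_0$ and $y\sim m\,x_0+M\,y_0$. Applying $L$ and setting $X_0:=Lx_0$, $Y_0:=Ly_0\in C\setminus\{0\}$, the task becomes to estimate $\hilbert(X_0+Y_0,\;mX_0+MY_0)$, where crucially $\hilbert(X_0,Y_0)\le\Delta(L)$. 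Now I pass to the dual: for $f\in C^{\ast}$ put $A=\langle f,X_0\rangle$, $B=\langle f,Y_0\rangle\ge0$ and $t=B/A$, so that $\langle f,mX_0+MY_0\rangle/\langle f,X_0+Y_0\rangle=(m+Mt)/(1+t)$ is increasing in $t$ and its reciprocal decreasing. Optimising over $f$ via \eqref{eq:dual_beta_equiv}, with $t$ ranging in $[\rho,R]$ where $R=\beta(X_0,Y_0)$, $\rho=1/\beta(Y_0,X_0)$ and $R/\rho=e^{\hilbert(X_0,Y_0)}=:\nu\le e^{\Delta(L)}$, gives the closed form
\[
  e^{\hilbert(Lx,Ly)}=\beta(Lx,Ly)\,\beta(Ly,Lx)=\frac{(1+\mu R)(1+\rho)}{(1+R)(1+\mu\rho)}=:P,\qquad \mu:=M/m.
\]
It is here that $\Delta(L)<\infty$ is used: it guarantees $\rho>0$ and $R<\infty$, keeping $t$ in a compact interval. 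Since $\mathcal{T}(u,v)=(\sqrt{e^{\hilbert(u,v)}}-1)/(\sqrt{e^{\hilbert(u,v)}}+1)$ is increasing in $e^{\hilbert}$, it remains to bound $P$.

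The only genuine work, and the main obstacle, is the scalar maximisation: I would show $P\le\big((1+\sqrt{\mu\nu})/(\sqrt\mu+\sqrt\nu)\big)^2$ by fixing the ratio $\nu=R/\rho$ and maximising $P$ over $\rho>0$ along $R=\nu\rho$. As $P\to1$ when $\rho\to0$ and $\rho\to\infty$, the maximum is interior, and a short logarithmic-derivative computation locates it at $\rho=(\mu\nu)^{-1/2}$, where $\sqrt P=(1+\sqrt{\mu\nu})/(\sqrt\mu+\sqrt\nu)$. The pay-off is that the algebra then factorises exactly: using $(\sqrt\mu-1)(\sqrt\nu-1)=1+\sqrt{\mu\nu}-\sqrt\mu-\sqrt\nu$ and its companion with $+$ signs,
\[
  \frac{\sqrt P-1}{\sqrt P+1}=\frac{(\sqrt\mu-1)(\sqrt\nu-1)}{(\sqrt\mu+1)(\sqrt\nu+1)}=\tanh\!\Big(\tfrac14\log\nu\Big)\,\mathcal{T}(x,y),
\]
so that $\mathcal{T}(Lx,Ly)\le\tanh(\tfrac14\log\nu)\,\mathcal{T}(x,y)\le\tanh(\Delta(L)/4)\,\mathcal{T}(x,y)=\tau(L)\,\mathcal{T}(x,y)$, the last step by $\nu\le e^{\Delta(L)}$ and monotonicity of $\tanh$. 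The delicate point is precisely verifying that the interior critical point is the global maximum and that the worst case over $(R,\rho)$ at fixed ratio $\nu$ controls the actual value; the fact that equality holds exactly at $\rho=(\mu\nu)^{-1/2}$ is reassuring, as it confirms $\tau(L)$ is the sharp constant consistent with its reading as the $\mathcal{T}$-diameter.
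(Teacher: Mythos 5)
Your proof is correct, and while it rests on the same structural reduction as the paper's argument --- pass to the two boundary elements $x_0=\beta(x,y)x-y$ and $y_0=y-x/\beta(y,x)$, apply the diameter bound to their images, and finish with a one-variable optimisation whose optimum factorises as a product of hyperbolic tangents --- the way you connect the pieces is genuinely different. The paper works with outer/inner approximations $M>\beta(x,y)$, $m>\beta(y,x)$, $M'<\beta(Lx,Ly)$, $m'<\beta(Ly,Lx)$, tracks the sets $E_C(L(Mx-y),L(my-x))$ through the M\"obius maps $h_1,h_2$ to extract the inequality $h_1^{-1}(M')h_2^{-1}(m')\le e^{\Delta(L)}$, optimises over $M'$ at fixed $d=\log(M'm')$, and then takes limits. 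You instead invert the decomposition ($x\sim x_0+y_0$, $y\sim mx_0+My_0$), use the dual formula \eqref{eq:dual_beta_equiv} to compute $e^{\hilbert(Lx,Ly)}=\frac{(1+\mu R)(1+\rho)}{(1+R)(1+\mu\rho)}$ \emph{exactly} as a function of $R=\beta(X_0,Y_0)$, $\rho=1/\beta(Y_0,X_0)$ and $\mu=e^{\hilbert(x,y)}$, and then maximise over $\rho$ at fixed $\nu=R/\rho$. This buys an argument with no $\varepsilon$-approximations and no limit at the end, and it makes transparent exactly where the inequality is lost (only in replacing the actual $(\rho,R)$ by the worst pair with the same ratio), hence why $\tau(L)$ is sharp; I checked your critical-point claim, and indeed the logarithmic derivative of $P$ has numerator proportional to $(\mu-1)(\nu-1)(1-\mu\nu\rho^2)$, so $\rho=(\mu\nu)^{-1/2}$ is the unique interior critical point and a global maximum since $P\to1$ at both ends.

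Two minor points to tidy. First, in the case $\Delta(L)=\infty$ you invoke the adjoint $L^{\ast}$, but the theorem only assumes $L$ is linear, not continuous, so $f\circ L$ need not lie in $X^{\ast}$; non-expansiveness is better obtained primally, as in the paper, from $rx-y\in C\Rightarrow rLx-Ly\in C$, i.e.\ $E_C(Lx,Ly)\subseteq E_C(x,y)$. (Your use of duality in the main computation is unaffected, since there \eqref{eq:dual_beta_equiv} is applied to fixed cone elements, not to $L$.) Second, it is the assumed finiteness of $\hilbert(x,y)$, not of $\Delta(L)$, that gives $\beta(x,y)<\infty$; the role of $\Delta(L)<\infty$ is, as you correctly use later, to give $0<\rho\le R<\infty$ for the images $X_0,Y_0$.
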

	
	\begin{proof}
		For all $x,y \in C \setminus \{0\}$, define the set
		\begin{equation*}
			E_{C}(x,y) := \{  r>0 \, : \, rx - y \notin C \}.
		\end{equation*}
		Using the same notation as in \eqref{eq:hilbert_beta}, we have $\hilbert(x,y) = \log (\beta(x,y) \beta(y,x))$, where
		\begin{equation*}
			\beta(x,y) := \sup E_{C}(x,y) \in (0, \infty], \qquad
			\beta(y,x) := \sup E_{C}(y,x) \in (0, \infty].
		\end{equation*}
		
		Fix $x,y \in C \setminus \{0\}$. If $Lx$ and $Ly$ are collinear, then $\mathcal{T} (Lx, Ly) = 0$ and the claim holds trivially. Similarly, if $\hilbert (x,y) = \infty$, then $\mathcal{T}(x,y) = 1$; as $\mathcal{T} (Lx, Ly)$ is certainly less than its supremum over $x$ and $y$, the claim holds. It remains to consider the case $\hilbert (x,y) < \infty$ and $Lx$ and $Ly$ not collinear (so $\hilbert (Lx, Ly) \neq 0$).
		
		For $r >0$, consider $rx - y \in C$. By linearity of $L$, we also have $rLx - Ly \in C$, and in particular $E_{C}(Lx,Ly) \subset E_{C}(x,y)$. This implies that
		\begin{equation*}
			\beta(x,y) \ge \beta (Lx, Ly) > \frac{1}{\beta(Ly,Lx)} \ge \frac{1}{\beta(y,x)},
		\end{equation*}
		where the strict inequality in the middle is due to \eqref{eq:ineq_beta_beta_inv} and the assumption that $\hilbert(Lx, Ly) \neq 0$. We now approximate $\beta(x,y)$ and $\beta(y,x)$ from above (since $\hilbert(x,y) < \infty$, also $\beta(x,y), \beta(y,x) < \infty$), and $\beta(Lx,Ly)$ and $\beta(Ly, Lx)$ from below, i.e. take $M, m > 0$ and $M',  m' >0$ such that
		\begin{equation*}
			M > \beta(x,y), \quad m > \beta(y,x), \quad  \frac{1}{\beta(Ly, Lx)} < \Big\{ \frac{1}{m'}\, , M'\Big\} < \beta(Lx, Ly).
		\end{equation*}
		
		By definition of $E_C (Lx,Ly)$ and $E_C (Ly,Lx)$, we have $M'Lx-Ly \notin C$, and $m'Ly-Lx \notin C$. Similarly, $Mx - y \in C$ and $my-x \in C$. For $r > 0$, note that
		\begin{equation*}
			r L (Mx - y) - L(my-x) = (rM+1)Lx - (r+m)Ly \in C \Longleftrightarrow \frac{rM+1}{r+m}Lx - Ly \in C.
		\end{equation*}
		Letting $h_1 (r) = (rM+1)/(r+m)$, this implies in particular that
		\begin{align*}
			E_{C}(L (Mx - y), L(my-x))
			&= \Big\{ r > 0 \, : \,  h_1(r)Lx - Ly \notin C  \Big\} \\
			&= \Big\{ h_1^{-1}(w) > 0 \, : \,  w Lx - Ly \notin C  \Big\} \\
			&= h_1^{-1} \bigg( \Big\{ \frac{1}{m}< w<M \, : \,  w Lx - Ly \notin C  \Big\} \bigg) \subset h_1^{-1} \Big( E_{C}(Lx, Ly)    \Big),
		\end{align*}
		where $h_1^{-1}(r) = (rm - 1)/(M-r)$. Since $ M' \in \big( \frac{1}{\beta(Ly,Lx)} \, , \beta(Lx,Ly) \big) \subset (\frac{1}{m}, M)$ by assumption, and $M'Lx - Ly \notin C$, we have that $h_1^{-1}(M') \in E_{C}(L (Mx - y), L(my-x))$.
		
		Analogously, for $r > 0$,
		\begin{equation*}
			r L (my-x) - L(Mx-y)  \in C \Longleftrightarrow \frac{rm+1}{r+M}Ly - Lx \in C.
		\end{equation*}
		Letting $h_2 (r) = (rm+1)/(r+M)$ and $h_2^{-1}(r) = (rM - 1)/(m-r)$, this implies that
		\begin{align*}
			E_{C}(L (my - x), L(Mx-y))
			&= h_2^{-1} \bigg( \Big\{ \frac{1}{M}< w<m \, : \,  w Ly - Lx \notin C  \Big\} \bigg) \subset h_2^{-1} \Big( E_{C}(Ly, Lx)    \Big).
		\end{align*}
		Since $ \frac{1}{m'} \in (\frac{1}{m}, M)$ by assumption, which implies that $m' \in (\frac{1}{M} \, , m)$, and $m'Ly - Lx \notin C$, we have that $h_2^{-1}(m') \in E_{C}(L (my-x), L(Mx-y))$.
		
		We know that $\Delta(L) = \sup_{x,y \in C \setminus \{0\}} \hilbert (Lx, Ly)$ and $Mx-y, my-x \in C$ by definition of $\beta(x,y)$ and $\beta(y,x)$. Therefore,
		\begin{align*}
			h_1^{-1}(M') h_2^{-1}(m') 
			&\le \beta\big( L(Mx-y), L (my - x)\big) \beta\big(L (my - x), L(Mx-y)\big) \\
			&\le \sup_{\tilde x,\tilde y \in C \setminus \{0\}} \beta( L \tilde x, L \tilde y) \beta(L \tilde y, L \tilde x)
			= e^{\Delta(L)},
		\end{align*}
		which yields the inequality
		\begin{equation*}
			\frac{(M'm - 1)}{(M - M')} \frac{(m'M-1)}{(m - m')} \le e^{\Delta(L)}.
		\end{equation*}
		
		Now let $D =\log (M m)$ and $d =\log (M' m')$. Note that since $M' \in (\frac{1}{m}, M)$ and $m' \in (\frac{1}{M}, m)$, $d \le D$. Substituting $m' = \frac{e^d}{M'}$ in the above yields
		\begin{equation}\label{eq:f_M_prime_ineq}
			f(M') := \frac{(M'm - 1)}{(M - M')} \frac{(e^d M-M')}{(m M'- e^d)} \le e^{\Delta(L)}.
		\end{equation}
		Noting that $M' = \frac{e^d}{m'} \in (\frac{e^d}{m}, e^d M )$, intersecting this set with $(\frac{1}{m}, M)$ yields $M' \in (\frac{e^d}{m}, M)$.
		Differentiating the left-hand side of \eqref{eq:f_M_prime_ineq}, we find that the minimum of $f(M')$ within these constraints for $M'$ is attained at $M^{\prime \ast} = e^{d/2} \sqrt{\frac{M}{m}}$. Substituting into the expression above, we get
		\begin{align*}
			f(M^{\prime \ast}) 
			&= \frac{\Big( e^{d/2} \sqrt{Mm} - 1 \Big)}{\Big( 1 - e^{d/2}\frac{1}{\sqrt{Mm}} \Big)} \frac{\Big( e^d -e^{d/2} \frac{1}{\sqrt{Mm}} \Big)}{\Big( e^{d/2} \sqrt{Mm}- e^d \Big)} 
			= \frac{\sinh^2 \Big( \frac{D+d}{4} \Big)}{\sinh^2 \Big( \frac{D-d}{4} \Big)}.
		\end{align*}
		Taking square-roots yields
		\begin{equation*}
			\frac{\sinh\Big( \frac{D+d}{4} \Big)}{\sinh \Big( \frac{D-d}{4} \Big)} \le \sqrt{f(M')} \le e^{\Delta(L)/2}.
		\end{equation*}
		Using the identity $\sinh(a\pm b) = \sinh(a) \cosh(b) \pm \sinh(b) \cosh(a)$ and the fact that $\frac{x-1}{x+1}$ is increasing for $x>0$, we obtain the final expression
		\begin{equation*}
			\tanh \Big( \frac{d}{4}  \Big) \le \tanh \Big( \frac{\Delta(L)}{4}  \Big) \tanh \Big( \frac{D}{4}  \Big).
		\end{equation*}
		Taking limits as $M, m \rightarrow \beta(x,y), \beta(y,x)$ and $M', m'\rightarrow \beta(Lx,Ly), \beta(Ly,Lx)$, we are done.
	\end{proof}
	
	\begin{rmk}
		Birkhoff's Theorem~\ref{thm:birkhoff} is immediate from concavity and monotonicity of $\tanh(x)$ for $x \ge 0$. The advantage of using $\mathcal{T}$ instead of $\hilbert$ is negligible when the distances are small, since $\mathcal{T}(x,y)$ is equivalent to $\hilbert(x,y)$ asymptotically as $\hilbert(x,y)$ approaches 0. However, we can find points $x,y \in C$ such that $\hilbert(x,y) = \infty$, such as when comparing an element $x \in \mathring{C}$ with an element $y \in \partial C$. In these cases, the $\mathcal T$-distance is preferable, since $\mathcal{T}(x,y)$ stays finite and the inequality \eqref{eq:tanh_contraction} remains meaningful.
	\end{rmk}
	
	\begin{exmp}
		As we mentioned in the introduction, an immediate application of Birkhoff's theorem is in the ergodic theory of Markov processes, since transition operators are positive linear operators that map probability distributions to probability distributions, and so the assumptions of Theorem~\ref{thm:birkhoff} are satisfied. We discuss explicit forms of Birkhoff's contraction coefficient for stochastic matrices and a class of transition kernels in Section~\ref{sec:geometry}, and explore the ergodicity of Markov chains specifically in Appendix~\ref{app:markov}.
	\end{exmp}
	
	\section{$\hilbert$-metric on the space of probability measures}\label{sec:hilbert_metric_measures}
	
	From general LCS we now move to the space of probability measures, and consider the Hilbert projective distance in this context specifically. In \cite[Chapter~XVI]{birkhoff_lattice} Birkhoff works with positive cones in a Banach lattice. Since the probability measures are a subset of the positive measures, which form the positive cone in the space of signed measures, the definition of the Hilbert distance on probability measures can be easily deduced from Birkhoff's work (see e.g.~\cite[Eq.~9]{atar-zeitouni97} and \cite[Def.~3.3]{legland04}). In this section we choose to derive the Hilbert distance in the framework of duality instead, drawing a parallel with the works on convex cones \cite{dubois09,rugh10}. The purpose of this exercise is to gain an understanding of the Hilbert metric in terms of functions acting on probability measures, and to investigate how a change in the test-functions affects the distance itself. We note in passing that Eckstein \cite{eckstein24} gives a slightly different application of Hilbert metrics, to cones related to probability measures, in order to encode various integrability conditions.
	
	\begin{notation}
		For any $\sigma$-algebra $\mathcal{F}$ and space $F$, let $L^0(\mathcal{F}, F)$ denote the space of $\mathcal{F}$-measurable functions, valued in $F$, and let $B(\mathcal{F}, F)$ be the subspace of bounded $\mathcal{F}$-measurable functions. For any two spaces $E,F$, let $C_{b}(E,F)$ denote the space of bounded continuous functions $E \to F$. If $E$ and $F$ are metric spaces, let $C_{bL}(E,F)$ be the space of bounded $F$-valued Lipschitz functions. By $\| f \|_{\infty}$ we denote the $L^{\infty}$-norm of $f$ and by $\| f \|_{Lip}$ its Lipschitz coefficient.
	\end{notation}
	
	Let $(E, \mathcal{F})$ be a measurable space, and consider the space $\mathcal{M}(E)$ of finite signed measures on $(E, \mathcal{F} )$. A natural approach is to make $\mathcal{M}(E)$ into a Banach space by equipping it with the total variation norm $\| \cdot \|_{\tvnorm}$. The total variation norm is defined, as usual, by
	\begin{equation}\label{eq:total_var_hahn_jor}
		\| \mu\|_{\tvnorm} := |\mu|(E) = \mu^{+}(E) + \mu^{-}(E), \quad \text{for } \mu\in \mathcal{M}(E),
	\end{equation}
	where $\mu = \mu^{+} - \mu^{-}$ is the Hahn--Jordan decomposition of $\mu$. It can be expressed equivalently in terms of $\mu$ acting on elements of $B(\mathcal{F}, \R)$ as
	\begin{equation}\label{eq:total_var_norm_bm}
		\| \mu\|_{\tvnorm} := \sup \Big\{ \int_{E} f \di \mu \, : \, f \in B(\mathcal{F}, \R), \, \| f \|_{\infty} \le 1 \Big\}.
	\end{equation}
	Now, the subset of positive measures $\mathcal{M}_{+}(E)$ is a proper closed convex cone in $\mathcal{M}(E)$. The probability measures on $( E, \mathcal{F} )$, denoted by $\mathcal{P}(E)$, are a subset of $\mathcal{M}_{+}(E)$. In \eqref{eq:hilbert_dual}, following ideas from \cite{dubois09,rugh10}, we provided an equivalent definition of the Hilbert metric using duality. This is not a convenient approach when viewing $\mathcal{M}(E)$ as the Banach space $\big(\mathcal{M}(E), \| \cdot \|_{\tvnorm}\big)$: for one thing, when dealing with signed measures, one usually prefers to work with a \textit{predual} space instead of the dual.
	
	Taking the predual point of view, we could consider $\mathcal{M}(E)$ as a subset of $X=C_b(E, \R)^*$, which is a real Banach space under the operator norm. When $E$ is a Polish space with the Borel $\sigma$-algebra $\mathcal{B}(E)$, this amounts to a linear isometric embedding that is weak$^{\ast}$-dense. Equipping $X$ with the weak$^{\ast}$-topology, rather than the operator norm, we get that $X$ is a LCS and $X^{\ast}=C_b(E, \R)$, so one expects a predual characterisation of the cone of positive measures in terms of $C_b(E, \R)$. This, however, does not immediately follow from the procedure that led to \eqref{eq:hilbert_dual}. Instead, we give here a direct argument for the desired characterization \eqref{eq:pos_measure_cb_func}, where one can think of $C_b(E,\mathbb{R}_+)$ as the `predual cone', in analogy with \eqref{eq:dual_cone}. In fact, we can further restrict the space to bounded Lipschitz functions.
	\begin{prop}\label{prop:positive_measures_and_cbf}
		Let $E$ be a Polish space with Borel $\sigma$-algebra $\mathcal{B}(E)$ and let $C_{bL}(E,\mathbb{R}_+)$ be the space of bounded Lipschitz functions $E \to \R_+$. Then we can characterize the space of positive measures $\mathcal{M}_+(E)$ as
		\begin{equation}\label{eq:pos_measure_cb_func}
			\mathcal{M}_+(E) = \bigl\{ \mu \in \mathcal{M}(E): \langle \mu , f \rangle \geq 0, \; \: \forall f \in C_{bL}(E,\mathbb{R}_+) \bigr\}.	
		\end{equation}
		If $E$ is only metrizable, then replace $C_{bL}(E,\R_+)$ with $C_b(E,\R_+)$.
	\end{prop}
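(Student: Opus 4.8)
The inclusion $\subseteq$ is immediate: if $\mu$ is a positive measure then $\langle \mu , f \rangle = \int_E f \di \mu \ge 0$ for every nonnegative $f$, in particular for every $f \in C_{bL}(E,\R_+)$. The content lies in the reverse inclusion, so I would start by assuming that $\mu \in \mathcal{M}(E)$ satisfies $\langle \mu , f \rangle \ge 0$ for all $f \in C_{bL}(E,\R_+)$, and aim to show that the negative part $\mu^-$ in the Hahn--Jordan decomposition $\mu = \mu^+ - \mu^-$ vanishes. The plan is to proceed in two stages: first upgrade the hypothesis from Lipschitz test functions to indicators of closed sets, and then use the regularity of finite Borel measures to pass from closed sets to the whole negative set.

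For the first stage, I would fix a metric $d$ compatible with the Polish topology and, given a closed set $F \subseteq E$, define $f_n(x) := \max\{0,\, 1 - n\, d(x,F)\}$. Each $f_n$ lies in $C_{bL}(E,\R_+)$ (it is $n$-Lipschitz, takes values in $[0,1]$, and is nonnegative), and as $n \to \infty$ the sequence $f_n$ decreases pointwise to $\mathbf{1}_F$, since $d(x,F) = 0$ exactly when $x \in F$ (as $F$ is closed). Because $|f_n| \le 1$ and $|\mu|(E) < \infty$, dominated convergence gives $\langle \mu , f_n \rangle \to \mu(F)$; as each term is nonnegative by hypothesis, I would conclude $\mu(F) \ge 0$ for every closed set $F$.

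For the second stage, I would write $E = P \sqcup N$ for a Hahn decomposition, so that $\mu^+(\cdot) = \mu(\cdot \cap P)$ and $\mu^-(\cdot) = -\mu(\cdot \cap N)$. If $F \subseteq N$ is closed then $\mu^+(F) = 0$, whence $\mu(F) = -\mu^-(F)$; combined with $\mu(F) \ge 0$ from the first stage and $\mu^- \ge 0$, this forces $\mu^-(F) = 0$. The key input is then that any finite Borel measure on a metric space is inner regular with respect to closed sets, so that $\mu^-(N) = \sup\{ \mu^-(F) : F \subseteq N \text{ closed}\} = 0$. Since $\mu^-$ is carried by $N$, this yields $\mu^- \equiv 0$ and hence $\mu = \mu^+ \in \mathcal{M}_+(E)$, completing the argument.

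The main obstacle I expect is precisely this inner-regularity step: one must approximate the negative set $N$, which is merely Borel, from inside by closed sets on which the first-stage inequality applies, and one needs closed-inner-regularity of finite Borel measures to guarantee the supremum recovers $\mu^-(N)$. On a Polish space this is standard. For the final claim about the metrizable case, I would note that a compatible metric is still available, so the same $f_n$ work; alternatively, to argue with $C_b(E,\R_+)$ without fixing a metric, I would replace the explicit $f_n$ by continuous Urysohn-type functions decreasing to $\mathbf{1}_F$, which exist because metrizable spaces are normal, leaving the rest of the argument unchanged.
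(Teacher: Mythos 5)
Your proof is correct, and it takes a genuinely different route through the hard inclusion than the paper does. The paper argues by contraposition: starting from $\mu \notin \mathcal{M}_+(E)$, it approximates \emph{both} Hahn sets $P$ and $N$ from inside by closed (in the Polish case, compact) sets $A^+_\varepsilon, A^-_\varepsilon$ and then invokes a (Lipschitz) Urysohn lemma to build a single test function that is $0$ on $A^+_\varepsilon$ and $1$ on $A^-_\varepsilon$, forcing $\langle \mu, f\rangle < 0$. You instead upgrade the hypothesis directly: the explicit functions $f_n(x) = \max\{0,\, 1 - n\,d(x,F)\}$ decrease to $\mathbf{1}_F$ for any closed $F$, so $\mu(F) \ge 0$ on all closed sets, and then closed-inner-regularity of the finite Borel measure $\mu^-$ applied to $N$ alone gives $\mu^- \equiv 0$. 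Both arguments rest on Hahn--Jordan plus regularity of finite Borel measures on metric spaces, but your version buys two things: it avoids the two-set separation problem entirely (the paper needs \emph{compact} inner regularity in the Polish case precisely because two disjoint closed sets in a metric space can be at distance zero, so no Lipschitz separating function need exist), and as a consequence it actually proves the stronger statement that $C_{bL}(E,\R_+)$ already suffices for merely metrizable $E$ --- the paper's fallback to $C_b(E,\R_+)$ in that case is not needed under your construction, and since $C_{bL} \subseteq C_b$ the stated $C_b$ version follows a fortiori. The one step you flag as the main obstacle, inner regularity with respect to closed sets, is indeed standard for finite Borel measures on metric spaces (it is the same citation the paper uses, \cite[Thm.~7.1.7]{bogachev}), so there is no gap.
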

	\begin{proof}
		Let $M$ denote the right-hand side of \eqref{eq:pos_measure_cb_func}. We want to show that $\mathcal{M}_+(E) = M$. By non-negativity of the elements of $\mathcal{M}_+(E)$ and $C_{bL}(E,\mathbb{R}_+)$ (resp.~$C_{b}(E,\mathbb{R}_+)$), it is obvious that $\mathcal{M}_+(E) \subseteq M$. For the opposite inclusion, suppose $\mu \notin \mathcal{M}_+(E)$. By the Hahn--Jordan decomposition, there exist disjoint sets $P, N \subset E$ such that $P \cup N = E$, and (Borel) measures $\mu^+$ and $\mu^-$ such that $\mu^+$ is supported on $P$ and $\mu^-$ is supported on $N$. Since $\mu \notin \mathcal{M}_+$, we have $\mu^-(N)>0$. Recall that $\mu, \mu^+$ and $\mu^-$ are regular, as they are Borel measures on a metric space $E$ (see e.g~\cite[Thm.~7.1.7]{bogachev}). Take $0 < \varepsilon < \mu^-(N)/4$. By regularity of $\mu^-$, we can find a closed set $A^-_{\varepsilon} \subset N$ such that $\mu^-(N \setminus A^-_{\varepsilon}) < \varepsilon$. Likewise, there exists a closed set $A^+_{\varepsilon} \subset P$ such that $\mu^+(P \setminus A^+_{\varepsilon}) < \varepsilon$. Note that $A^+_{\varepsilon} \cap A^-_{\varepsilon} = \emptyset$, since they are respectively the subsets of disjoint sets $P$ and $N$. For $E$ Polish, we can take the sets $A^-_{\varepsilon},A^+_{\varepsilon}$ to be compact (again, \cite[Thm.~7.1.7]{bogachev}), so the Lipschitz version of Urysohn's Lemma \cite[Prop.~2.1.1]{cobzas_lipschitz} (resp.~Urysohn's Lemma \cite[Thm.~1.2.10]{cobzas_lipschitz}) guarantees that there exists $f \in C_{bL}(E, \R_+)$ (resp.~$C_{b}(E, \R_+)$) taking values in $[0,1]$ with
		\begin{equation*}
			f(x) = \left\{\begin{array}{lr}
				0 & \text{for } x  \in A^+_{\varepsilon},\\
				1 & \text{for } x  \in A^-_{\varepsilon}.
			\end{array} \right.
		\end{equation*}
		Integrating $f$ against $\mu$ we have
		\begin{align*}
			\int_E f \di \mu 
			= \int_E f \di \mu^+ - \int_E f \di \mu^- 
			\le \mu^+(P\setminus A^+_{\varepsilon}) - \mu^-(A^-_{\varepsilon})
			\le 2 \varepsilon - \mu^-(N)
			\le - \frac{\mu^-(N)}{2}
			< 0.
		\end{align*}
		Consequently, we have $f \in C_{bL}(E, \R_+)$ (resp.~$C_{b}(E, \R_+)$), but $\langle \mu, f \rangle <0$, so $\mu \notin M$. Therefore $M \subseteq \mathcal{M}_+(E)$, and the two sets are equal.
	\end{proof}
	
	When $E$ is a Polish space, the above proposition is all we need to define the Hilbert projective (pseudo-)metric on $\mathcal{M}_+(E)$ in terms of bounded positive Lipschitz functions in $C_{bL}(E,\mathbb{R}_+)$, analogously to \eqref{eq:hilbert_dual}. On the other hand, if we do not want to assume $E$ to be Polish, or even metric, we need to enlarge the set of test-functions for the construction of the Hilbert metric to still make sense. Similar to Proposition~\ref{prop:positive_measures_and_cbf}, we find the following (trivial) characterization of $\mathcal{M}_+(E)$ in terms of bounded positive measurable functions.
	
	\begin{prop}\label{prop:positive_measures_and_bmf}
		Let $(E, \mathcal{F})$ be a measurable space and let $B(\mathcal{F},\mathbb{R}_+)$ be the space of $\mathcal{F}$-measurable bounded functions taking values in $\R_+$. Then we have
		\begin{equation}\label{eq:pos_measure_bm_func}
			\mathcal{M}_+(E) = \bigl\{ \mu \in \mathcal{M}(E): \langle f, \mu \rangle \ge 0, \; \: \forall f \in B(\mathcal{F}, \R_{+}) \bigr\}.	
		\end{equation}
	\end{prop}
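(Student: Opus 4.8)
The plan is to prove the two set inclusions separately. The key observation, which makes this proposition essentially trivial in comparison to Proposition~\ref{prop:positive_measures_and_cbf}, is that we now have the entire space of bounded measurable functions at our disposal as test functions, so no continuous or Lipschitz approximation is required: the indicator of the Hahn--Jordan negative set will serve directly as a separating functional. Write $M$ for the right-hand side of \eqref{eq:pos_measure_bm_func}.

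First I would dispatch the inclusion $\mathcal{M}_+(E) \subseteq M$, which is immediate: for any positive measure $\mu$ and any $f \in B(\mathcal{F}, \R_+)$, positivity of both $f$ and $\mu$ gives $\langle f, \mu \rangle = \int_E f \di \mu \ge 0$. For the reverse inclusion I would argue by contraposition. Supposing $\mu \notin \mathcal{M}_+(E)$, I would invoke the Hahn--Jordan decomposition $\mu = \mu^+ - \mu^-$, with disjoint measurable sets $P, N \in \mathcal{F}$ satisfying $P \cup N = E$ and with $\mu^+$ supported on $P$, $\mu^-$ supported on $N$; since $\mu$ fails to be positive, $\mu^-(N) > 0$. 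The decisive point is that $\mathbf{1}_N$ is itself a bounded $\mathcal{F}$-measurable function valued in $\R_+$, hence $\mathbf{1}_N \in B(\mathcal{F}, \R_+)$; testing $\mu$ against it yields $\langle \mathbf{1}_N, \mu \rangle = \mu(N) = -\mu^-(N) < 0$, so $\mu \notin M$.

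I do not anticipate any genuine obstacle. The only ingredient is the existence of the Hahn--Jordan decomposition with a measurable negative set for a finite signed measure on an arbitrary measurable space, which is standard and needs no topological hypotheses. This is precisely the contrast with Proposition~\ref{prop:positive_measures_and_cbf}: there the separating function was required to be continuous, forcing one to approximate $P$ and $N$ from inside by closed (or compact) sets and to build the separating function via Urysohn's lemma, with the attendant $\varepsilon$-bookkeeping; here measurability of $N$ suffices, and $\mathbf{1}_N$ does the job with no approximation at all.
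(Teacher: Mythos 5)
Your proposal is correct and follows exactly the paper's own argument: the forward inclusion by non-negativity, and the reverse by testing $\mu \notin \mathcal{M}_+(E)$ against the indicator $\mathds{1}_N$ of the Hahn--Jordan negative set, giving $\langle \mathds{1}_N, \mu\rangle = -\mu^-(N) < 0$. Your remark contrasting this with the topological approximation needed in Proposition~\ref{prop:positive_measures_and_cbf} is also accurate.
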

	\begin{proof}
		Let $M'$ be the right-hand side of \eqref{eq:pos_measure_bm_func}. By non-negativity of the functions in $B(\mathcal{F}, \R_{+})$, $\mu \in \mathcal{M}_{+}(E)$ implies $\mu \in M'$. Conversely, assume $\mu \notin \mathcal{M}_{+}(E)$. Using the Hahn--Jordan decomposition, take $N \in \mathcal{F}$ such that $\mu(N) = - \mu^-(N) <0$. Let $ f : = \mathds{1}_{N} \in B(\mathcal{F}, \R_{+})$. Then $\langle f, \mu \rangle < 0$, but $\langle f, \nu \rangle \ge 0$ for all $\nu \in \mathcal{M}_{+}(E)$. Hence $\mu \notin M'$, and we are done.
	\end{proof}
	
	\begin{prop}
		Let $(E, \mathcal{F})$ be a measurable space. Write $\mathcal{S}=C_{bL}(E,\R_+)$ if $E$ is Polish (with $\mathcal{F}$ the corresponding Borel $\sigma$-algebra), $\mathcal{S}=C_{b}(E,\R_+)$ if $E$ is metrizable, or $\mathcal{S}=B(\mathcal{F}, \R_{+})$ otherwise. Then the Hilbert projective pseudo-metric can be written as follows: for $\mu, \nu \in \mathcal{M}_+(E)\setminus \{0\}$,
		\begin{equation}\label{eq:hilbert_bm}
			\hilbert (\mu,\nu) = \sup_{\substack{f,g \in \mathcal{S} \\\langle f,\mu \rangle, \langle g,\nu \rangle \neq 0} } \bigg\{ \log \frac{\langle f,\nu \rangle \langle g,\mu \rangle}{\langle f,\mu \rangle \langle g,\nu \rangle} \bigg\}.
		\end{equation}
	\end{prop}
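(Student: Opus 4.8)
The plan is to reproduce, \emph{mutatis mutandis}, the computation \eqref{eq:dual_beta_equiv} which gave the dual formula \eqref{eq:hilbert_dual}, but with the predual cone $\mathcal{S}$ playing the role of the dual cone $C^*$. The only ingredient of that earlier argument that genuinely used the dual space was the separation step: from $x \notin C$ one produced, via the Geometric Hahn--Banach theorem, an $f \in C^*$ with $\langle f, x\rangle < 0$. Here the corresponding fact is supplied directly by Propositions~\ref{prop:positive_measures_and_cbf} and~\ref{prop:positive_measures_and_bmf}, which---according to whether $E$ is Polish, metrizable, or merely measurable---give exactly
\begin{equation*}
	\mathcal{M}_+(E) = \bigl\{ \mu \in \mathcal{M}(E) : \langle f, \mu\rangle \ge 0, \; \forall f \in \mathcal{S} \bigr\},
\end{equation*}
so that $\mu \notin \mathcal{M}_+(E)$ if and only if $\langle f, \mu\rangle < 0$ for some $f \in \mathcal{S}$.

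First I would fix $\mu, \nu \in \mathcal{M}_+(E) \setminus \{0\}$ and apply this characterization to the signed measure $r\mu - \nu$, rewriting $\beta(\mu,\nu) = \sup\{r>0 : r\mu - \nu \notin \mathcal{M}_+(E)\}$ as
\begin{equation*}
	\beta(\mu,\nu) = \sup\bigl\{ r>0 : r\langle f,\mu\rangle < \langle f,\nu\rangle \text{ for some } f \in \mathcal{S} \bigr\} = \sup\Bigl\{ \tfrac{\langle f,\nu\rangle}{\langle f,\mu\rangle} : f \in \mathcal{S},\ \langle f,\mu\rangle \ne 0 \Bigr\},
\end{equation*}
exactly as in \eqref{eq:dual_beta_equiv}, and symmetrically for $\beta(\nu,\mu)$. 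Since $\hilbert(\mu,\nu) = \log\bigl(\beta(\mu,\nu)\beta(\nu,\mu)\bigr)$ and all pairings $\langle f,\mu\rangle, \langle f,\nu\rangle, \langle g,\mu\rangle, \langle g,\nu\rangle$ are non-negative for $f,g \in \mathcal{S}$, the two independent suprema (over $f$ and over $g$) merge into the single joint supremum of \eqref{eq:hilbert_bm}.

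The step I expect to require the most care---and which is the main obstacle---is the last equality above, namely replacing the supremum over $r$ by a supremum of ratios restricted to $\langle f,\mu\rangle \ne 0$. The delicate situation is $\beta(\mu,\nu) = \infty$, which arises precisely when some $f \in \mathcal{S}$ satisfies $\langle f,\mu\rangle = 0$ while $\langle f,\nu\rangle > 0$; then $r\langle f,\mu\rangle < \langle f,\nu\rangle$ holds for all $r>0$, yet this $f$ is excluded from the restricted ratio set. To see that the restricted supremum is nonetheless $+\infty$, I would use that $\mathcal{S}$ is a convex cone containing the constant function $\mathds{1}_E$, so $\langle \mathds{1}_E, \mu\rangle = \mu(E) > 0$ (as $\mu \neq 0$); then $f + \varepsilon\,\mathds{1}_E \in \mathcal{S}$ has $\langle f + \varepsilon\,\mathds{1}_E, \mu\rangle = \varepsilon\,\mu(E) > 0$ and $\langle f + \varepsilon\,\mathds{1}_E, \nu\rangle \ge \langle f,\nu\rangle > 0$, so the corresponding ratios tend to $+\infty$ as $\varepsilon \downarrow 0$. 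This approximation closes the gap and yields \eqref{eq:hilbert_bm}.
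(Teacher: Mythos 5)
Your proposal is correct and follows essentially the same route as the paper: invoke Propositions~\ref{prop:positive_measures_and_cbf} and~\ref{prop:positive_measures_and_bmf} to characterize $\mathcal{M}_+(E)$ via the test class $\mathcal{S}$, then repeat the computation \eqref{eq:dual_beta_equiv} with $\mathcal{S}$ in place of $C^*$. Your extra step handling the case $\beta(\mu,\nu)=\infty$ with $\langle f,\mu\rangle=0$ by perturbing with $\varepsilon\,\mathds{1}_E$ (which does lie in all three choices of $\mathcal{S}$) is a detail the paper leaves implicit, and it is a valid way to close that gap.
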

	\begin{proof}
		Consider any $\mu, \nu \in \mathcal{M}_+(E)$, and take $r \in \R_{+}$ such that $r\mu - \nu \notin \mathcal{M}_+(E)$. Using Proposition~\ref{prop:positive_measures_and_cbf} and Proposition~\ref{prop:positive_measures_and_bmf}, there is an $f \in \mathcal{S}$ such that $\langle f, r\mu - \nu \rangle <0$. Then a calculation analogous to \eqref{eq:dual_beta_equiv} gives equivalence between \eqref{eq:hilbert_bm} and the original definition of the Hilbert metric \eqref{eq:hilbert_beta}.
	\end{proof}
	
	Now, a natural question to ask is under which conditions $\hilbert(\mu,\nu)$ is finite. For $\mu,\nu \in \mathcal{M}_+(E)$, let
	\begin{equation}\label{eq:beta_predual}
		\beta(\mu,\nu) = \sup_{\substack{f \in B(\mathcal{F}, \R_+)\\ \langle f, \mu \rangle \neq 0}}
		\bigg\{ \frac{\langle f, \nu \rangle}{\langle f, \mu \rangle} \bigg\}.
	\end{equation}
	Clearly, $H(\mu, \nu) < \infty$ if and only if $\beta(\mu,\nu), \beta(\nu,\mu) < \infty$. We see immediately that if there exists an unbounded measurable function $h \in L^0(\mathcal{F},\R_+)$ such that $\langle h, \mu \rangle < \infty$ but $\langle h, \nu \rangle = \infty$, then we can take a sequence of bounded functions $h_n \in B(\mathcal{F}, \R_+)$ such that $h_n \to h$, and the right-hand side of \eqref{eq:beta_predual} is infinite. Consequently, if, for example, $\nu$ has a strictly smaller number of finite moments than $\mu$, then $\beta (\mu,\nu) = \infty$, and conversely if $\nu$ has (strictly) more finite moments, then $\beta(\nu,\mu) = \infty$. Thus, to have $\hilbert (\mu,\nu) < \infty$, we need a condition on $\mu, \nu$ that is quite a lot stronger than simple equivalence of measures (which we denote as usual by $\sim$). This condition, which we call \textit{comparability} again, in accordance with Birkhoff, and denote by $\comp$, has already been stated in \cite[Def.~3.1]{legland04} and \cite{atar-zeitouni97}. Here we derive it directly from the `predual' formulation \eqref{eq:hilbert_bm}.
	
	Let $\mu \sim \nu$, with Radon-Nikodym derivatives $\tfrac{\di \mu}{\di \nu}$ and $\tfrac{\di \nu}{\di \mu}$. For all $\varphi \in B(\mathcal{F}, \R_+)$ we have $\langle \varphi, \nu \rangle  = \langle \varphi \frac{\di \nu}{\di \mu}, \mu \rangle \le \| \frac{\di \nu}{\di \mu}  \|_{L^{\infty}(\mu)} \langle \varphi, \mu \rangle$, so \begin{equation*}
		M_{\varphi} :=\sup_{\substack{\varphi \in B(\mathcal{F}, \R_+)\\ \langle \varphi, \mu \rangle \neq 0}} \bigg\{ \frac{\langle \varphi, \nu \rangle}{\langle \varphi, \mu \rangle} \bigg\}\le \left\| \frac{\di \nu}{\di \mu}  \right\|_{L^{\infty}(\mu)}.
	\end{equation*}
	On the other hand,
	\begin{equation*}
		\Big\langle \varphi \frac{\di \nu}{\di \mu}, \mu \Big\rangle
		= \langle \varphi, \nu  \rangle
		\le \langle \varphi, \mu \rangle M_{\varphi},
	\end{equation*}
	so $\frac{\di \nu}{\di \mu} \le M_{\varphi}$ $\mu$-a.e., and in particular $\big\| \frac{\di \nu}{\di \mu}  \big\|_{L^{\infty}(\mu)} \le M_{\varphi}$.
	Therefore, $\beta(\mu,\nu) =  \big\| \tfrac{\di \nu}{\di \mu}  \big\|_{L^{\infty}(\mu)}$, and hence we can state our comparability condition as follows.
	\begin{defn}
		Let $(E, \mathcal{F})$ be a measurable space. Two positive measures $\mu, \nu \in \mathcal{M}_+(E)$ are \textit{comparable} if $\mu \sim \nu$ and their Radon-Nikodym derivatives $\frac{\di \mu}{\di \nu}$ and $\frac{\di \nu}{\di \mu}$ are essentially bounded, i.e.~ $\frac{\di \mu}{\di \nu} \in L^\infty(\nu)$ and $\frac{\di \nu}{\di \mu} \in L^\infty(\mu)$. Equivalently, $\mu$ and $\nu$ are comparable if there exists scalars $q,r >0 $ such that
		\begin{equation}\label{eq:comparability}
			q\mu(A) \le \nu(A) \le r\mu(A), \quad \forall A \in \mathcal{F}.
		\end{equation}
	\end{defn}
	
	Then the Hilbert projective pseudo-metric for $\mu,\nu \in \mathcal{M}_+(E)$ can be defined as
	\begin{equation}\label{eq:hilbert_indicator_derivatives}
		\hilbert(\mu,\nu) = \log \left( \left\| \frac{\di \mu}{\di \nu}   \right\|_{\infty} \left\| \frac{\di \nu}{\di \mu}   \right\|_{\infty}     \right)
		= \hspace{-5pt} \sup_{\substack{A, B \in \mathcal{F} \\ \nu(A) > 0, \, \mu(B) >0 }} \hspace{-3pt} \bigg\{ \log \frac{\nu(B) \mu(A)}{\mu(B) \nu(A)} \bigg\}, 
		\quad \text{if } \mu \comp \nu,
	\end{equation}
	and $\hilbert(\mu,\nu) = \infty$ otherwise, and these definitions are equivalent to \eqref{eq:hilbert_bm}. The right-most formulation of \eqref{eq:hilbert_indicator_derivatives} is the definition chosen, for example, by Le Gland and Oudjane in \cite[Def.~3.3]{legland04} and Atar and Zeitouni in \cite{atar-zeitouni97}. Note that if $\mu, \nu \in \mathcal{P}(E)$, then $ \big\| \frac{\di \mu}{\di \nu}   \big\|_{\infty}, \big\| \frac{\di \nu}{\di \mu}   \big\|_{\infty} \ge 1$ since $\mu$ and $\nu$ must integrate to 1, and hence also $ \big\| \frac{\di \mu}{\di \nu}   \big\|_{\infty}, \big\| \frac{\di \nu}{\di \mu}   \big\|_{\infty} \le e^{\hilbert(\mu,\nu)}$ by \eqref{eq:hilbert_indicator_derivatives}. Then, for $\mu,\nu \in \mathcal{P}(E)$, $\mu \sim \nu$, and an arbitrary $f \in B(\mathcal{F}, \R)$ such that $\| f \|_{\infty} \le 1$, we have
	\begin{align*}
		| \langle f, \mu - \nu \rangle | 
		&\le  \int_{\big\{ \frac{\di \mu}{\di \nu} \ge 1 \big\}} |f| \Big(  \frac{\di \mu}{\di \nu} - 1  \Big) \di \nu + \int_{\big\{ \frac{\di \mu}{\di \nu} < 1 \big\}} |f| \Big(  1 -\frac{\di \mu}{\di \nu}  \Big) \di \nu \\
		&\le \Big( \Big\| \frac{\di \mu}{\di \nu} \Big\|_{\infty} - 1 \Big) \nu \big(\big\{ \tfrac{\di \mu}{\di \nu} \ge 1 \big\} \big)
		+\Big( 1 - \Big\| \dfrac{\di \nu}{\di \mu} \Big\|_{\infty}^{-1} \Big) \nu \big(\big\{ \tfrac{\di \mu}{\di \nu} < 1 \big\} \big) 
		\le e^{\hilbert (\mu,\nu)} - 1.
	\end{align*}
	Together with the fact that the total variation distance between two probability measures is at most 2, this yields the following bound, first shown by Atar and Zeitouni \cite[Lemma~1]{atar-zeitouni97}:
	\begin{equation}\label{eq:atar_zeitouni_bound}
		\|\mu - \nu \|_{\tvnorm} \le \frac{2}{\log 3} \hilbert (\mu, \nu).
	\end{equation}
	This bound is clearly not sharp, since the right-hand side can easily be much larger than 2. We will improve it in Corollary~\ref{cor:tv_H_bound_prob_measures}. For now, we use Atar and Zeitouni's result to prove the following Lemma.
	
	\begin{lemma}\label{lemma:P_H_complete}
		Let $(E, \mathcal{F})$ be a measurable space. Then $(\mathcal{P}(E), \hilbert)$ is a complete metric space.
	\end{lemma}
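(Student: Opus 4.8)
The plan is to reduce completeness in $\hilbert$ to completeness in total variation, and then to \emph{upgrade} total variation convergence back to $\hilbert$-convergence by exploiting the Cauchy property in $\hilbert$ together with the supremum representation \eqref{eq:hilbert_indicator_derivatives}.

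First I would take a sequence $(\mu_n)_{n}$ that is Cauchy in $(\mathcal{P}(E), \hilbert)$. By the Atar--Zeitouni bound \eqref{eq:atar_zeitouni_bound}, $\|\mu_n - \mu_m\|_{\tvnorm} \le \tfrac{2}{\log 3}\hilbert(\mu_n, \mu_m)$, so $(\mu_n)$ is also Cauchy in the total variation norm. Since $(\mathcal{M}(E), \|\cdot\|_{\tvnorm})$ is a Banach space, there is a limit $\mu \in \mathcal{M}(E)$ with $\|\mu_n - \mu\|_{\tvnorm} \to 0$. Because total variation convergence implies setwise convergence (indeed $|\mu_n(A) - \mu(A)| \le \|\mu_n - \mu\|_{\tvnorm}$ for every $A \in \mathcal{F}$), the limit inherits $\mu(E) = 1$ and $\mu(A) \ge 0$ for all $A$, so in fact $\mu \in \mathcal{P}(E)$.

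It remains to show that $\mu_n \to \mu$ in $\hilbert$, and this is where the real work lies. I would argue that $\hilbert(\mu_n, \cdot)$ is lower semicontinuous along the setwise-convergent sequence $(\mu_m)$. Fix $\varepsilon > 0$ and $N$ with $\hilbert(\mu_n, \mu_m) < \varepsilon$ for all $n, m \ge N$. For any fixed sets $A, B \in \mathcal{F}$ with $\mu(A) > 0$ and $\mu_n(B) > 0$, setwise convergence gives $\mu_m(A) > 0$ for all large $m$, and then by \eqref{eq:hilbert_indicator_derivatives},
\begin{equation*}
\log \frac{\mu_m(B)\,\mu_n(A)}{\mu_n(B)\,\mu_m(A)} \le \hilbert(\mu_n, \mu_m) < \varepsilon.
\end{equation*}
Letting $m \to \infty$ and using setwise convergence of $\mu_m$ to $\mu$ yields $\log \frac{\mu(B)\,\mu_n(A)}{\mu_n(B)\,\mu(A)} \le \varepsilon$ for each such pair $(A,B)$. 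Taking the supremum over all admissible $A, B$ and invoking the representation \eqref{eq:hilbert_indicator_derivatives} for $\hilbert(\mu_n, \mu)$, I obtain $\hilbert(\mu_n, \mu) \le \varepsilon$ for all $n \ge N$; note that finiteness of this supremum simultaneously certifies $\mu_n \comp \mu$, so the bound is genuinely a bound in the metric. Hence $\mu_n \to \mu$ in $\hilbert$, and the space is complete.

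The main obstacle is precisely this last step: since the Hilbert topology is strictly finer than total variation, the limit $\mu$ produced by TV-completeness could a priori fail to be a $\hilbert$-limit, or even to be $\hilbert$-comparable to the $\mu_n$. The resolution is the lower-semicontinuity argument above, which hinges on the observation that each individual term $\log \frac{\mu_m(B)\mu_n(A)}{\mu_n(B)\mu_m(A)}$ in the supremum \eqref{eq:hilbert_indicator_derivatives} is continuous under setwise convergence, so that the $\varepsilon$-bound survives passage to the limit \emph{before} the supremum is taken. The only technical care needed is in handling the positivity constraints $\mu(A), \mu_n(B) > 0$ when passing to the limit, which is why I restrict attention to sets with $\mu(A) > 0$ and use setwise convergence to guarantee $\mu_m(A) > 0$ for large $m$.
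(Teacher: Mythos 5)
Your overall strategy is exactly the paper's: use the Atar--Zeitouni bound \eqref{eq:atar_zeitouni_bound} to transfer the Cauchy property to total variation, obtain a limit $\mu\in\mathcal{P}(E)$ there, and then upgrade to $\hilbert$-convergence by lower semicontinuity of $\hilbert$ as a supremum of quantities that are continuous under TV (equivalently setwise) convergence. The only difference is cosmetic in intent but not entirely in substance: the paper runs the lower-semicontinuity argument through the functional representation \eqref{eq:hilbert_bm}, whereas you run it through the set-based representation \eqref{eq:hilbert_indicator_derivatives}.

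That choice creates the one real weak point in your write-up. The identity \eqref{eq:hilbert_indicator_derivatives} equates $\hilbert(\mu_n,\mu)$ with the supremum over pairs of sets \emph{only under the hypothesis} $\mu_n\comp\mu$; otherwise $\hilbert(\mu_n,\mu)=\infty$ by definition, regardless of the value of the set-supremum. Your parenthetical claim that ``finiteness of this supremum simultaneously certifies $\mu_n\comp\mu$'' is false in general: take $E=\{1,2\}$, $\mu_n=\delta_1$ and $\mu=\tfrac12\delta_1+\tfrac12\delta_2$; the admissible pairs $(A,B)$ (with $\mu(A)>0$, $\mu_n(B)>0$) give a supremum of $\log 2<\infty$, yet the measures are not even equivalent and $\hilbert(\mu_n,\mu)=\infty$. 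Indicators cannot detect non-comparability the way general bounded positive functions can, which is precisely why the paper's formula \eqref{eq:hilbert_bm} is stated unconditionally while \eqref{eq:hilbert_indicator_derivatives} carries the proviso ``if $\mu\comp\nu$''. The gap is easily closed in your setting: since $\hilbert(\mu_n,\mu_m)<\varepsilon$ for $n,m\ge N$ and the measures are probabilities, one has $e^{-\varepsilon}\mu_n(A)\le\mu_m(A)\le e^{\varepsilon}\mu_n(A)$ for every $A\in\mathcal{F}$, and letting $m\to\infty$ yields $e^{-\varepsilon}\mu_n(A)\le\mu(A)\le e^{\varepsilon}\mu_n(A)$, which is exactly the comparability condition \eqref{eq:comparability}. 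With that inserted, your argument is complete; alternatively, arguing through \eqref{eq:hilbert_bm} as the paper does avoids the issue altogether.
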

	\begin{proof}
		Note that two probability measures $\mu,\nu \in \mathcal{P}(E)$ which are collinear must be necessarily equal, so $\hilbert$ is a metric on $\mathcal{P}(E)$. Let $(\mu_n) \in \mathcal{P}(E)$ be a Cauchy sequence for the Hilbert metric $\hilbert$. Then $(\mu_n) $ is also Cauchy in total variation norm by \eqref{eq:atar_zeitouni_bound}, so $\mu_n \rightarrow \mu \in \mathcal{P}(E)$ in $\| \cdot \|_{\tvnorm}$ since $\mathcal{P}(E)$ is complete as it is a closed subset of $\mathcal{M}(E)$. Since $\langle f, \mu_n \rangle \to \langle f, \mu \rangle$ for $f \in B(E, \R)$ if $\mu_n \to \mu$ in total variation norm, \eqref{eq:hilbert_bm} gives that $\hilbert$ is lower semi-continuous with respect to $\| \cdot \|_{\tvnorm}$, as a supremum over continuous functions. Hence $\hilbert (\mu_n, \mu) \le \liminf_{k \to \infty} \hilbert (\mu_n, \mu_k)$, where the right-hand side goes to $0$ as $n \to \infty$ by the Cauchy assumption. 
	\end{proof}
	
	\begin{corollary}
		Let $(E, \mathcal{F})$ be a measurable space. Then $(\mathcal{P}(E), \mathcal{T})$ is a complete metric space.
	\end{corollary}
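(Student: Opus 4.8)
The plan is to exploit that $\mathcal{T} = \tanh(\hilbert/4)$ is obtained from $\hilbert$ by applying a fixed homeomorphism $\phi(t) = \tanh(t/4)$ from $[0,\infty]$ onto $[0,1]$, so that the two metrics are topologically equivalent and completeness transfers directly from Lemma~\ref{lemma:P_H_complete}. That $\mathcal{T}$ is indeed a metric on $\mathcal{P}(E)$ was already observed (symmetry and the triangle inequality are inherited from $\hilbert$), and $\mathcal{T}(\mu,\nu)=0 \iff \hilbert(\mu,\nu)=0 \iff \mu=\nu$ on $\mathcal{P}(E)$ by the argument opening the proof of Lemma~\ref{lemma:P_H_complete}.

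First I would take a $\mathcal{T}$-Cauchy sequence $(\mu_n) \subset \mathcal{P}(E)$ and show it is $\hilbert$-Cauchy. The key point is that $\mathcal{T}$ takes values in $[0,1)$ on pairs at finite $\hilbert$-distance, while the value $\mathcal{T}=1$ exactly encodes infinite $\hilbert$-distance. Being $\mathcal{T}$-Cauchy, there is an $N$ with $\mathcal{T}(\mu_n, \mu_m) \le 1/2$ for all $n,m \ge N$; hence the tail stays in the region where $\phi^{-1}(s) = 4\arctanh(s)$ is a genuine, finite-valued, continuous function, and $\hilbert(\mu_n, \mu_m) = 4\arctanh\big(\mathcal{T}(\mu_n, \mu_m)\big)$. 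Since $\arctanh$ is continuous on $[0,1)$ with $\arctanh(0)=0$, the right-hand side tends to $0$ as $n,m \to \infty$, so $(\mu_n)$ is Cauchy in $\hilbert$.

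By Lemma~\ref{lemma:P_H_complete}, $(\mathcal{P}(E), \hilbert)$ is complete, so $\mu_n \to \mu$ in $\hilbert$ for some $\mu \in \mathcal{P}(E)$. Finally, I would transfer this convergence back to $\mathcal{T}$: continuity of $\tanh$ at $0$ gives $\mathcal{T}(\mu_n, \mu) = \tanh\big(\hilbert(\mu_n, \mu)/4\big) \to \tanh(0) = 0$, so $\mu_n \to \mu$ in the $\mathcal{T}$-distance as well, establishing completeness.

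The only real subtlety—and the step deserving care—is the passage from $\mathcal{T}$-Cauchy to $\hilbert$-Cauchy, precisely because $\tanh$ compresses $[0,\infty]$ into the bounded interval $[0,1]$: one must first use the Cauchy property to confine the tail of the sequence to $\mathcal{T}$-values bounded away from $1$, where the inverse map $\arctanh$ is well-behaved, before concluding that $\hilbert(\mu_n,\mu_m)\to 0$. Everything else reduces to the continuity of $\tanh$ and of its inverse.
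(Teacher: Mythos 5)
Your proof is correct and follows exactly the route the paper intends: the corollary is stated without proof as an immediate consequence of Lemma~\ref{lemma:P_H_complete}, precisely because $\mathcal{T}=\tanh(\hilbert/4)$ is a homeomorphic reparametrisation of $\hilbert$, which is the argument you spell out. Your care in first confining the tail of a $\mathcal{T}$-Cauchy sequence to values bounded away from $1$ before inverting $\tanh$ is exactly the one point worth making explicit, and you handle it properly.
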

	
	We have seen in Section~\ref{sec:defn_and_contraction} that many of the interesting properties of the Hilbert metric also hold for its transformation $\mathcal{T}$. The following gives a key reason why the classic Hilbert pseudo-metric is also of interest:  $\mathcal{H}$ turns the space of probability measures comparable to a reference measure $\rho$ into a normed vector space (with a modified algebra).
	
	\begin{prop}\label{prop:hilbert_is_norm_general}
		Let $(E,\mathcal{F})$ be a measurable space and fix a reference finite measure $\rho\in \mathcal{M}_+(E)$. We let essential infima and suprema be defined with respect to the nullsets of $\rho$, and consider
		\begin{enumerate}[label=(\roman*)]
			\item the space of measures comparable to $\rho$, namely $\mathcal{M}_{\rho} := \{\mu\in \mathcal{M}_+(E): \frac{\di \mu}{\di\rho}, \frac{\di \rho}{\di\mu}\in L^\infty(\rho)\}$, and $\mathcal{P}_\rho := \mathcal{M}_\rho\cap \mathcal{P}(E)$;
			\item  the equivalence relation $\sim_{\mathrm{coll}}$ on $\mathcal{M}_\rho$ given by collinearity, that is $\mu \sim_{\mathrm{coll}} \nu \Leftrightarrow \mu = c\nu$ for some $c>0$; note that $\mathcal{P}_\rho$ is isomorphic to $\mathcal{M}_\rho \big/ \sim_{\mathrm{coll}}$ (as it is a selection of a unique element from each equivalence class);
			\item  the equivalence relation $\sim_{\mathrm{const}}$  on $L^\infty(\rho)$ given by $f\sim_{\mathrm{const}} g\Leftrightarrow f=g+c$  $\rho$-a.e. for some $c\in \mathbb{R}$; and the associated quotient space $\Theta_{\rho}:=L^\infty(\rho)/\sim_{\mathrm{const}}$.
		\end{enumerate}
		Then the map
		\[\|\cdot\|_\Theta: L^\infty(\rho) \to \mathbb{R}; \quad f \mapsto \esssup_{x\in E}f(x) - \essinf_{x\in E}f(x),\]
		defines a seminorm on $L^\infty(\rho)$, and a norm on $\Theta_{\rho}$. Moreover, the map
		\[\theta:\mathcal{M}_\rho\to L^\infty(\rho); \quad \mu \mapsto \log(\di\mu/\di\rho)\] is an isomorphism of the pseudo-metric spaces $(\mathcal{M}_\rho, \hilbert)$ and $(L^\infty(\rho),\|\cdot\|_\Theta)$, satisfying
		\[\mathcal{H}(\mu,\nu) = \|\theta(\mu)-\theta(\nu)\|_\Theta, \qquad \text{ for all }\mu,\nu \in \mathcal{M}_\rho,\]
		and it is an isomorphism of the metric spaces $(\mathcal{P}_\rho, \hilbert)$ and $(\Theta_{\rho} , \|\cdot\|_\Theta)$. In particular, $(\mathcal{P}_\rho, \hilbert)$ is a normed vector space, when endowed with the algebra of (renormalized) addition and scalar multiplication of log-densities.
	\end{prop}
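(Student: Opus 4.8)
The plan is to treat the three assertions—that $\|\cdot\|_\Theta$ is a seminorm on $L^\infty(\rho)$ (and a norm on $\Theta_\rho$), that $\theta$ is an isometric bijection, and that the whole structure descends to the probability simplex—as a sequence of verifications organized around the one genuine computation, namely the identity $\hilbert(\mu,\nu) = \|\theta(\mu)-\theta(\nu)\|_\Theta$.

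First I would establish the seminorm properties of $\|\cdot\|_\Theta$. Absolute homogeneity follows from $\esssup(\lambda f) = \lambda\esssup f$ and $\essinf(\lambda f)=\lambda\essinf f$ for $\lambda\ge 0$, together with the order-reversal of $\esssup$ and $\essinf$ under negation; subadditivity follows from $\esssup(f+g)\le \esssup f + \esssup g$ and the reverse inequality for the essential infimum. The kernel is exactly the $\rho$-a.e.\ constant functions, since $\|f\|_\Theta = 0 \iff \esssup f = \essinf f$; as adding a constant shifts the essential supremum and infimum equally, $\|\cdot\|_\Theta$ is constant on each $\sim_{\mathrm{const}}$-class and hence descends to a genuine norm on $\Theta_\rho$.

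Next I would check that $\theta$ is well-defined and bijective. Comparability of $\mu$ to $\rho$ means $\di\mu/\di\rho$ is bounded above and, being the reciprocal of $\di\rho/\di\mu\in L^\infty(\rho)$, bounded away from $0$, so $\log(\di\mu/\di\rho)\in L^\infty(\rho)$; conversely any $f\in L^\infty(\rho)$ equals $\theta(\mu)$ for $\di\mu = e^f\di\rho$, which lies in $\mathcal{M}_\rho$ as $e^f$ is bounded above and below. Injectivity is immediate. The core identity then uses the right-hand formula in \eqref{eq:hilbert_indicator_derivatives}: since $\mu,\nu,\rho$ are mutually equivalent we have $\mu\comp\nu$, and writing $\phi=\theta(\mu)$, $\psi=\theta(\nu)$ the chain rule gives $\log(\di\mu/\di\nu)=\phi-\psi$ $\rho$-a.e., whence $\|\di\mu/\di\nu\|_\infty = e^{\esssup(\phi-\psi)}$ and $\|\di\nu/\di\mu\|_\infty = e^{-\essinf(\phi-\psi)}$. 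Taking the logarithm of the product yields exactly $\esssup(\phi-\psi)-\essinf(\phi-\psi)=\|\phi-\psi\|_\Theta$. The only delicate point here is that all essential suprema and infima agree because $\mu,\nu,\rho$ share null sets; this is precisely why one fixes the null sets of $\rho$ as the reference throughout.

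Finally I would pass to the quotients. Collinearity $\mu=c\nu$ corresponds under $\theta$ to $\phi=\psi+\log c$, i.e.\ to $\sim_{\mathrm{const}}$-equivalence, so the seminorm kernel matches collinearity and $\theta$ descends to a bijection $\mathcal{M}_\rho/\!\sim_{\mathrm{coll}}\,\to\,\Theta_\rho$; via the identification $\mathcal{P}_\rho\cong\mathcal{M}_\rho/\!\sim_{\mathrm{coll}}$ (each class containing the unique normalization $\mu/\mu(E)$, with $\mu(E)\in(0,\infty)$ by comparability) this becomes a bijection $\mathcal{P}_\rho\to\Theta_\rho$. On these quotients $\hilbert$ is a true metric and $\|\cdot\|_\Theta$ a true norm, and the identity above descends verbatim, giving the claimed isometric isomorphism. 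Transporting the vector-space operations of $\Theta_\rho$ through $\theta$ then produces the modified algebra on $\mathcal{P}_\rho$: addition sends $(\mu,\nu)$ to the renormalization of the measure with $\rho$-density $\tfrac{\di\mu}{\di\rho}\tfrac{\di\nu}{\di\rho}$, and scalar multiplication by $\lambda$ to the renormalization of $(\di\mu/\di\rho)^\lambda$, the renormalization being invisible to $\|\cdot\|_\Theta$ since it only shifts log-densities by a constant. I expect the main obstacle to be bookkeeping—keeping the null-set convention and the two quotient relations coherent so that every descended map is well-defined—rather than any single hard estimate.
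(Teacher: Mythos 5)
Your proposal is correct and follows essentially the same route as the paper: the core of both arguments is the identity $\hilbert(\mu,\nu)=\esssup(\theta(\mu)-\theta(\nu))-\essinf(\theta(\mu)-\theta(\nu))$ obtained from \eqref{eq:hilbert_indicator_derivatives} via the chain rule for Radon--Nikodym derivatives, together with the explicit inverse $f\mapsto e^f\di\rho$ (normalized on $\mathcal{P}_\rho$) and the matching of $\sim_{\mathrm{coll}}$ with $\sim_{\mathrm{const}}$. The only cosmetic difference is that you verify subadditivity of $\|\cdot\|_\Theta$ directly from properties of $\esssup$ and $\essinf$, whereas the paper deduces it from the triangle inequality for $\hilbert$ after establishing the isometry; both are valid.
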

	
	\begin{proof}
		It is easy to see that $\|\cdot\|_\Theta$ is absolutely homogeneous.  From \eqref{eq:hilbert_indicator_derivatives}, we know that
		\begin{align*}
			\hilbert(\mu,\nu) &= \log \left( \left\| \frac{\di \mu}{\di \nu}   \right\|_{\infty}\right) + \log \left( \left\| \frac{\di \nu}{\di \mu}   \right\|_{\infty}     \right)\\
			&= \esssup \bigg\{\log\Big(\frac{\di \mu}{\di \nu}\Big) \bigg\} - \essinf \bigg\{\log\Big(\frac{\di \mu}{\di \nu}\Big) \bigg\}\\
			& = \esssup \bigg\{\theta(\mu)-\theta(\nu) \bigg\} - \essinf \bigg\{\theta(\mu)-\theta(\nu) \bigg\}\\
			&=\|\theta(\mu)-\theta(\nu)\|_\Theta.
		\end{align*}
		From this it follows that $\|\cdot\|_{\Theta}$ is sublinear (as $\hilbert$ satisfies the triangle inequality), and is therefore a seminorm. It is easy to check that $\|\theta\|_{\Theta}=0$ iff $\theta\sim_{\mathrm{const}} 0$, so $\|\cdot\|_{\Theta}$ is a norm on the vector space $\Theta_{\rho}=L^\infty(\rho)/\sim_{\mathrm{const}}$.
		
		For $f \in \Theta_{\rho}$, the inverse of $\theta:\mathcal{M}_\rho\to L^\infty(\rho)$ is given by
		\begin{equation*}
			\theta^{-1}(f)(A) = \int_A \exp(f(x))\di\rho, \qquad \forall A \in \mathcal{F},
		\end{equation*}
		so $\theta$ is clearly a bijection, and hence an isomorphism of $(\mathcal{M}_\rho, \hilbert)$ and $(L^\infty(\rho),\|\cdot\|_\Theta)$.  Similarly, taking account of the equivalence relation, the inverse of $\theta:\mathcal{P}_\rho \to \Theta_{\rho}$ is given, for $f\in \Theta_\rho$, by
		\begin{equation*}
			\theta^{-1}(f+c)(A) = \frac{\int_A \exp(f(x))\di\rho}{\int_E \exp(f(x))\di\rho}, \qquad \forall A \in \mathcal{F},
		\end{equation*}
		which clearly does not depend on the choice of $c\in \mathbb{R}$ (and hence is well defined on $\Theta_{\rho}=L^\infty(\rho)/\sim_{\mathrm{const}}$). It follows that $\theta$ is an isomorphism of $(\mathcal{P}_\rho, \hilbert)$ and $(\Theta_{\rho}, \|\cdot\|_\Theta)$.
		
		Finally, as $(\Theta_{\rho}, \|\cdot\|_\Theta)$ is a normed vector space, we simply observe that addition and scalar multiplication in $\Theta_{\rho}$ correspond to (renormalized) addition and scalar multiplication of log-densities.
	\end{proof}
	
	\begin{rmk}\label{rmk:quotient_in_V}
		We will see in Section~\ref{sec:geometry} that, when $E$ is finite, we can avoid the equivalence relation above by selecting the unique representative $\theta_0(\mu)$ which satisfies $\theta_0(\mu)(x_0)=0$ for a fixed $x_0\in E$ (see Remark~\ref{rmk:choice_of_theta_zero}). This does not work as cleanly in infinite state spaces, as the value at a single point is typically not well defined when functions are only specified $\rho$-a.e.
	\end{rmk}
	
	\begin{rmk}
		Proposition~\ref{prop:hilbert_is_norm_general} also helps us to understand the topology of $(\mathcal{P}(E), \hilbert)$. For every $\rho\in \mathcal{P}(E)$, we have the corresponding vector space $\mathcal{P}_\rho$ (and any $\rho'\in \mathcal{P}_\rho$ will yield $\mathcal{P}_{\rho'}=\mathcal{P}_\rho)$. As they are normed vector spaces (with an appropriate algebra), these sets are both closed and open, and give a disconnected partition of $\mathcal{P}(E)$. In other words, $(\mathcal{P}(E), \hilbert)$ has the topology of a disjoint union of normed vector spaces (which may have different dimensions). 
	\end{rmk}
	
	We conclude this (rather lengthy) section about the Hilbert metric on probability measures with a few important observations, which motivate why we started looking carefully at the `predual' formulation of the Hilbert metric in the first place.
	\begin{rmk}
		Take $\mu,\nu \in \mathcal{M}_+(E)$, with $E$ Polish. Consider distances of the form
		\begin{equation*}
			D(\mu, \nu) = \sup \Big\{ \Big| \int_E f \di (\mu-\nu) \Big| \, : \, f \in X   \Big\}, \quad X \subseteq C_{bL}(E, \R_+).
		\end{equation*}
		Different conditions on $\| f \|_{\infty}$ and $\| f \|_{Lip}$  yield different metrics: the total variation norm \eqref{eq:total_var_norm_bm} if one imposes $\| f \|_{\infty} \le 1$, the bounded-Lipschitz distance by taking $\| f \|_{\infty} + \| f \|_{Lip} \le 1$, or the 1-Wasserstein distance $\mathcal{W}_1$ (when $\mu$ and $\nu$ are additionally taken to have finite first moment) by imposing $\| f \|_{Lip} \le 1$. This differentiation based on the choice of test-functions is \textit{completely lost} when we work with the Hilbert metric. If we restricted our space to $\mathcal{P}_1(E)$ (the probability measures on $E$ with finite first moment), for example, we could of course characterize our `predual' cone of test-functions using (unbounded) positive Lipschitz functions $Lip(E, \R_+)$, analogously to the Kantorovich--Rubinstein dual formulation of $\mathcal{W}_1$. However, this would not yield a different metric from \eqref{eq:hilbert_bm}. Since any Lipschitz function can be approximated from below by bounded Lipschitz functions, if $\mu \comp \nu$ and $\mu,\nu \in \mathcal{P}_1(E)$, taking the supremum over $Lip(E, \R_+)$ or $C_{bL}(E, \R_+)$ does not change the Hilbert distance.
	\end{rmk}
	
	In the wake of the above remark, we deduce that the Hilbert metric is stronger than the $p$-Wasserstein distance $\mathcal{W}_p$.
    \begin{prop}
		Let $(E, d)$ be a metric space, and let $\{\mu_n\} \in \mathcal{P}_{p}(E)$ be a sequence of probability measures with finite $p^{\text{th}}$-moment converging to $\mu \in \mathcal{P}(E)$ in $\hilbert$. Then $\mathcal{W}_p(\mu_n, \mu) \to 0$ as $n \to \infty$ also.
    \end{prop}
    \begin{proof}
		By \eqref{eq:atar_zeitouni_bound}, convergence in Hilbert metric implies convergence in total variation norm, which in turn implies $\mu_n \to \mu$ weakly. Moreover, $\mu \in \mathcal{P}_p(E)$, by definition of the Hilbert metric and comparability of measures \eqref{eq:comparability}. Fix an arbitrary $x_0 \in E$. Then an argument similar to the one that lead to \eqref{eq:atar_zeitouni_bound} yields that, for all $q \le p$,
		\begin{equation}\label{eq:W_p_convergence}
			\bigg|  \int_{E} d(x_0,x)^q \di (\mu_n - \mu) \bigg|
			\le K_q \big( e^{\hilbert(\mu_n, \mu)} -1 \big),
		\end{equation}
		where $K_q < \infty$ is the $q^{\text th}$-moment of $\mu \in \mathcal{P}_p(E)$. So convergence of moments is preserved under convergence in the Hilbert metric, and thus convergence in the Hilbert metric implies convergence in $\mathcal{W}_p$.
    \end{proof}
	
	The Kantorovich--Rubinstein dual formulation in particular yields the following bound for the $\mathcal{W}_1$ distance with respect to $\hilbert$.
	\begin{corollary}
		For $\mu,\nu \in \mathcal{P}_1(E)$, and an arbitrary $x_0 \in E$, we have
		\begin{equation}\label{eq:W_1_bound_hilbert}
			\mathcal{W}_1(\mu, \nu) \le \big( e^{\hilbert(\mu, \nu)} -1 \big) \, \min \left\{ \int_E d(x_0,x) \di \mu , \int_E d(x_0,x) \di \nu \right\}.
		\end{equation}
	\end{corollary}
	
	Finally, our work so far, the definition of the Hilbert metric and of comparability of measures \eqref{eq:comparability}, all clearly emphasise that convergence in the Hilbert metric is a very strong form of convergence. The Hilbert projective metric not only dominates $\tvnormbig$ and $\mathcal{W}_p$, but also the Kullback--Leibler divergence (or relative entropy):
	\begin{equation}
		D_{KL}(\mu \| \nu ) : = \int_E \log \frac{\di \mu}{\di \nu} \di \mu \le \log \Big\| \frac{\di \mu}{\di \nu} \Big\|_{\infty} \le \hilbert (\mu,\nu).
	\end{equation}
	
	In fact, one can show that the Hilbert metric dominates all $f$-divergences.
	\begin{defn}[$f$-divergence]\label{defn:f_divergence}
		Let $f: \, \R_+ \to (-\infty, \infty]$ be a convex function with $f(1) = 0$, and $f(x)<\infty$ for all $x>0$. Let $\mu,\nu \in \mathcal{P}(E)$, with $\mu \ll \nu$. Then the $f$-divergence of $\mu$ from $\nu$, denoted by $D_f(\mu \| \nu)$, is given by
		\begin{equation}\label{eq:f_divergence_def}
			D_{f}(\mu \| \nu) = \int_{E} f \Big( \frac{\di \mu}{\di \nu}   \Big) \di \nu.
		\end{equation}
	\end{defn}
	\begin{exmp}
		Total variation distance, Kullback--Leibler divergence, Jensen--Shannon divergence, squared Hellinger distance, $\alpha$-divergence and $\chi^2$-divergence are all examples of $f$-divergences.
	\end{exmp}
	\begin{prop}
		Let $(E, \mathcal{F})$ be a measurable space, and let $\{\mu_n\} \in \mathcal{P}(E)$ be a sequence of probability measures converging to $\mu \in \mathcal{P}(E)$ in $\hilbert$. Then, for any $f$-divergence $D_f$, $D_{f}(\mu_n \| \mu) \to 0$ and $D_{f}(\mu \| \mu_n) \to 0$ as $n \to \infty$ also.
	\end{prop}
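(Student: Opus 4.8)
The plan is to exploit the fact that convergence in $\hilbert$ forces the relevant Radon--Nikodym derivatives to converge to the constant $1$ in $L^\infty$, which then controls \emph{any} $f$-divergence uniformly. Recall that for a convex $f:(0,\infty)\to\R$ with $f(1)=0$, and for comparable measures, $D_f(\mu_n\|\mu) = \int_E f\big(\tfrac{\di\mu_n}{\di\mu}\big)\di\mu$; comparability guarantees the mutual absolute continuity needed for this to be well defined.

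First I would set $r_n := \hilbert(\mu_n,\mu)$ and note that, by hypothesis, $r_n \to 0$; in particular $r_n<\infty$ for $n$ large, so $\mu_n \comp \mu$ and the derivatives $\tfrac{\di\mu_n}{\di\mu},\tfrac{\di\mu}{\di\mu_n}$ are essentially bounded. From \eqref{eq:hilbert_indicator_derivatives} we have $\big\|\tfrac{\di\mu_n}{\di\mu}\big\|_\infty\big\|\tfrac{\di\mu}{\di\mu_n}\big\|_\infty = e^{r_n}$, while, as already observed above \eqref{eq:atar_zeitouni_bound}, each of these $L^\infty$-norms is at least $1$ since $\mu_n,\mu\in\mathcal{P}(E)$. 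Hence each norm is squeezed between $1$ and $e^{r_n}$, and so $\mu$-almost everywhere
\[
	e^{-r_n} \le \frac{\di\mu_n}{\di\mu} \le e^{r_n},
\]
with the analogous bound holding $\mu_n$-a.e.\ for $\tfrac{\di\mu}{\di\mu_n}$ by symmetry of $\hilbert$.

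The key step, which is exactly where the strength of the Hilbert metric enters, is that the density $\tfrac{\di\mu_n}{\di\mu}$ is confined $\mu$-a.e.\ to the compact interval $I_n := [e^{-r_n},e^{r_n}]\subset(0,\infty)$, which contracts to $\{1\}$ as $n\to\infty$. Since $f$ is convex on $(0,\infty)$ it is continuous there, and $f(1)=0$; therefore $M_n := \sup_{t\in I_n}|f(t)| \to 0$ by continuity of $f$ at $1$. Consequently $|D_f(\mu_n\|\mu)| \le \int_E \big|f\big(\tfrac{\di\mu_n}{\di\mu}\big)\big|\di\mu \le M_n\,\mu(E) = M_n \to 0$, and applying the same reasoning to $\tfrac{\di\mu}{\di\mu_n}\in I_n$ gives $D_f(\mu\|\mu_n)\to 0$.

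The only genuine obstacle is the possible blow-up of a convex $f$ near the endpoints of its domain: the argument of $f$ approaching $0$ or $\infty$ is precisely what makes $f$-divergences discontinuous under weaker modes of convergence (such as total variation). Comparability—equivalently, convergence in $\hilbert$—removes this difficulty by bounding the density uniformly away from $0$ and $\infty$, trapping it in a compact subinterval of $(0,\infty)$ on which $f$ is automatically bounded and continuous, so no separate integrability or truncation/approximation argument is required.
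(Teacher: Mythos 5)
Your proof is correct and follows essentially the same route as the paper: both arguments squeeze $\tfrac{\di\mu_n}{\di\mu}$ into the interval $[e^{-\hilbert(\mu_n,\mu)},e^{\hilbert(\mu_n,\mu)}]$ using \eqref{eq:hilbert_indicator_derivatives} together with the fact that both $L^\infty$-norms are at least $1$, and then conclude from the behaviour of $f$ near $1$. The only cosmetic difference is that the paper first normalizes $f$ by a linear term $c(u-1)$ with $c\in-\partial f(1)$ so that the supremum of $f$ over this interval is attained at its endpoints, whereas you directly invoke continuity of the finite convex function on $(0,\infty)$ to get $\sup_{t\in I_n}|f(t)|\to 0$; both steps are valid.
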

	\begin{proof}
		Let $f$ be a convex function of the form specified in Definition~\ref{defn:f_divergence}, and let $D_f(\mu \| \nu)$ be the associated $f$-divergence of $\mu$ from $\nu$, where $\mu,\nu \in \mathcal{P}(E)$ and $\mu \ll \nu$. Note that $D_f$ is unchanged if we add a linear term to $f$, i.e. let $\bar f(u) = f(u) + c(u-1)$, then $D_{\bar f}(\mu \| \nu) = D_{f}(\mu \| \nu)$. Moreover, by taking $c \in - \partial f(1)$ (where by $\partial$ we denote the subgradient of $f$), we have $0 \in \partial \bar f(1)$, so without loss of generality we can restrict our attention to convex functions $f$ such that $f(1) = 0 \in \partial f(1)$.
		
		Consider a sequence $\{ \mu_n \} \in \mathcal{P}(E)$ such that $\lim_{n \to \infty} \hilbert(\mu_n, \mu) = 0$. Then there exists $N>0$ such that for all $n\ge N$, $\mu_n \sim \nu$. Since $f$ must be decreasing for $x<1$ and increasing for $x>1$ by virtue of being convex, we have, for all $n \ge N$,
		\begin{align*}
			D_{f}(\mu_n \| \mu)
			&= \int_{\big\{\frac{\di \mu_n}{\di \mu} \le 1\big\}} f \Big( \frac{\di \mu_n}{\di \mu}   \Big) \di \mu + \int_{\big\{\frac{\di \mu_n}{\di \mu} > 1\big\}} f \Big( \frac{\di \mu_n}{\di \mu}   \Big) \di \mu \\
			&\le f \Big(\essinf_{x \in E} \frac{\di \mu_n}{\di \mu}\Big) \mu \big( \big\{ \tfrac{\di \mu_n}{\di \mu} \le 1 \big\} \big) +
			f \Big(\esssup_{x \in E} \frac{\di \mu_n}{\di \mu}\Big) \mu \big( \big\{ \tfrac{\di \mu_n}{\di \mu} > 1 \big\} \big) \\
			&\le \max \bigg\{ f \Big( \Big\| \frac{\di \mu}{\di \mu_n}  \Big\|^{-1}_{\infty} \Big), \, f \Big( \Big\| \frac{\di \mu_n}{\di \mu}  \Big\|_{\infty} \Big) \bigg\}
			\le \max \Big\{ f \big( e^{- \hilbert (\mu_n,\mu)} \big), \, f \big(  e^{\hilbert (\mu_n,\mu)}  \big) \Big\},
		\end{align*}
		where we have used that $\big\| \tfrac{\di \mu}{\di \mu_n}  \big\|_{\infty}, \big\| \tfrac{\di \mu_n}{\di \mu}  \big\|_{\infty} \ge 1$, and $\big\| \tfrac{\di \mu}{\di \mu_n}  \big\|^{-1}_{\infty} \ge e^{-\hilbert(\mu_n,\mu)}$ and $\big\| \tfrac{\di \mu_n}{\di \mu}  \big\|_{\infty} \le e^{\hilbert(\mu_n,\mu)}$. Since $f(1) = 0$ by assumption, the right-hand side of the above goes to 0 as $\hilbert (\mu_n, \mu) \to 0$, so $D_{f}(\mu_n \| \mu)$ converges. The argument for $D_{f}(\mu \| \mu_n)$ is analogous by symmetry, and we are done.
	\end{proof}
	
	\section{Hilbert projective geometry on the probability simplex}\label{sec:geometry}
	
	In this section we consider the Hilbert metric on the probability measures with finite state-space $E \cong \{0,\cdots,n\}$, which form the \textit{probability simplex}. In this case, the form of the Hilbert metric simplifies, and there exists an explicit expression for Birkhoff's contraction coefficient (see Section~4 of \cite[Chapter~3]{seneta_matrices}). We briefly remark on this, and present a short derivation of Birkhoff's coefficient using duality. Then we move on to studying the geometry of the probability simplex under the Hilbert projective metric: using a coordinate transformation inspired by information geometry \cite{amari85,amari16}, we describe the Hilbert balls as convex polytopes in the probability simplex, in an extension of the work in \cite{phadke_hexagons75} to the $n$-dimensional case.
	
	\vspace{5pt}
	
	Let  $E\cong \{0,\cdots, n\}$. The probability measures $\mathcal{P}(E)$ are represented by the set
	\begin{equation*}
		\mathcal{P}(E) \cong \mathcal{S}^n = \bigg\{  x \in \R^{n+1} \, : \, \sum_{i=0}^n x_i = 1 \bigg\} \subset \R^{n+1},
	\end{equation*}
	and $\mathcal{S}^n$ is called the \textit{$n$-dimensional probability simplex}. It is given by the intersection of the convex cone of non-negative vectors $\R_{+}^{n+1}$ with the plane $\sum_{i} x_i=1$. 
	
	Consider the Hilbert distance on the non-negative orthant $C = \R_+^{n}$. Recall the duality expression for $\hilbert$ given in \eqref{eq:hilbert_dual} and the equality \eqref{eq:dual_beta_equiv}. The dual cone $C^{\ast}$ is again $\R^{n}_+$. Take $x,y \in \R_{+}^{n} \setminus \{0\}$ such that $\beta(x,y) <\infty$. Note that $\beta(x,y) <\infty$ if and only if there exists a scalar $ b >0$ such that $y^i \le b x^i$ for all $i = 1, \dots, n$, which in particular implies that $x^i > 0$ whenever $y^i > 0$. Hence we have
	\begin{equation}\label{eq:beta_max_finite_dim}
		\beta(x,y)
		= \sup_{\substack{w \in \R_{+}^{n}\setminus\{0\} \\ w^\intercal x > 0}} \frac{w^\intercal y}{w^\intercal x} 
		= \sup_{r \in [0,1]^{n+1}\setminus\{0\}} \sum_{j:\, x^j > 0} r^j \frac{y^j}{x^j}
		=\max_{j: \, x^j >0} \frac{y^j}{x^j}
		= \max_{j: \, e_j^\intercal x >0} \frac{e_j^\intercal y}{e_j^\intercal x},
	\end{equation}
	where the second equality follows by setting $r^j = w^j x^j / w^\intercal x$, $0 \le r^j \le 1$ for all $j = 1, \dots, n$ and at least one $r^j > 0$, and $\{e_j\}_{j=1}^n$ are the basis vectors of $\R_{+}^{n}$. So $\sup_w w^\intercal y/w^\intercal x$ is attained when $w$ is a basis vector. By symmetry, we have that $\beta(y,x) < \infty$ if there exists $b'>0$ such that $x^i \le b' y^i$ for all $i = 1, \dots n$. Then two elements $x,y \in \R_+^{n}$ are \textit{comparable} (denoted again by $\comp$) if there exist constants $a,b > 0$ such that $ax \le y \le bx$, where the inequalities hold component-wise (this is the definition of comparability given in \cite[Chapter XVI]{birkhoff_lattice}). Then the definition \eqref{eq:hilbert_beta} of the Hilbert projective distance for $x,y \in \R^{n}_+$ simplifies to
	\begin{equation}\label{eq:hilbert_metric_finite}
		\hilbert(x, y) = \left\{ \begin{array}{ll}\vspace{2pt}
			\log \left( \frac{\max_{i : y^i > 0} \frac{x^i}{y^i}}{\min_{j : y^j > 0} \frac{x^j}{y^j}} \right),
			& x \comp y, \\
			\infty, & x \ncomp y.
		\end{array} \right.
	\end{equation}
	
	\begin{rmk}
		In this finite-state context, the comparability condition $\comp$ reduces to \textit{equivalence of measures} $\sim$ on $\mathcal{S}^n$. Recall that $(\mathcal{S}^n, \hilbert)$ is a complete metric space by Lemma~\ref{lemma:P_H_complete}.
	\end{rmk}
	
	\subsection{Explicit forms of the Birkhoff's contraction coefficient}
	
	Using \eqref{eq:beta_max_finite_dim} we can now easily derive an explicit expression for Birkhoff's contraction coefficient of a linear operator $\R^{n}_+ \setminus \{0\} \to \R^{n}_+ \setminus\{0\}$. Define a matrix  $A = (A_{ij})$ to be \textit{strictly positive} (resp. \textit{non-negative}) if $A_{ij} > 0$ (resp. $A_{ij} \ge 0$) for all $i,j$; we will sometime use the notation $A > 0$ (resp. $A \ge 0$).  Let $A$ be \textit{allowable} if $A$ is non-negative and if every row and every column of $A$ has at least one strictly positive element (this definition is given by Seneta in \cite[Def.~3.1]{seneta_matrices}). Clearly any linear operator $\R^{n}_+ \setminus \{0\} \to \R^{n}_+ \setminus\{0\}$ can be represented as an allowable $n \times n$ matrix. We prove the following result, which was already stated by Birkhoff without proof in Corollary~2 of \cite[Chapter~XVI,~Section~3]{birkhoff_lattice} and obtained by Seneta in Section~4 of \cite[Chapter~3]{seneta_matrices}, although the derivation there is significantly more involved.
	\begin{prop}\label{prop:contraction_coeff}
		Let $A = (A_{ij})$ be an allowable $n\times n$ matrix. Birkhoff's contraction coefficient $\tau(A)$ can be written as
		\begin{equation}\label{eq:bir_coeff_matrix}
			\tau(A) =\frac{1 - \sqrt{\phi(A)}}{1 + \sqrt{\phi(A)}}, \quad \text{with } \phi(A) = \min_{i,j,k,l} \, \frac{A_{ik}A_{jl}}{A_{jk}A_{il}},
		\end{equation}
		(with the convention that $0/0 = 1$).
	\end{prop}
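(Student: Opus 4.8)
The plan is to compute the Hilbert diameter $\Delta(A) = \sup_{x,y \in \R_+^n \setminus\{0\}} \hilbert(Ax, Ay)$ of the image cone explicitly, and then invoke the identity $\tau(A) = \tanh(\Delta(A)/4)$ from Theorem~\ref{thm:birkhoff}. First I would use the finite-dimensional formula \eqref{eq:beta_max_finite_dim} for $\beta$ to write, whenever $Ax \comp Ay$,
\[
\hilbert(Ax, Ay) = \max_{i,j}\log\frac{(Ay)_i (Ax)_j}{(Ax)_i (Ay)_j} = \max_{i,j}\log\left(\frac{(Ay)_i}{(Ay)_j}\cdot\frac{(Ax)_j}{(Ax)_i}\right),
\]
where $(Ax)_i = \sum_k A_{ik}x^k$. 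The crucial observation is that, once the indices $i,j$ are fixed, this product splits into a factor depending only on $y$ and a factor depending only on $x$, so the two suprema decouple.

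The key step is then to maximise each ratio of linear forms separately. Since $(Ay)_i/(Ay)_j = \big(\sum_l A_{il}y^l\big)/\big(\sum_l A_{jl}y^l\big)$ is a quotient of nonnegative linear forms in $y\in\R_+^n$, it is maximised over the cone at an extreme ray, i.e.\ at a basis vector $y=e_l$, giving $\max_l A_{il}/A_{jl}$; analogously the $x$-factor is maximised at $\max_k A_{jk}/A_{ik}$. Because the supremum over $x,y$ commutes with the finite maximum over $i,j$, this yields
\[
e^{\Delta(A)} = \max_{i,j,k,l}\frac{A_{il}A_{jk}}{A_{jl}A_{ik}}.
\]

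Next I would recognise this quantity as $1/\phi(A)$. The family of values $\{A_{ik}A_{jl}/(A_{jk}A_{il})\}$, taken over all index tuples, is invariant under reciprocation (swapping $i\leftrightarrow j$ inverts each ratio), so its maximum equals the reciprocal of its minimum; after relabelling $k\leftrightarrow l$ this gives $e^{\Delta(A)} = 1/\phi(A)$, i.e.\ $\phi(A) = e^{-\Delta(A)}$. Substituting into $\tau(A) = \tanh(\Delta(A)/4)$ and writing $\tanh$ in exponential form with $e^{-\Delta(A)/4} = \phi(A)^{1/4}$ then collapses directly to $\frac{1-\sqrt{\phi(A)}}{1+\sqrt{\phi(A)}}$, as claimed.

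The main obstacle I expect is the careful handling of the degenerate cases hidden behind the $0/0=1$ convention. When $A$ has zero entries, the images $Ae_k$ and $Ae_l$ of two basis vectors may fail to be comparable, forcing $\hilbert(Ae_k,Ae_l)=\infty$ and hence $\Delta(A)=\infty$; I would need to verify that this occurs precisely when some ratio in $\phi(A)$ has a vanishing numerator against a nonvanishing denominator (so $\phi(A)=0$ and $\tau(A)=1$), and that allowability of $A$ — every row and column carrying a positive entry — guarantees $A(\R_+^n\setminus\{0\})\subseteq\R_+^n\setminus\{0\}$, so that $\Delta(A)$ and the Birkhoff coefficient are well-defined to begin with. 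Checking that the decoupled maximisation still selects basis vectors in the presence of zeros, and that the convention $0/0=1$ correctly neutralises the index patterns that vanish identically, is the only genuinely fiddly part; the algebraic core is the separation identity above.
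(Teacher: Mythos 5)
Your proposal is correct and follows essentially the same route as the paper: both compute $e^{\Delta(A)}$ by applying the dual formula \eqref{eq:beta_max_finite_dim} twice (the paper's reduction of $\sup_{x}\frac{x^\intercal A^\intercal e_j}{x^\intercal A^\intercal e_i}$ to a maximum over basis vectors is exactly your ``quotient of nonnegative linear forms is maximised at an extreme ray'' step), then identify the resulting maximum with $1/\phi(A)$ and substitute into $\tau(A)=\tanh(\Delta(A)/4)$. The paper likewise dispatches the non-strictly-positive case by noting $\Delta(A)=\infty$ forces $\tau(A)=1$, consistent with the $0/0=1$ convention.
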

	\begin{proof}
		Consider the diameter of $\R_{+}^{n}$ under the matrix $A$, i.e.
		\begin{equation*}
			\Delta(A) = \sup_{x,y \in \R_{+}^{n} \setminus \{0\}} \hilbert (Ax, Ay).
		\end{equation*}
		Assume that $\Delta(A) < \infty$, so $\beta(Ax, Ay), \beta(Ay, Ax) < \infty$ for all $x, y \in \R^{n}_+ \setminus \{0\}$. Note that $\Delta(A) < \infty$ if and only if $A$ is strictly positive, so in particular $Ax$ has strictly positive entries for all $x \in \R^{n}_+$, which implies that $w^\intercal A x >0$ for all $x,w \in \R^n_+$. Using \eqref{eq:beta_max_finite_dim} in the second and fourth equalities below, we get 
		\begin{align*}
			e^{\Delta(A)}
			&= \sup_{x,y\in \R_{+}^{n}\setminus\{0\}} \sup_{w,z\in \R_{+}^{n}\setminus\{0\}} \bigg\{ \frac{w^\intercal Ay }{w^\intercal Ax}\frac{z^\intercal Ax }{z^\intercal Ay} \bigg\}
			= \sup_{x,y\in \R_{+}^{n}\setminus\{0\}} \max_{i,j} \: \bigg\{ \frac{e_i^\intercal Ay }{e_i^\intercal Ax}\frac{e_j^\intercal Ax }{e_j^\intercal Ay} \bigg\}  \\
			&= \max_{i, j} \sup_{x,y\in \R_{+}^{n}\setminus\{0\}} \bigg\{ \frac{y^\intercal A^\intercal e_i }{y^\intercal A^\intercal e_j }\frac{x^\intercal A^\intercal e_j }{x^\intercal A^\intercal e_i} \bigg\}
			= \max_{i, j} \max_{k, l} \bigg\{ \frac{e_k^\intercal A^\intercal e_i }{e_k^\intercal A^\intercal e_j }\frac{e_l^\intercal A^\intercal e_j }{e_l^\intercal A^\intercal e_i} \bigg\}
			= \max_{i,j,k,l} \, \frac{A_{ik} A_{jl}}{A_{jk} A_{il}},
		\end{align*}
		so finally Birkhoff's contraction coefficient is given by
		\begin{equation}\label{eq:birkhoff_rearrangment_calculation}
			\tau(A)
			= \tanh\left(\frac{\Delta(A)}{4}\right) 
			= \frac{e^{\frac{\Delta(A)}{2}} - 1}{e^{\frac{\Delta(A)}{2}} + 1}
			=\frac{1 - \sqrt{\phi(A)}}{1 + \sqrt{\phi(A)}}, \quad \text{with } \phi(A) = \min_{i,j,k,l} \, \frac{A_{ik}A_{jl}}{A_{jk}A_{il}}.
		\end{equation}
		We can check that if $A$ is not strictly positive, so $\Delta(A) = \infty$, the formula above still holds with the convention that $\tanh(\infty):=1$.
	\end{proof}
	
	In fact, we obtain a similar representation of Birkhoff's contraction coefficient for a class of transition kernels on more general spaces. Let $E$ be a Polish space with Borel $\sigma$-algebra $\mathcal{B}(E)$, and consider a positive kernel $K$ on $\mathcal{B}(E) \times E$. Then there exists an associated positive linear operator (again denoted by $K$) such that $K \, : \, \mathcal{M}_+(E) \to \mathcal{M}_+(E)$ and
	\begin{equation}\label{eq:kernel}
		K \mu(\di a)  = \int_{E} K(\di a, x) \di \mu(x).
	\end{equation}
	In the statement below, we restrict our attention to kernels that have a density with respect to a reference measure $\rho \in \mathcal{M}_+(E)$. In other words, let $K(\di a, x) = \kappa(a,x) \di \rho(a)$ for some positive function $\kappa \, : \, \mathrm{Supp}(\rho) \times E \to \R_+$. Then \eqref{eq:kernel} reduces to
	\begin{equation}\label{eq:kernel_abs_cont}
		K \mu(A)  = \int_{a \in A} \int_{x \in E} \kappa(a,x) \di \mu(x)  \di \rho(a), \quad \forall A \in \mathcal{B}(E),
	\end{equation} 
	(where we have swapped the order of integration using Tonelli's theorem). We then have the following infinite dimensional counterpart to Proposition~\ref{prop:contraction_coeff}.
	
	\begin{prop}\label{prop:bikhoff_coeff_kernel}
		Let $E$ be a Polish space with Borel $\sigma$-algebra $\mathcal{B}(E)$ and reference measure $\rho \in \mathcal{M}_+(E)$ with support $\mathrm{Supp}(\rho) \subset E$. Consider a kernel operator $K:\mathcal{M}_+(E) \to \mathcal{M}_+(E)$ of the form \eqref{eq:kernel_abs_cont} defined by a density
		\begin{equation*}
			\frac{\di (K\mu)}{\di \rho}(a) :=  \int_E \kappa(a,x) \di\mu(x),
		\end{equation*}
		for $\kappa: \mathrm{Supp}(\rho) \times E \to (0,\infty)$. Assume $\kappa$ is a bounded continuous function. Then the Birkhoff coefficient of $K$ is given by
		\begin{equation}\label{eq:birkhoff_coeff_transition}
			\tau(K) =\frac{1 - \sqrt{\phi(K)}}{1 + \sqrt{\phi(K)}}, \quad \text{with}\quad \phi(K) = \inf_{ \substack{ a,b\, \in \, \mathrm{Supp}(\rho) \\ x,y \,\in \, E}} \Big\{\frac{\kappa(a,x)\kappa(b,y)}{\kappa(a,y)\kappa(b,x)} \Big\}.
		\end{equation} 
	\end{prop}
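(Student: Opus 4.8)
The plan is to mirror the proof of Proposition~\ref{prop:contraction_coeff}: I would first compute the diameter $\Delta(K) = \sup_{\mu,\nu \in \mathcal{M}_+(E)\setminus\{0\}} \hilbert(K\mu, K\nu)$ in the exponential form $e^{\Delta(K)}$, and only then recover $\tau(K) = \tanh(\Delta(K)/4)$ through exactly the same algebraic rearrangement used in \eqref{eq:birkhoff_rearrangment_calculation}. The substantive new work is therefore entirely in evaluating $e^{\Delta(K)}$.

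First I would record that, by \eqref{eq:kernel_abs_cont}, both $K\mu$ and $K\nu$ are absolutely continuous with respect to $\rho$, with densities $g_\mu(a) := \int_E \kappa(a,x)\di\mu(x)$ and $g_\nu(a) := \int_E \kappa(a,x)\di\nu(x)$. Since $\kappa$ is bounded and jointly continuous and $\mu$ is a finite measure, bounded convergence shows that $g_\mu$ is continuous and, because $\kappa>0$ and $\mu\neq 0$, strictly positive on $\mathrm{Supp}(\rho)$; in particular $K\mu \sim K\nu \sim \rho$ on $\mathrm{Supp}(\rho)$, so they are comparable precisely when \eqref{eq:hilbert_indicator_derivatives} applies. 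Using that formula, together with the fact that for a continuous function the essential supremum with respect to $\rho$ coincides with the pointwise supremum over $\mathrm{Supp}(\rho)$, I would write
\[
e^{\hilbert(K\mu,K\nu)} = \Big(\sup_{a\in\mathrm{Supp}(\rho)}\frac{g_\mu(a)}{g_\nu(a)}\Big)\Big(\sup_{b\in\mathrm{Supp}(\rho)}\frac{g_\nu(b)}{g_\mu(b)}\Big).
\]
Taking the supremum over $\mu,\nu$ and separating the (independent) dependence on $\mu$ and on $\nu$, the problem reduces to evaluating $\sup_{\mu}\, g_\mu(a)/g_\mu(b)$ for fixed $a,b\in\mathrm{Supp}(\rho)$. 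This is the infinite-dimensional analogue of the finite-dimensional fact that the supremum over the cone is attained at basis vectors (see \eqref{eq:beta_max_finite_dim}): here the extreme rays of $\mathcal{M}_+(E)$ are the Dirac masses, and the elementary bound $\int \kappa(a,x)\di\mu(x) \le \big(\sup_x \kappa(a,x)/\kappa(b,x)\big)\int \kappa(b,x)\di\mu(x)$, with equality approached along $\mu = \delta_x$, gives $\sup_\mu g_\mu(a)/g_\mu(b) = \sup_{x\in E}\kappa(a,x)/\kappa(b,x)$. Substituting back yields
\[
e^{\Delta(K)} = \sup_{a,b\in\mathrm{Supp}(\rho),\, x,y\in E}\frac{\kappa(a,x)\kappa(b,y)}{\kappa(a,y)\kappa(b,x)}.
\]
Since swapping $a\leftrightarrow b$ inverts the integrand, the supremum of this ratio equals the reciprocal of its infimum, so $e^{\Delta(K)} = 1/\phi(K)$, i.e.\ $\phi(K) = e^{-\Delta(K)}$. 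Plugging this into $\tau(K)=\tanh(\Delta(K)/4) = (e^{\Delta(K)/2}-1)/(e^{\Delta(K)/2}+1)$ reproduces \eqref{eq:birkhoff_coeff_transition} by the identical manipulation as in \eqref{eq:birkhoff_rearrangment_calculation}; the degenerate case $\phi(K)=0$ corresponds to $\Delta(K)=\infty$ and is covered by the convention $\tanh(\infty):=1$.

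I expect the main obstacle to be the measure-theoretic bookkeeping that replaces the finite-dimensional linear algebra: namely, establishing continuity and strict positivity of the densities $g_\mu$, justifying rigorously that the essential supremum with respect to $\rho$ equals the pointwise supremum over $\mathrm{Supp}(\rho)$ (using that $\{g_\mu/g_\nu > c\}$ is open and meets every neighbourhood of a support point in a set of positive $\rho$-measure), and carefully arguing the reduction of the supremum over all positive measures to a supremum over Dirac masses, including the case in which the pointwise supremum over $x$ is not attained, where one approximates along a maximizing sequence of Diracs. Once these points are secured, the remaining algebra is verbatim the finite-dimensional computation.
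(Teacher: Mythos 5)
Your proposal is correct and follows essentially the same route as the paper's proof: compute the diameter via \eqref{eq:hilbert_indicator_derivatives} and the continuity of the densities, reduce the supremum over measures of $g_\mu(a)/g_\mu(b)$ to a supremum over points (the paper does this by reweighting $\mu$ into a probability measure and taking suprema over Dirac masses, which is the same idea as your direct ratio bound), and finish with the algebraic rearrangement of \eqref{eq:birkhoff_rearrangment_calculation}. The only cosmetic difference is that the paper phrases the computation in terms of $\mathcal{T}$ and $\tanh$ throughout rather than via $e^{\Delta(K)}$.
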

	\begin{proof}
		From the structure of the operator, we know that the Radon--Nikodym derivative of $K\mu$ and $K\nu$ (for $\nu \neq 0$) is given by
		\begin{equation*}
			\frac{\di (K\mu)}{\di (K\nu)}(a) = \frac{ \int_E \kappa(a, x) \di\mu(x)}{\int_E \kappa(a, x) \di\nu(x)}, \quad \forall a \in \mathrm{Supp}(\rho).
		\end{equation*}
		As $\kappa$ is continuous and bounded, by dominated convergence we have that $a\mapsto \frac{\di (K\mu)}{\di (K\nu)}(a)$ is continuous, and so the $\rho$-essential supremum of $\frac{\di (K\mu)}{\di (K\nu)}(a)$ is equal to its (pointwise) supremum on $\mathrm{Supp}(\rho)$.
		Using the definition of $\mathcal{T}$ and \eqref{eq:hilbert_indicator_derivatives}, we know that 
		\begin{align*}
			\mathcal{T}(K\mu, K\nu) &= \tanh\bigg(\frac{1}{4} \log \Big(\sup_a\Big\{\frac{ \int_E \kappa(a, x) \di\mu(x)}{\int_E \kappa(a, x) \di\nu(x)}\Big\}\sup_b\Big\{\frac{ \int_E \kappa(b, y) \di\nu(y)}{\int_E \kappa(b, y) \di\mu(y)}\Big\}\Big)\bigg)\\
			&=\sup_{a,b} \bigg\{\tanh\bigg(\frac{1}{4} \log\Big( \frac{\int_E \kappa(a, x) \di\mu(x)}{\int_E \kappa(b, y) \di\mu(y)} \frac{\int_E \kappa(b, y) \di\nu(y)}{\int_E \kappa(a, x) \di\nu(x)}\Big) \bigg)\bigg\}.
		\end{align*}
		From the definition of $\tau$ in Theorem~\ref{thm:tanh_contaction} and monotonicity of $\tanh$, we know that 
		\begin{equation}\label{eq:tau_estimate_operator}
			\begin{split}
				\tau (K) &= \sup_{\mu,\nu\in \mathcal{M}_+(E)} \mathcal{T}(K\mu, K\nu) \\
				&=\sup_{a,b} \bigg\{\tanh\bigg(\frac{1}{4} \log\Big(\sup_{\mu\in \mathcal{M}_+(E)} \Big\{\frac{\int_E \kappa(a, x) \di\mu(x)}{\int_E \kappa(b, y) \di\mu(y)} \Big\}\sup_{\nu\in \mathcal{M}_+(E)}\Big\{\frac{\int_E \kappa(b, y) \di\nu(y)}{\int_E \kappa(a, x) \di\nu(x)}\Big\}\Big) \bigg)\bigg\}.
			\end{split}
		\end{equation}
		
		In order to compute the inner suprema, we observe that for all $\mu \in \mathcal{M}_+(E)$ and $a,b \in E$,
		\[\frac{\int_E \kappa(a, x) \di\mu(x)}{\int_E \kappa(b, y) \di\mu(y)} = \int_E \frac{\kappa(b,x)}{\int_E \kappa(b, y) \di\mu(y)} \frac{\kappa(a,x)}{\kappa(b,x)} \di\mu(x) = \int_E \frac{\kappa(a,x)}{\kappa(b,x)} \di\tilde\mu_b(x)\]
		where $\tilde \mu_b \in \mathcal{P}(E)$ is defined by $\frac{\di \tilde \mu_b}{\di \mu}(x) = \frac{\kappa(b,x)}{\int_E \kappa(b, y) \di\mu(y)}$. Therefore, as $\kappa$ is continuous, 
		\[\sup_{\mu\in \mathcal{M}_+(E)} \frac{\int_E \kappa(a, x) \di\mu(x)}{\int_E \kappa(b, y) \di\mu(y)}= \sup_{\tilde \mu \in \mathcal{P}(E)} \int_E \frac{\kappa(a,x)}{\kappa(b,x)} \di\tilde\mu(x) = \sup_{x\in E}  \frac{\kappa(a,x)}{\kappa(b,x)}. \]
		Substituting in \eqref{eq:tau_estimate_operator} yields
		\begin{equation*}
			\tau(K) =\tanh\Big(\frac{\Delta(K)}{4}\Big), \quad \text{with}\quad \Delta(K) = \sup_{\substack{ a,b\, \in \, \mathrm{Supp}(\rho) \\ x,y \,\in \, E}} \Big\{\log\Big(\frac{\kappa(a,x)\kappa(b,y)}{\kappa(a,y)\kappa(b,x)} \Big)\Big\},
		\end{equation*}
		and essentially the same calculation as \eqref{eq:birkhoff_rearrangment_calculation} gives the form \eqref{eq:birkhoff_coeff_transition}.
	\end{proof}

    In Appendix \ref{app:markov}, we consider in further detail the computation of the Birkhoff contraction coefficient for the generators of finite Markov chains, in discrete and continuous time.
 
	\subsection{Hexagonal polytopes in Hilbert projective geometry}
	
	We now introduce a change of coordinates from the interior of $\mathcal{S}^n$ to $\R^n$ which allows us to build an understanding of the geometry of the probability simplex as a metric space equipped with the Hilbert distance $\hilbert$. We have already employed this coordinate transformation in \cite{cohen_fausti_23} in the setting of nonlinear filtering, and it proved crucial for the analysis of the Hilbert distance between two probability measures $\mu, \nu \in \mathcal{S}^n$.
	
	\begin{notation}
		For simplicity, let $\mathbf{N} := \{ 0, 1, \dots, n \}$ throughout this section.
	\end{notation}
	
	Let $\mathring{\mathcal{S}}^n$ be the interior of the $n$-dimensional probability simplex $\mathcal{S}^n \subset \R^{n+1}$. Following Amari \cite{amari85,amari16}, we map any discrete distribution $\mu \in \mathring{\mathcal{S}}^n$ to its natural parameters $\theta \in \R^n$. In other words, for all $k = 0, \dots, n$, let $\theta_k \, : \, \mathring{\mathcal{S}}^n \to \R^n$ such that
	\begin{equation}\label{eq:theta_transf}
		\theta_k^i(\mu) = \log \frac{\mu^i}{\mu^k}, \quad \forall i \in \mathbf{N}\setminus \{k\}.
	\end{equation}
	Since we exclude the $k^{\mathrm th}$ component, $\theta_k(\mu) = \{\theta^i_k(\mu)\}_{i\neq k}$ is an $n$-dimensional vector. The inverse mapping $\theta_k^{-1} \, : \, \R^n \to \mathring{\mathcal{S}}^n$ is given by
	\begin{equation}\label{eq:theta_transf_inv}
		\mu^i(\theta_k)  = \frac{e^{\theta_k^i}}{\sum_{i=0}^n e^{\theta_k^i}}, \quad \forall i \in \mathbf{N} \quad (\text{where for notational simplicity }\theta_k^k\equiv 0).
	\end{equation}
	Then $\theta_k$ is a diffeomorphism $\mathring{\mathcal{S}}^n \to \R^n$, and a global chart for $\mathring{\mathcal{S}}^n$. Note that we have $n+1$ choices for $k$, so in fact we have a family of $n+1$ coordinate transformations.
	
	As we already noted in \cite[Eq.~19]{cohen_fausti_23}, we have the convenient equivalence
	\begin{equation}\label{eq:theta_inf_and_hilbert}
		\hilbert(\mu,\nu) = \max_{k} \| \theta_k(\mu) - \theta_k(\nu) \|_{\ell^{\infty}} = \max_{i,k} \big( \theta_k^i(\mu) - \theta_k^i(\nu) \big),
	\end{equation}
	where by $\ell^{\infty}$ we denote the standard supremum norm between vectors. 
	
	\begin{rmk}\label{rmk:choice_of_theta_zero}
		It is informative to compare this with Proposition~\ref{prop:hilbert_is_norm_general}. We fix $k=0$ for simplicity, and $\rho$ as the counting measure on $E$. Writing $\mathbf{1} \in \mathbb{R}^n$ for the vector of ones, we then have
		\begin{align*}
			\theta_0(\mu) &= \big( \log(\mu^1/\mu^0),\log(\mu^2/\mu^0),..., \log(\mu^n/\mu^0) \big) \\
			&= \big( \log(\mu^1),\log(\mu^2),..., \log(\mu^n)\big)  - \log(\mu^0)\mathbf{1}.
		\end{align*}
		On the other hand, in the notation of Proposition~\ref{prop:hilbert_is_norm_general}, we have the equivalence class
		\[\theta(\mu) = \Big\{ \big( \log(\mu^0), \log(\mu^1),..., \log(\mu^n) \big) + c\mathbf{1}; \; c\in \mathbb{R}\Big\} \in \Theta_\rho \cong \mathbb{R}^{n+1}\big/\sim_{\mathrm{const}}.\]
		Of course, we can identify $\theta_0(\mu)$ with $\big(0,\theta_0(\mu) \big) \in \theta(\mu)$. Therefore, we see that our $\theta_0$-coordinates \eqref{eq:theta_transf} simply choose the representative element in $\theta(\mu)$ with $c=-\log(\mu^0)$, or equivalently, the (unique) element with $0$ in the first entry. Consequently, Proposition~\ref{prop:hilbert_is_norm_general} gives the representation of the metric (in $\theta_0$-coordinates)
		\[\hilbert(\mu,\nu) = \Big(\max_{i\neq 0}\big\{\theta_0^i(\mu) - \theta_0^i(\nu)\big\}\Big)^+ + \Big(\min_{i\neq 0}\big\{\theta_0^i(\mu) - \theta_0^i(\nu)\big\}\Big)^-,\]
		with $x^+ = \max\{0,x\}$ and $x^- = \max\{0, -x\}$. As this representation shows the Hilbert metric is given by a norm in $\theta_k$-coordinates (for any $k$), we know that translation in $\theta_k$-coordinates will not change the size or shape of a ball.
	\end{rmk}
	
	The $\theta_k$-coordinates allow us to investigate in detail the shape of Hilbert balls in $\mathcal{S}^n$. The main idea is as follows: let $\mu,\nu \in \mathring{\mathcal{S}}^n$, so $\mu$ and $\nu$ are equivalent, and for all $k \in \mathbf{N}$ consider the transformations $\mu \mapsto \theta_k(\mu)$ and $\nu \mapsto \theta_k(\nu)$. Let $\mathcal{H}(\mu, \nu) = R < \infty$. Fixing $\nu$ and $k=0$, we prove that the $\hilbert$-ball of radius $R$ around $\theta_{0}(\nu)$ is a convex polytope $\mathcal C \subset \R^n$. Mapping $\mathcal C$ to the simplex $\mathcal{S}^n$, we find that the image of $\mathcal C$ through the inverse transformation $\theta_0^{-1}$ is also a convex polytope.
	
	We start with the following lemma.
	\begin{lemma}\label{lemma:polytope_Rn}
		Consider two probability measures $\mu, \nu \in \mathring{\mathcal S}^n$ such that $\mathcal{H}(\mu,\nu) = R > 0$. Let $\theta_0(\mu), \theta_0(\nu) \in \R^n$ be their natural parameters under the mapping $\theta_0$ given by \eqref{eq:theta_transf}. Then $\theta_0(\mu)$ belongs to the boundary $\partial \mathcal C$ of an $n$-dimensional convex polytope $\mathcal{C} \subset \R^n$ centred at $\theta_0(\nu)$, with $2(2^n -1)$ vertices at the points
		\begin{equation}\label{eq:vertices_of_C}
			v_{\mathcal I}^+ := \theta_0(\nu) + R \sum_{i \in \mathcal I} e_{i}, \quad
			v_{\mathcal I}^- := \theta_0(\nu) - R \sum_{i \in \mathcal I} e_{i},
		\end{equation}
		where $\{e_i\}_{i=1}^n$ denote the basis vectors of $\R^n$ and $\mathcal I \subseteq \{1, 2, \dots, n \}$, $\mathcal I \neq \emptyset$.
	\end{lemma}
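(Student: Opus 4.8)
The plan is to pass to the $\theta_0$-coordinates of \eqref{eq:theta_transf} and recognise the Hilbert ball as a sublevel set of a polyhedral norm, whose vertices can then be read off from the active linear constraints. First I would rewrite the distance: setting $d^j := \theta_0^j(\mu) - \theta_0^j(\nu)$ for $j \in N$ under the convention $\theta_0^0 \equiv 0$ (so $d^0 = 0$), the identity $\theta_k^i = \theta_0^i - \theta_0^k$ together with \eqref{eq:theta_inf_and_hilbert} gives
\[
\hilbert(\mu,\nu) = \max_{i,k \in N}\big(d^i - d^k\big) = \max_{i\in N} d^i - \min_{k\in N} d^k =: F\big(\theta_0(\mu) - \theta_0(\nu)\big),
\]
where for $y \in \R^n$ I write $F(y) = \max_{i\in N} y_i - \min_{k\in N} y_k$ under the convention $y_0 = 0$ (exactly the representation recorded in Remark~\ref{rmk:choice_of_theta_zero}). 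Thus $\mathcal{C} = \{p \in \R^n : F(p - \theta_0(\nu)) \le R\}$ is the Hilbert ball of radius $R$, and since $F$ depends only on the displacement, $\mathcal{C} = \theta_0(\nu) + B_R$ with $B_R = \{y : F(y) \le R\}$. It therefore suffices to analyse $B_R$ and translate the conclusion by $\theta_0(\nu)$ at the end; as $F(\theta_0(\mu) - \theta_0(\nu)) = R$, the point $\theta_0(\mu)$ indeed lies on $\partial\mathcal{C}$.

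Next I would exhibit $B_R$ as a polytope. The key observation is that $F(y) \le R$ is equivalent to the finite system $y_i - y_k \le R$ for all $i,k \in N$; spelling out the cases $k=0$ and $i=0$ (recall $y_0 = 0$), this reads $y_i \le R$ and $-y_i \le R$ for $i = 1,\dots,n$, together with $y_i - y_k \le R$ for distinct $i,k \in \{1,\dots,n\}$. Hence $B_R$ is a bounded (each $|y_i| \le R$) intersection of finitely many half-spaces, so it is a convex polytope, centrally symmetric because $F$ is even and positively homogeneous.

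Finally I would identify the vertices through the active-constraint criterion: $y \in \partial B_R$ is a vertex exactly when the normals of the constraints active at $y$ span $\R^n$. Writing $M = \max_{i\in N} y_i$, $m = \min_{k \in N} y_k$ (so $M - m = R > 0$) and the top/bottom index sets $P = \{i \in N: y_i = M\}$, $Q = \{k\in N : y_k = m\}$, a constraint $y_i - y_k \le R$ is active iff $(i,k) \in P \times Q$, with normal $e_i - e_k$ under the convention $e_0 = 0$. The crux is a short linear-algebra computation. If $0 \notin P \cup Q$ these normals lie in the sum-zero subspace spanned by $\{e_j : j \in P\cup Q\}$, of dimension at most $|P\cup Q| - 1 \le n-1$, so $y$ is not a vertex; whereas if $0 \in Q$ (the symmetric case $0 \in P$ follows from $y \mapsto -y$), then $m = 0$, $M = R$, and the normals $e_i - e_0 = e_i$ $(i \in P)$ together with the $e_i - e_k$ force $\mathrm{span}\{e_j : j \in (P\cup Q)\setminus\{0\}\}$, which equals $\R^n$ iff $P \cup Q = N$. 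In that case $y$ has $y_i = R$ on $P$ and $y_k = 0$ off $P$, i.e.\ $y = R\sum_{i \in P} e_i$ with $\emptyset \neq P = \mathcal I \subseteq \{1,\dots,n\}$, recovering $v_{\mathcal I}^+$ (and $v_{\mathcal I}^-$ from the symmetric case). Counting nonempty subsets then gives exactly the $2(2^n-1)$ vertices of \eqref{eq:vertices_of_C} after translation by $\theta_0(\nu)$.

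The main obstacle I anticipate is the vertex count, i.e.\ establishing \emph{completeness} rather than merely checking that each listed point is a vertex. The spanning argument for the active normals, carried out with the bookkeeping convention $e_0 = 0$, is precisely what rules out all other boundary points — in particular those for which $0$ lies strictly between the extreme coordinate values — and pins the list down to exactly the claimed $2(2^n-1)$ points.
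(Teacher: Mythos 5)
Your proof is correct, but it takes a genuinely different route from the paper's. You recognise the ball as the sublevel set $\{y: F(y)\le R\}$ of the polyhedral seminorm $F(y)=\max_{i\in N}y_i-\min_{k\in N}y_k$ (with $y_0\equiv 0$), and then classify vertices by the standard active-constraint criterion: the active inequalities at a boundary point are exactly those indexed by $P\times Q$ (argmax times argmin), and their normals $e_i-e_k$ span $\R^n$ precisely when $0\in P\cup Q$ and $P\cup Q=N$, which forces $y=\pm R\sum_{i\in\mathcal I}e_i$. The paper instead works combinatorially: it writes the ball as an intersection of the cube $\mathcal C_0^n$ with the half-spaces $\mathdutchcal p_{i,k}^{\pm}$, looks for points where $n$ of the bounding hyperplanes meet uniquely, and uses a ``linked classes / base case'' argument to show any such point has coordinates in $R\Z$, then prunes to $\{-1,0,1\}$-multiples of $R$ of a single sign using membership in the polytope. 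Your approach buys a cleaner completeness argument: identifying the active set as $P\times Q$ and computing its span in one linear-algebra step simultaneously rules out all non-vertex boundary points and certifies each listed point as a vertex, whereas the paper's converse direction (``it is easy to construct a system $S$\dots'') is left somewhat implicit. Two small points you should make explicit for a polished write-up: $B_R$ contains a neighbourhood of the origin (since $F(0)=0<R$ and $F$ is continuous), so $\mathcal C$ is genuinely $n$-dimensional; and $P\cap Q=\emptyset$ because $M-m=R>0$, which is what makes your three cases ($0\notin P\cup Q$, $0\in Q$, $0\in P$) exhaustive and disjoint.
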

	
	\begin{proof}
		The first thing we do, to simplify our calculations, is to translate $\theta_0(\nu) \in \R^n$ to the origin $\mathbf 0$. Now let $\theta_k^i := \log (\mu^i / \mu^k) \in \R$ for all $i,k = 0, \dots, n$. We fix the coordinate system in $\R^n$ to be given by $(x^1, \dots, x^n) \equiv (\theta_0^1, \dots, \theta_0^n)$, so that the basis vectors $e_i$ are the unit vectors in the $\theta_0^i$-direction. We look for a representation of the $\hilbert$-ball of radius $R$ around the origin in this coordinate system.
		
		By \eqref{eq:theta_inf_and_hilbert}, clearly $| \theta_k^i| \le R$ for all pairs $(i,k) \in \mathbf{N} \times \mathbf{N}$. We  consider all these inequalities, noting that, since $| \theta_k^i| = | \theta_i^k|$ by properties of $\log$, we can avoid needless repetitions by restricting our consideration to all pairs of indices $(i,k) \in I^n : = \{ (i,k) \in \mathbf{N} \times \mathbf{N}  \, : \, i > k \}$. Then we have in total $n(n+1)/2$ unique inequalities. Recalling that $\theta^i_k = \theta^i_0 - \theta^k_0$, for $(i,k) \in I^n$ we define the $(n-1)$-dimensional hyperplanes 
		\begin{equation}\label{eq:hyperplanes}
			\mathdutchcal{h}_{i,k}^{\pm} := 
			\left\{
			\begin{array}{ll}
				\{ (x^1, \dots, x^n) \in \R^n \, : \, x^i - x^k = \pm R  \} &\quad k \neq 0, \\
				\{ (x^1, \dots, x^n) \in \R^n \, : \, x^i = \pm R  \} &\quad k = 0,
			\end{array}
			\right.
		\end{equation}
		and denote by $\mathdutchcal p^+_{i,k} := \{ x \in \R^n \, : \, x^i - x^k \le R  \}$ the half-spaces bounded by $\mathdutchcal{h}_{i,k}^{+}$, and similarly by $\mathdutchcal p^-_{i,k} := \{ x \in \R^n \, : \, x^i - x^k \ge - R  \}$ those bounded by $\mathdutchcal{h}_{i,k}^{-}$ (and equivalently when $k = 0$).
		
		We let
		$
		\mathcal C^n = \bigcap_{(i,k) \in I^n} \mathdutchcal p^+_{i,k} \cap \mathdutchcal p^-_{i,k}.
		$
		By standard results in $n$-dimensional geometry, $\mathcal C^n \subset \R^n$ is a polyhedron, since it is the intersection of a finite number of closed half-spaces. We claim $\mathcal C^n$ is bounded.
		
		Note that the intersection
		$
		\mathcal C_0^n = \bigcap_{i = 1}^n \mathdutchcal p^+_{i,0} \cap \mathdutchcal p^-_{i,0}
		$
		is the $n$-cube with side-length $2R$ centred at $\mathbf 0$ with $2^n$ vertices at all possible positive/negative combinations of the coordinates $( \pm R, \dots, \pm R)$. Then
		\begin{equation}\label{eq:intersection_cube_half_spaces}
			\mathcal C^n = \bigcap_{\substack{(i,k) \in I^n \\ k \neq 0}} \mathdutchcal p^+_{i,k} \cap \mathdutchcal p^-_{i,k} \cap \mathcal C_0^n,
		\end{equation}
		and the intersection of a hypercube with closed half-spaces is bounded, so $\mathcal C^n$ is a bounded polyhedron, and therefore a convex polytope. (Note that $\mathcal C^n$ is non-empty, since one can easily check that $\mathbf 0 \in \mathcal C^n$.)
		
		We now would like to find the vertices of $\mathcal C^n$. We look for all the points in $\R^n$ where exactly $n$ of the hyperplanes \eqref{eq:hyperplanes} intersect uniquely.
		
		Consider a linear system $S$ given by $n$ equations from \eqref{eq:hyperplanes}. We say indices $i,j$ are \textit{linked} if an equation of the form $x^i - x^j = \pm R$ appears in the system $S$, and extend this definition by transitivity to partition the indices appearing in $S$ into \textit{linked classes}. We say an index $i$ is a \textit{base case} if $x_i = \pm R$ appears in $S$.
		\begin{enumerate}
			\item Consider a class not containing a base case. Then we can add a constant $r \in \R$ to each component $x^i$ in the class without altering the equations of the form $x^i - x^j = \pm R$. Therefore the subsystem of $S$ containing all the equations for this class cannot have a unique solution, so $S$ cannot give a vertex.
			\item For any class containing a base case, let's say $x^i$, note that $(x^i - x^j)/R \in \Z$ for any $x^j$ linked to $x^i$. By transitivity and additive closure of $\Z$, we observe that all indices in a class containing a base case must have $x^j/R \in \Z$.
		\end{enumerate}
		By combining the above observations, all vertices of $\mathcal{C}^n$ must have coordinates that are integer multiples of $R$, with at least one coordinate given by a base-case, i.e. any point $v \in \R^n$ that solves uniquely $S$ must be of the form $(m_1 R, \dots, m_n R)$ for $m_i \in \Z$ with $-n \le m_i \le n$ for all $i = 1, \dots, n$, and at least one $m_i \in \{\pm 1\}$.
		
		Now, by \eqref{eq:intersection_cube_half_spaces} we must have that all the vertices of $\mathcal{C}^n$ belong to $\mathcal{C}^n_0$. Thus, any point $v \in \R^n$ that uniquely solves $S$ and is potentially a vertex of $\mathcal C^n$ must be of the form $(m_1 R, \dots, m_n R)$ with $m_i \in \{-1, 0, 1\}$ for all $i = 1, \dots, n$. Moreover, assume that $m_i = 1$ and $m_k = -1$ for $i > k$. Then $m_i R - m_k R = 2R \ge R$, so $v \notin \mathdutchcal p_{i,k}^+$ and $v \notin \mathcal C^n$. Similarly, if $m_i = -1$ and $m_k = 1$ for $i > k$, then $m_i R - m_k R \le -2R$ so $v \notin p_{i,k}^-$ and $v \notin \mathcal C^n$.
		
		Thus we must have that any vertex of $\mathcal C^n$ is of the form $(m_1 R, \dots, m_n R)$ with either $m_i \in \{-1, 0\}$ for all $i = 1, \dots, n$ or $m_i \in \{ 0, 1\}$ for all $i = 1, \dots, n$. Conversely, consider any point $v \in \R^n$ of this form. It is easy to construct a system $S$ for a choice of $n$ equations in $\{x^i - x^k = \pm R\} \cup \{x^j = \pm R\}$ such that $v$ solves $S$. Then the points $(m_1 R, \dots, m_n R) \in \R^n$ with either $m_i \in \{-1, 0\}$ for all $i = 1, \dots, n$ or $m_i \in \{ 0, 1\}$ for all $i = 1, \dots, n$ (and not all $m_i$ identically 0) are in fact all the vertices of $\mathcal C^n$.
		Letting $\mathcal C = \mathcal C^n + \theta_0(\nu)$, we are done.
	\end{proof}
	\begin{rmk}
		As can be deduced by \eqref{eq:theta_inf_and_hilbert}, the $\hilbert$-ball is, in a way, nothing but the intersection of $n+1$ skewed $\ell_{\infty}$-balls, or, geometrically speaking, the intersection of $n+1$ skewed hypercubes. Recall the notation from our proof above. Define the following intersections
		\begin{equation*}
			\mathcal C_k^n := \Big[ \bigcap_{j = k+1}^n \mathdutchcal p^+_{j,k} \cap \mathdutchcal p^-_{j,k} \Big] \cap \Big[ \bigcap_{j = 0}^{k-1} \mathdutchcal p^+_{k,j} \cap \mathdutchcal p^-_{k,j} \Big], \quad \forall k=1, \dots, n.
		\end{equation*}
		Then for each $k = 1, \dots, n$, $\mathcal C_k^n$ is the image of the hypercube centred at $ \theta_k(\nu)$ with side-length $2R$ under the linear transformation $\theta_k \mapsto \theta_0$.
	\end{rmk}
	
	We now map our convex polytope from $\R^n$ to $S^n$ through the inverse transformation $\theta_0^{-1}$ given by \eqref{eq:theta_transf_inv}. Note that if $\theta_0^{-1}$ were an affine transformation, then Lemma \ref{lemma:polytope_Sn} stated below would be trivially true. However, as $\theta_0^{-1}$ is not affine, a bit more work is required to prove that convexity, linearity of the boundary, and intersections are preserved. We give a depiction of this result in Figure~\ref{fig:hilbert_ball_3d}.
	\begin{lemma}\label{lemma:polytope_Sn}
		The $\hilbert$-ball of radius $R$ around $\theta_0(\nu)$, given by the convex polytope $\mathcal C$ of Lemma 4.2, maps to a convex polytope $\mathcal D$ centred at $\nu$ in $\mathring{\mathcal{S}}^n$ under the inverse transformation $\theta_0^{-1}: \R^n \rightarrow \mathring{\mathcal{S}}^n$. The vertices of $\mathcal D$ are given by the images of the vertices of $\mathcal C$ under the same transformation.
	\end{lemma}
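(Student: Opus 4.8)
The plan is to exploit the one structural fact that makes the nonlinearity of $\theta_0^{-1}$ harmless: although the softmax map \eqref{eq:theta_transf_inv} is not affine, it sends each of the \emph{particular} bounding hyperplanes of $\mathcal C$ to a genuinely \emph{linear} hyperplane of $\R^{n+1}$ once restricted to the simplex. Recall from the proof of Lemma~\ref{lemma:polytope_Rn} that $\mathcal C$ is the intersection, over the index set $I^n$, of the closed half-spaces $\mathdutchcal p_{i,k}^{\pm}$ bounded by the hyperplanes $\mathdutchcal h_{i,k}^{\pm}$, each of which has the form $\theta^i - \theta^k = \text{const}$ (with the convention $\theta^0\equiv 0$, so the $k=0$ planes are $\theta^i = \text{const}$). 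Since $\theta_0^i - \theta_0^k = \log(\mu^i/\mu^k)$ under \eqref{eq:theta_transf}, such a plane reads $\log(\mu^i/\mu^k) = \text{const}$, i.e.\ $\mu^i = c\,\mu^k$ for a positive constant $c$ (explicitly $c = e^{\pm R}\,\nu^i/\nu^k$), which is a linear hyperplane $H_{i,k}^{\pm}$ in $\R^{n+1}$; the half-space $\mathdutchcal p_{i,k}^{\pm}$ likewise pulls back to a closed linear half-space $P_{i,k}^{\pm}$. Thus $\theta_0^{-1}(\mathdutchcal h_{i,k}^{\pm}) = H_{i,k}^{\pm}\cap\mathring{\mathcal S}^n$ and $\theta_0^{-1}(\mathdutchcal p_{i,k}^{\pm}) = P_{i,k}^{\pm}\cap\mathring{\mathcal S}^n$.

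First I would assemble $\mathcal D$. As $\theta_0^{-1}$ is a bijection of $\R^n$ onto $\mathring{\mathcal S}^n$ it preserves intersections, so
\[
\mathcal D = \theta_0^{-1}(\mathcal C) = \bigcap_{(i,k)\in I^n}\big(\theta_0^{-1}(\mathdutchcal p_{i,k}^{+})\cap\theta_0^{-1}(\mathdutchcal p_{i,k}^{-})\big) = \mathring{\mathcal S}^n\cap\bigcap_{(i,k)\in I^n}\big(P_{i,k}^{+}\cap P_{i,k}^{-}\big).
\]
I would then note that the linear constraints by themselves force interiority: any $\mu$ in the affine plane $\{\sum_{j}\mu^j = 1\}$ satisfying the $k=0$ inequalities $c_i^{-}\mu^0\le\mu^i\le c_i^{+}\mu^0$ (with $c_i^{\pm}>0$) must have $\mu^0>0$, since $\mu^0\le 0$ forces all coordinates non-positive and contradicts $\sum_{j}\mu^j=1$; hence all coordinates are strictly positive. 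Therefore $\mathcal D$ equals the intersection of the closed simplex $\mathcal S^n$ with the finitely many half-spaces $P_{i,k}^{\pm}$, a bounded intersection of finitely many closed half-spaces and so a convex polytope, with centre $\nu = \theta_0^{-1}(\theta_0(\nu))$ inherited from the centre of $\mathcal C$.

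It remains to match vertices, for which I would use the characterisation that a point of the (full-dimensional) polytope $\mathcal D$ in the $n$-plane of the simplex is a vertex iff the bounding hyperplanes active there intersect in that single point. If $v$ is a vertex of $\mathcal C$, its active planes $\{\mathdutchcal h_\alpha\}_{\alpha\in A(v)}$ satisfy $\bigcap_\alpha\mathdutchcal h_\alpha=\{v\}$; applying the bijection gives $\bigcap_\alpha\big(H_\alpha\cap\mathring{\mathcal S}^n\big)=\{w\}$ for $w:=\theta_0^{-1}(v)$, so the active linear hyperplanes $H_\alpha$ at $w$ meet the open simplex only at $w$. The small point to check is that their full intersection $F$ (an affine flat inside the simplex plane) is exactly $\{w\}$: if $\dim F\ge 1$, then, as $w$ lies in the relatively open simplex, a neighbourhood of $w$ in $F$ would lie in $\mathring{\mathcal S}^n$, contradicting $F\cap\mathring{\mathcal S}^n=\{w\}$. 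Hence $w$ is a vertex of $\mathcal D$. Running the same argument with $\theta_0$ in place of $\theta_0^{-1}$ shows conversely that every vertex of $\mathcal D$ is the image of a vertex of $\mathcal C$, giving the claimed correspondence.

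The only genuine obstacle is the one flagged in the remark preceding the statement: a smooth bijection need neither preserve convexity nor carry extreme points to extreme points, so nothing here is automatic. The entire proof rests on the first observation — that the log-ratio planes cutting out $\mathcal C$ become linear under softmax — which collapses both the polytope claim and the vertex claim to elementary linear facts about the simplex; the remaining steps (preservation of intersections under a bijection, the interiority forcing, and the flat-dimension argument) are routine.
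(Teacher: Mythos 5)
Your proposal is correct and follows essentially the same route as the paper: both hinge on the observation that the log-ratio hyperplanes $\theta^i-\theta^k=\mathrm{const}$ pull back under $\theta_0^{-1}$ to genuine linear half-spaces $\mu^i \le c\,\mu^k$ in the affine hull of the simplex, so that $\mathcal D$ is a finite intersection of closed half-spaces. The only difference is that you spell out the interiority and vertex-correspondence arguments that the paper dispatches with an appeal to $\theta_0$ being a homeomorphism; these added details are sound.
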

	\begin{proof}
		We use the same notation as in Lemma \ref{lemma:polytope_Rn}. Recall that we centred our $\mathcal H$-ball at $\hat \theta_0 = \theta_0(\nu)$, the image of $\nu$ under the mapping $\theta_0 : \mathcal S^n \rightarrow \R^n$. Similarly, we will fix the centre of $\mathcal D$, the representation of the $\mathcal H$-ball in $\mathcal S^n$, at $\nu$, and proceed as if $\nu$ were known.
		
		We start by noting that the bounds $|\theta^i_k - \hat \theta_k^i| \le R$, which correspond to the linear constraints \eqref{eq:hyperplanes}, are equivalent to linear constraints in $\mathcal{S}^n$. Recalling the notation of the proof of Lemma~\ref{lemma:polytope_Rn}, consider the half-spaces $\mathdutchcal{p}_{i,k}^{+}$ under the transformation $\theta_0^{-1}: \R^n \rightarrow \mathcal S^n$. We compute
		\begin{align*}
			\theta_0^{-1} (\mathdutchcal{p}_{i,k}^{+})
			&=  \{ \theta_0^{-1}(\theta_0) \in \mathcal S^n \, : \, \theta^i_0 - \theta^k_0 \le \hat \theta^i_0 - \hat \theta^k_0 + R  \} \\
			&= \Big\{ \mu \in \mathcal S^n \, : \, \log \frac{\mu^i}{\mu^k} \le \log \frac{\nu^i}{\nu^k} + R  \Big\} 
			= \Big\{ \mu \in \mathcal S^n \, : \, \mu^i - \mu^k \frac{\nu^i}{\nu^k} e^{R} \le 0  \Big\},
		\end{align*}
		where we have used bijectivity of $\theta_0^{-1}$ and the fact that $\exp$ is increasing. Similarly,
		\begin{equation*}
			\theta_0^{-1}(\mathdutchcal{p}_{i,k}^{-}) =  \Big\{ \mu \in \mathcal S^n \, : \, \mu^i - \mu^k \frac{\nu^i}{\nu^k} e^{-R} \ge 0  \Big\}.
		\end{equation*}
		Note that $\theta_0^{-1}(\mathdutchcal{p}_{i,k}^{+})$ and $\theta_0^{-1}(\mathdutchcal{p}_{i,k}^{-})$ are $(n-1)$-dimensional flat subspaces of $\mathcal S^n$. In particular, since $\mathcal S^n$ is a subset of an $n$-dimensional affine space $A \cong \R^n$, we see that $\theta_0^{-1}(\mathdutchcal{p}_{i,k}^{+})$ and $\theta_0^{-1}(\mathdutchcal{p}_{i,k}^{-})$ are closed half-spaces of $A$, bounded by the hyperplanes  $ \mathdutchcal{l}_{i,k}^{+} := \{ \mu \in A \, : \, \mu^i = \mu^k \, \frac{\nu^i}{\nu^k} \, e^{R}  \}$ and $ \mathdutchcal{l}_{i,k}^{-} := \{ \mu \in A \, : \, \mu^i = \mu^k \, \frac{\nu^i}{\nu^k} \, e^{-R}  \}$, which are the images (extended to $A$) of respectively $\mathdutchcal{h}_{i,k}^{+}$ and $\mathdutchcal{h}_{i,k}^{-}$ under $\theta_0^{-1}$. Then, recalling that $I^n : = \{ (i,k) \in \mathbf{N} \times \mathbf{N}  \, : \, i > k \}$, the intersection
		\begin{equation*}
			\mathcal D : = \bigcap_{(i,k) \in I^n} \theta_0^{-1}(\mathdutchcal p^+_{i,k}) \cap \theta_0^{-1}(\mathdutchcal p^-_{i,k})
		\end{equation*}
		is a convex polyhedron in $A$, and in particular, since $\theta_0^{-1}$ is a bijection from $\R^n$ into $\mathring{\mathcal S}^n$,
		\begin{equation*}
			\theta_0^{-1}(\mathcal C ) = \theta_0^{-1} \bigg( \bigcap_{(i,k) \in I^n} \mathdutchcal p^+_{i,k}\cap \mathdutchcal p^-_{i,k} \bigg) = \mathcal D \subset \mathcal S^n.
		\end{equation*}
		
		Finally, both boundedness of $\mathcal D$ in $\mathcal S^n$ (in the sense that $\mathcal D$ is bounded away from $\partial \mathcal S^n$), so that $\mathcal D$ is a convex polytope in $\mathcal S^n$, and the fact that vertices are preserved under the mapping follow easily from $\theta_0$ being an homeomorphism between $\mathring{\mathcal S}^n$ and $\R^n$.
	\end{proof}
	
	\begin{figure}[htp]
		\centering
		\includegraphics[width=0.8\textwidth]{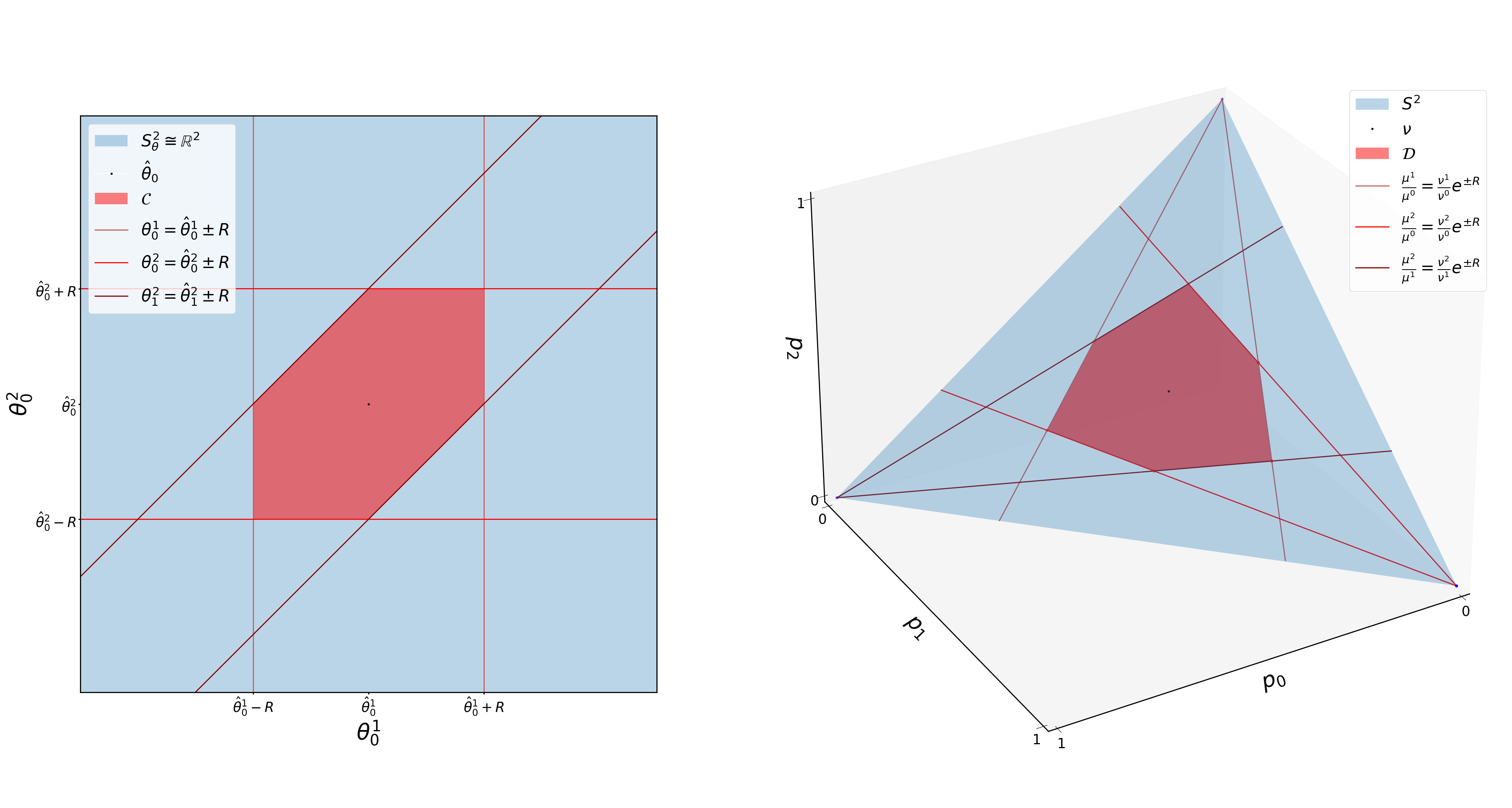}	
		\caption{On the left: representation in $\theta_0$-coordinates of a 2-dimensional $\mathcal{H}$-ball $\mathcal C$ of radius $R$ around $\hat \theta_0 = \theta_0(\nu) = (0,0)$. On the right, the image of $\mathcal C$ under $\theta_0^{-1}$, which gives the $\mathcal{H}$-ball $\mathcal D$ around $\nu = \big(\frac{1}{3}, \frac{1}{3}, \frac{1}{3} \big)$ as a hexagonal polygon in the simplex $\mathcal S^2$.}
		\label{fig:hilbert_ball_3d}
	\end{figure}
	
	\begin{rmk}
		Using Figure~\ref{fig:hilbert_ball_3d}, we can build an intuition of how the transformation $\theta_0^{-1}$ deforms $\mathcal C$ by considering what happens to the parallel pairs of hyperplanes $\mathdutchcal{h}_{i,k}^{+}$ and $\mathdutchcal{h}_{i,k}^{-}$ when mapped into $\mathcal S^n$.
		For each pair $(i,k) \in \mathbf{N} \times \mathbf{N}$, $\mathdutchcal{l}_{i,k}^{+} = \theta_0^{-1} (\mathdutchcal{h}_{i,k}^{+})$ and $\mathdutchcal{l}_{i,k}^{-} = \theta_0^{-1}(\mathdutchcal{h}_{i,k}^{-})$ are not parallel in $\mathcal S^n$, but meet at the $(n-2)$-face of the simplex given by $\mathdutchcal{f}_{i,k} = \{ \mu \in \mathcal S^n \, : \, \mu^i = 0, \mu^k = 0  \}$. In other words, the `point at infinity' at which $\mathdutchcal{h}_{i,k}^{+}$ and $\mathdutchcal{h}_{i,k}^{-}$ meet in $\R^n$ is mapped to the boundary of the simplex, and in particular to $\mathdutchcal{f}_{i,k}$, under $\theta_0^{-1}$.
		
		For example, in dimension 2, $\mathdutchcal{f}_{i,k}$ are vertices of $\mathcal S^2$: when mapping $\mathcal C \in \R^2$ to $\mathcal S^2$, we can think of squeezing together the $\infty$-extremities of each pair of parallel lines $\mathdutchcal{h}_{i,k}^{\pm}$ (for $(i,k) \in \{ (1,0), (2,0), (2,1)\}$) so that they meet at an angle of $\alpha = a(e^R - e^{-R})/(1+a^2)$, where $a = \nu^i/\nu^k$. Then we place the intersection point at the vertex $f_{i,k}$ of $\mathcal S^2$, so that, intuitively, the strip of plane between $\mathdutchcal h_{i,k}^+$ and $\mathdutchcal h_{i,k}^-$ is mapped to a slice of $\mathcal S^2$ of width $\alpha$ bounded by $l_{i,k}^+$ and $l_{i,k}^-$. Then it is easy to visualize how the straight lines that compose the boundary of $\mathcal C$ are mapped to straight lines, and how intersections are preserved, making $\theta_0^{-1} (\mathcal C)$ into a polytope as well. However, these lines (and those parallel to them) are in fact the only straight lines in $\theta_0$-coordinates that map to straight lines in $\mathcal{S}^2$ (as expected, since $\theta_0$ and its inverse are nonlinear). We illustrate this in Figure \ref{fig:lines_deformation} below.	
		
	\end{rmk}
	
	\begin{figure}[htp]
		\centering
		\includegraphics[width=0.7\textwidth]{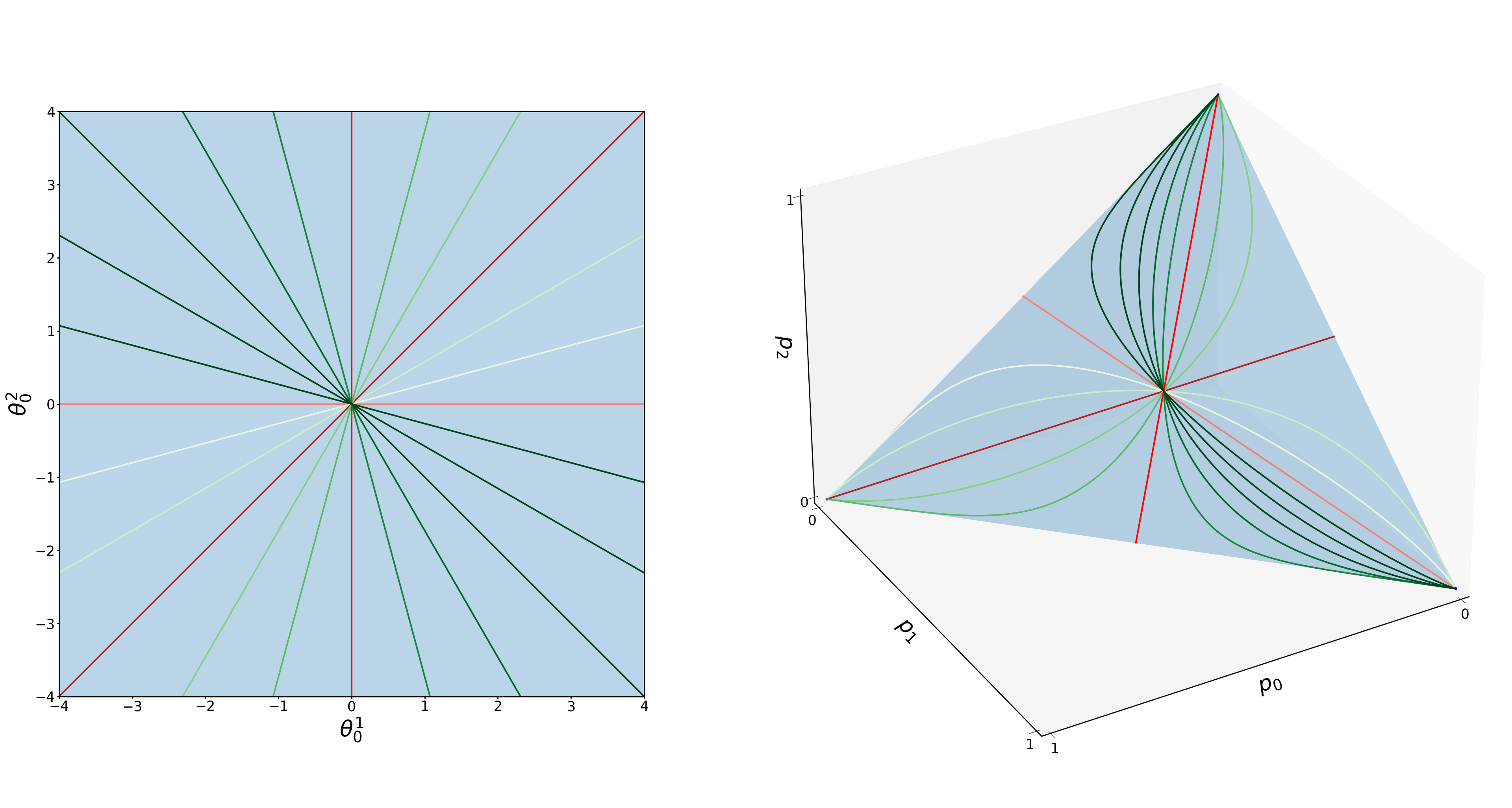}	
		\caption{Straight lines through the origin in $\theta_0$-coordinates on the left, and their images in $\mathcal{S}^2$ under the inverse mapping $\theta_0^{-1}$ on the right. In red the lines parallel to the axes and the diagonal in $\R^2$, which remain straight in $\mathcal{S}^2$.}
		\label{fig:lines_deformation}
	\end{figure}
	
	\begin{rmk}The regularity of the $\hilbert$-balls, when represented in $\theta_0$-coordinates, has other surprising consequences\footnote{In a more artistic vein, Figure \ref{fig:tiling_of_simplex} also illustrates that the map $\theta_0:S^n\to \mathbb{R}^n$, for $n=2$, corresponds to the classical transformation between parallel oblique perspective ($\theta_0$-coordinates) and three-point perspective (by viewing $S^n$ with its vertices at the three vanishing points), linking back well beyond Birkhoff (1957) and Hilbert (1895), at least as far as the work of Jean Pelerin (Viator) in \emph{De Artificiali Perspectiva} (1505).}---for example, Hilbert balls of constant radius naturally tile the space, as illustrated in Figure \ref{fig:tiling_of_simplex}. 
	\end{rmk}
	\begin{figure}[htp]
		\centering
		\includegraphics[width=0.8\textwidth]{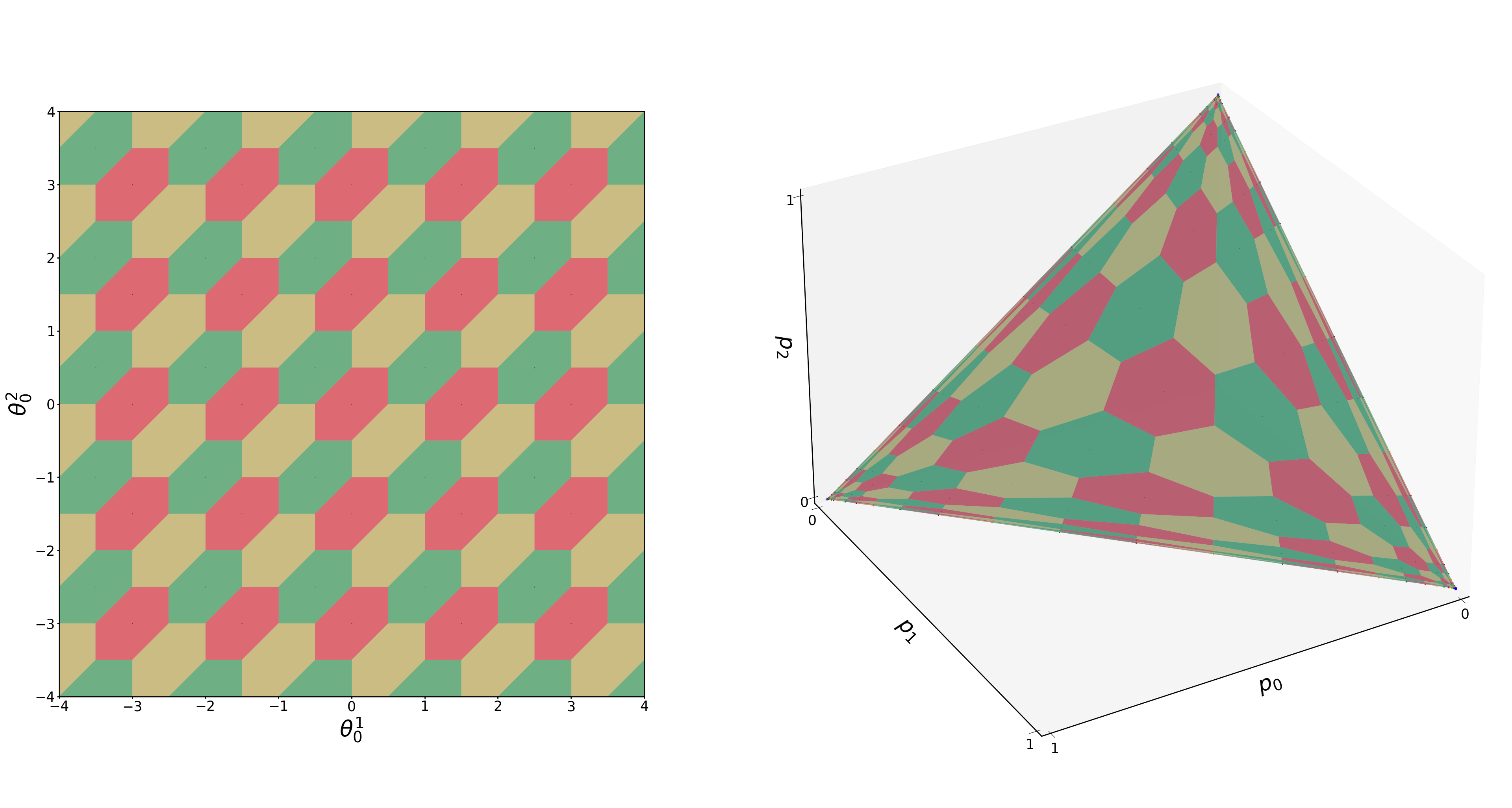}	
		\caption{Tiling of the $2$-dimensional probability simplex $\mathcal{S}^2$ with Hilbert balls of radius $0.5$, starting from the ball around the center $\big(\frac{1}{3}, \frac{1}{3}, \frac{1}{3}\big)$ (corresponding to the origin in the $\theta_0$-coordinates on the left).}
		\label{fig:tiling_of_simplex}
	\end{figure}
	
	\section{Metric-comparisons for probability measures: $\tvnormbig$ and $\hilbert$}
	
	We now exploit the geometric intuition we gathered in the previous section to derive a bound, sharper than \eqref{eq:atar_zeitouni_bound}, for the total variation norm with respect to the Hilbert projective metric. We start by working with discrete probabilities in the simplex $\mathcal{S}^n$ and then extend our result to probability measures on a general measurable space. Perhaps unsurprisingly, the distance $\mathcal{T}(\mu,\nu) = \tanh (\hilbert(\mu,\nu) / 4)$, which we defined in Section~\ref{sec:contraction}, plays a role once more in the computations below.
	
	\subsection{Probabilities on finite state-space}
	
	First of all, recall that for a measurable space $(E, \mathcal{F})$ the total variation distance \eqref{eq:total_var_norm_bm} between two probability measures $\mu, \nu \in \mathcal{P}(E)$ is equivalent to
	\begin{equation}\label{eq:total_var_indicator}
		\|\mu - \nu \|_{\tvnorm} = 2 \sup_{A \in \mathcal{\mathcal{F}}} | \mu(A) - \nu(A)|,
	\end{equation}
	which, in the case of $E \cong \{0,\cdots n\}$ and $\mu,\nu \in \mathcal{S}^n$, reduces to
	\begin{equation}\label{eq:tv_equal_l_infty}
		\|\mu - \nu \|_{\tvnorm} = \sum_{i = 0}^n |\mu^i - \nu^i | = \| \mu - \nu \|_{\ell^1}.
	\end{equation}
	\begin{rmk}
		Note that the factor of 2 in \eqref{eq:total_var_indicator} is usually dropped, in which case \eqref{eq:tv_equal_l_infty} would be stated as $\|\mu - \nu \|_{\tvnorm} = \tfrac{1}{2}\| \mu - \nu \|_{\ell^1}$. We keep the factor of 2 in analogy with Atar and Zeitouni \cite{atar-zeitouni97}.
	\end{rmk}
	
	\begin{thm}\label{thm:hilbert_and_L1_simplex}
		Given two probability measures $\mu, \nu \in \mathcal{S}^n$, we have that
		\begin{equation}\label{eq:hilbert_and_L1}
			\|\mu - \nu \|_{\tvnorm}=\norm{\mu - \nu}_{\ell^1} \le 2 \tanh \frac{\mathcal{H}(\mu,\nu)}{4}   .
		\end{equation}
		Equivalently, we have
		\begin{equation}\label{eq:hilbert_and_TV}
			\sup_{A \subseteq \{0, \dots, n\}} \sum_{i \in A} |\mu^i - \nu^i| \le  \mathcal{T}(\mu,\nu) .
		\end{equation}
	\end{thm}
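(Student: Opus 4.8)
The plan is to reduce the inequality to a finite computation over the vertices of the Hilbert ball, exploiting the polytope structure established in Lemmas~\ref{lemma:polytope_Rn} and~\ref{lemma:polytope_Sn}. Fix $\nu$ and set $R = \hilbert(\mu,\nu)$. Every probability measure $\mu'$ with $\hilbert(\mu',\nu)\le R$ lies in the convex polytope $\mathcal D = \theta_0^{-1}(\mathcal C)$ of Lemma~\ref{lemma:polytope_Sn}. Since $\mu'\mapsto\|\mu'-\nu\|_{\ell^1}$ is convex (a norm composed with an affine map) and $\mathcal D$ is a compact convex polytope, its maximum over $\mathcal D$ is attained at one of the vertices. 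As $\mu\in\mathcal D$, it therefore suffices to bound $\|\cdot-\nu\|_{\ell^1}$ at each vertex of $\mathcal D$.

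By Lemma~\ref{lemma:polytope_Sn} the vertices of $\mathcal D$ are the images $\theta_0^{-1}(v_{\mathcal I}^{\pm})$ of the vertices \eqref{eq:vertices_of_C} of $\mathcal C$. First I would compute these images explicitly: applying \eqref{eq:theta_transf_inv} to $v_{\mathcal I}^+ = \theta_0(\nu)+R\sum_{i\in\mathcal I}e_i$ gives the tilted measure
\[
\mu_{\mathcal I}^i = \frac{\nu^i\, e^{R\,\mathds 1_{\{i\in\mathcal I\}}}}{\sum_{j} \nu^j e^{R\,\mathds 1_{\{j\in\mathcal I\}}}}, \qquad i\in N,
\]
so that the mass is boosted by the factor $e^R$ exactly on $\mathcal I$ before renormalising, and analogously (with $e^{-R}$) for $v_{\mathcal I}^-$. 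Writing $s=\nu(\mathcal I)=\sum_{i\in\mathcal I}\nu^i$ for the mass of the tilted set, a direct calculation splitting the sum over $\mathcal I$ and its complement shows $\mu_{\mathcal I}\ge\nu$ on $\mathcal I$ and $\mu_{\mathcal I}\le\nu$ off $\mathcal I$, whence
\[
\|\mu_{\mathcal I}-\nu\|_{\ell^1} = \frac{2\,s(1-s)(e^R-1)}{(e^R-1)s+1} =: f(s).
\]
The $v_{\mathcal I}^-$ vertices yield the same expression with $s$ replaced by $\nu(\mathcal I^c)=1-s$, and as $f$ is symmetric under $s\mapsto 1-s$ both families are covered by $f$.

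It then remains to maximise $f$ over $s\in[0,1]$. Setting $f'(s)=0$ reduces to the quadratic $(e^R-1)s^2+2s-1=0$, whose root in $[0,1]$ is $s^\ast = 1/(e^{R/2}+1)$; substituting back and using $e^R-1=(e^{R/2}-1)(e^{R/2}+1)$ gives $f(s^\ast) = 2(e^{R/2}-1)/(e^{R/2}+1) = 2\tanh(R/4)$, which I recognise as $2\,\mathcal T(\mu,\nu)$. Hence $\|\mu-\nu\|_{\ell^1}\le\max_{\mathcal I} f(\nu(\mathcal I))\le 2\tanh(R/4)$, which is \eqref{eq:hilbert_and_L1}; the equivalent form \eqref{eq:hilbert_and_TV} then follows at once from \eqref{eq:total_var_indicator} and \eqref{eq:tv_equal_l_infty}. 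Sharpness is obtained by choosing $\nu$ and $\mathcal I$ with $\nu(\mathcal I)=s^\ast$ (e.g.\ on a two-point space), so that $\mu=\mu_{\mathcal I}$ attains equality.

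The main obstacle I anticipate is purely computational rather than conceptual: carefully evaluating $\|\mu_{\mathcal I}-\nu\|_{\ell^1}$ at the vertices and identifying the resulting one-variable maximum with $2\tanh(R/4)$. The geometric reduction---a convex functional maximised at a vertex of the Hilbert ball---is the clean conceptual step, and it is precisely what the polytope description of the ball was set up to deliver; the fact that the points $v_{\mathcal I}^{\pm}$ exhaust the vertices is already supplied by Lemma~\ref{lemma:polytope_Sn}.
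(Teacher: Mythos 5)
Your proposal is correct and follows essentially the same route as the paper: reduce to the vertices of the polytope $\mathcal D$ from Lemma~\ref{lemma:polytope_Sn}, compute $\|\mu_{\mathcal I}-\nu\|_{\ell^1}$ there (your $f(s)$ is exactly the paper's $g_R^{+}(\mathbf{S}_{\mathcal I})$, and your symmetry observation $g_R^{-}(s)=g_R^{+}(1-s)$ matches), and maximise the resulting one-variable function at $s^\ast=1/(e^{R/2}+1)$ to obtain $2\tanh(R/4)$. The only point the paper adds is the disposal of the degenerate cases $\mu\nsim\nu$ (where the bound is trivial) and $\mu,\nu\in\partial\mathcal S^n$ (reduced to the interior of a common lower-dimensional face), which your argument, stated for $\mathring{\mathcal S}^n$, should also record.
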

	
	From Lemma~\ref{lemma:polytope_Sn} we know that if $\mu, \nu \in \mathring{\mathcal{S}}^n$ with $\mathcal{H} (\mu, \nu) = R < \infty$, then $\mu$ belongs to the boundary of a convex polytope $\mathcal D \in \mathcal S^n$, centred at $\nu$ and with vertices $\{ \theta_0^{-1} (v_{\mathcal I}^{+}), \theta_0^{-1} (v_{\mathcal I}^{-}) \, : \, \mathcal{I} \subseteq \{ 1, \dots, n\}, \, \mathcal{I} \neq \emptyset  \}$. Finding an upper bound for $\| \mu - \nu   \|_{\ell^1}$ is now a simple convex optimization problem: we know the $\ell^1$-distance between $\nu$  and $\mu$ is maximized when $\mu$ is at one of the vertices of $\mathcal D$, so we compute
	the $\ell^1$-distance between $\nu$ and each of these vertices, and then maximize over the choice of vertex.
	
	\begin{lemma}\label{lemma:L1_hilbert_pathwise}
		Assume $\nu \in \mathring{\mathcal S}^n$ is known. If $\mathcal H(\mu,\nu) = R$, the $\ell^1$-distance between $\mu$ and $\nu$ is bounded by
		\begin{equation}\label{eq:L1_hilbert_pathwise}
			\norm{\mu - \nu}_{\ell^1} \le 2 \max_{\substack{\mathcal I \subseteq \{1, \dots, n \} \\ \mathcal I \neq \emptyset} } \left\{
			\frac{  \mathbf{S}_{\mathcal I} ( 1 - \mathbf{S}_{\mathcal I} ) ( e^{R} - 1   )}{1+
				\mathbf{S}_{\mathcal I} ( e^{R} - 1)} \:\: \vee  \:\:
			\frac{  \mathbf{S}_{\mathcal I} ( 1 - \mathbf{S}_{\mathcal I} ) ( 1- e^{-R} )}{ 1+
				\mathbf{S}_{\mathcal I} ( e^{-R} - 1)} \right\},
		\end{equation}
		where $\mathbf{S}_{\mathcal I} : = \sum_{i \in \mathcal I} \nu^i$ for any subset $ \mathcal I$ (not $\emptyset$) of the indices.
	\end{lemma}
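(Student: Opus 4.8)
The plan is to exploit the explicit description of the Hilbert ball as a convex polytope, together with the elementary fact that a convex function on a polytope attains its maximum at a vertex. By Lemma~\ref{lemma:polytope_Sn}, since $\mathcal{H}(\mu,\nu) = R$, the measure $\mu$ lies in the convex polytope $\mathcal{D} \subset \mathring{\mathcal{S}}^n$ whose vertices are the images $\theta_0^{-1}(v_{\mathcal{I}}^{\pm})$ of the points \eqref{eq:vertices_of_C} under the inverse chart \eqref{eq:theta_transf_inv}. Writing $\mu$ as a convex combination of these vertices and using that the map $w \mapsto \norm{w - \nu}_{\ell^1}$ is convex, I would obtain
\begin{equation*}
	\norm{\mu - \nu}_{\ell^1} \le \max_{\substack{\mathcal{I} \subseteq \{1,\dots,n\} \\ \mathcal{I} \neq \emptyset}} \max\big\{ \norm{\theta_0^{-1}(v_{\mathcal{I}}^+) - \nu}_{\ell^1}, \, \norm{\theta_0^{-1}(v_{\mathcal{I}}^-) - \nu}_{\ell^1} \big\}.
\end{equation*}
It therefore suffices to compute the $\ell^1$-distance from $\nu$ to each vertex explicitly.

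For the core computation, I would apply \eqref{eq:theta_transf_inv} to $v_{\mathcal{I}}^+$. Since $\theta_0^i(\nu) = \log(\nu^i/\nu^0)$ and $v_{\mathcal{I}}^+$ shifts the coordinates indexed by $\mathcal{I}$ by $+R$, the unnormalized weights become $(\nu^i/\nu^0)e^R$ for $i \in \mathcal{I}$ and $\nu^i/\nu^0$ otherwise (with the convention $\theta_0^0 \equiv 0$, so that the index $0 \notin \mathcal{I}$ behaves like the off-$\mathcal{I}$ coordinates). The normalizing constant collapses, using $\sum_{i \notin \mathcal{I}} \nu^i = 1 - \mathbf{S}_{\mathcal{I}}$, to $Z := 1 + \mathbf{S}_{\mathcal{I}}(e^R - 1)$, yielding the vertex $w$ with $w^i = \nu^i e^R / Z$ for $i \in \mathcal{I}$ and $w^i = \nu^i/Z$ otherwise. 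A short calculation then gives $w^i - \nu^i = \nu^i (e^R-1)(1-\mathbf{S}_{\mathcal{I}})/Z \ge 0$ on $\mathcal{I}$ and $w^i - \nu^i = -\nu^i \mathbf{S}_{\mathcal{I}}(e^R-1)/Z \le 0$ off $\mathcal{I}$.

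The key simplification---and the only point requiring care---is the observation that since $w$ and $\nu$ are both probability vectors, the coordinates of $w - \nu$ sum to zero, so the positive and negative parts carry equal mass. Thus $\norm{w-\nu}_{\ell^1}$ is exactly twice the sum over $\mathcal{I}$, which evaluates to $2\mathbf{S}_{\mathcal{I}}(1-\mathbf{S}_{\mathcal{I}})(e^R-1)/(1+\mathbf{S}_{\mathcal{I}}(e^R-1))$, matching the first term in the maximum \eqref{eq:L1_hilbert_pathwise}. Repeating the computation verbatim for $v_{\mathcal{I}}^-$ (equivalently, replacing $R$ by $-R$ and tracking the sign change, since now $e^{-R}-1 < 0$ reverses which coordinates increase) produces the second term, and taking the maximum over all nonempty $\mathcal{I}$ completes the argument. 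There is no genuine obstacle beyond bookkeeping; the main thing to get right is the direction of the inequalities---which coordinates increase versus decrease---so that the absolute values in the $\ell^1$-norm are resolved correctly, and the trivial case $R=0$ (where $\mu=\nu$ and both sides vanish) may be noted separately.
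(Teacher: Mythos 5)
Your proposal is correct and follows essentially the same route as the paper: bound $\norm{\mu-\nu}_{\ell^1}$ by its maximum over the vertices $\theta_0^{-1}(v_{\mathcal I}^{\pm})$ of the Hilbert-ball polytope $\mathcal D$ from Lemma~\ref{lemma:polytope_Sn}, then compute each vertex distance explicitly. Your use of the zero-sum of the coordinates of $w-\nu$ to resolve the absolute values is a slightly cleaner bookkeeping device than the paper's term-by-term expansion, but the argument is the same.
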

	\begin{proof}
		Recall \eqref{eq:vertices_of_C}. Consider a vertex $\mathbf{v}_{\mathcal I +} = \theta_0^{-1}(v_{\mathcal I}^+)$ of $\mathcal D \in \mathcal S^n$, and let $\mathbf{N}_0 = \mathbf{N} \setminus \{ 0 \}$. Compute
		\begin{align*}
			\| \nu - \mathbf{v}_{\mathcal I +}    \|_{\ell^1} &= \sum_{i = 0}^{n} | \nu^i -  \mathbf{v}_{\mathcal I +}^i  | \\
			&= \Bigg| \nu^0 -\frac{1}{1 + \sum_{i \in \mathcal I} \exp\{\theta_{0}^i(\nu) + R\} + \sum_{i \in \mathbf{N}_0 \setminus \mathcal I} \exp\{\theta_{0}^i(\nu)\} } \Bigg| \\
			&\quad + \sum_{k \in \mathcal I} \Bigg| \nu^k -\frac{\exp\{ \theta_0^k(\nu) + R \}}{1 + \sum_{i \in \mathcal I} \exp\{\theta_{0}^i(\nu) + R\} + \sum_{i \in \mathbf{N}_0 \setminus \mathcal I} \exp\{\theta_{0}^i(\nu)\} } \Bigg| \\
			&\quad + \sum_{k \in \mathbf{N}_0 \setminus \mathcal I} \Bigg| \nu^k -\frac{\exp\{ \theta_0^k(\nu) \}}{1 + \sum_{i \in \mathcal I} \exp\{\theta_{0}^i(\nu) + R\} + \sum_{i \in \mathbf{N}_0 \setminus \mathcal I} \exp\{\theta_{0}^i(\nu)\} } \Bigg| \\
			&= \Bigg| \nu^0 -\frac{1}{1 + e^R \sum_{i \in \mathcal I} \frac{\nu^i}{\nu^0} + \sum_{i \in \mathbf{N}_0 \setminus \mathcal I} \frac{\nu^i}{\nu^0} } \Bigg|
			+ \sum_{k \in \mathcal I} \Bigg| \nu^k -\frac{e^R \frac{\nu^k}{\nu^0}}{1 + e^R \sum_{i \in \mathcal I} \frac{\nu^i}{\nu^0} + \sum_{i \in \mathbf{N}_0 \setminus \mathcal I} \frac{\nu^i}{\nu^0} } \Bigg| \\
			&\quad + \sum_{k \in \mathbf{N}_0 \setminus \mathcal I} \Bigg| \nu^k -\frac{ \frac{\nu^k}{\nu^0}}{1 + e^R \sum_{i \in \mathcal I} \frac{\nu^i}{\nu^0} + \sum_{i \in \mathbf{N}_0 \setminus \mathcal I} \frac{\nu^i}{\nu^0} } \Bigg|.
		\end{align*}
		Letting $\mathbf{S}_{\mathcal I} : = \sum_{i \in \mathcal I} \nu^i$ and $ \mathbf{ \breve S}_{\mathcal I} : = \sum_{i \in \mathbf{N}_0 \setminus  \mathcal I} \nu^i$, so that $\mathbf{S}_{\mathcal I} + \mathbf{ \breve S}_{\mathcal I} + \nu^0 = 1$, some algebra yields
		\begin{equation*}
			\| \nu - \mathbf{v}_{\mathcal I +}   \|_{\ell^1} = 2 (e^R - 1) \frac{\mathbf{S}_{\mathcal I} (1 - \mathbf{S}_{\mathcal I}) }{1 +  \mathbf{S}_{\mathcal I} \, e^R - \mathbf{S}_{\mathcal I}} = : g_R^{+}(\mathbf{S}_{\mathcal I})
		\end{equation*}
		Equivalently, if we let $\mathbf{v}_{\mathcal I -} = \theta_0^{-1}(v_{\mathcal I}^-)$, we obtain
		\begin{equation*}
			\| \nu - \mathbf{v}_{\mathcal I -}    \|_{\ell^1} = 2 (1 - e^{-R}) \frac{\mathbf{S}_{\mathcal I} (1 - \mathbf{S}_{\mathcal I}) }{1 +  \mathbf{S}_{\mathcal I} \, e^{-R} - \mathbf{S}_{\mathcal I}} = : g_R^{-}(\mathbf{S}_{\mathcal I})
		\end{equation*}
		Then the $\ell^1$-distance between $\mu$ and $\nu$ is bounded by the maximum between $\| \nu - \mathbf{v}_{\mathcal I +}    \|_{\ell^1}$ and $\| \nu - \mathbf{v}_{\mathcal I -}    \|_{\ell^1}$ over all choices of vertices, which yields the lemma.
	\end{proof}
	
	\begin{proof}[Proof of Theorem \ref{thm:hilbert_and_L1_simplex}]
		Let $\mu, \nu \in \mathring{\mathcal{S}}^n$ be such that $\hilbert (\mu,\nu)=R$. Recall the notation from the proof of Lemma~\ref{lemma:L1_hilbert_pathwise}. Note that $g_R^{+}(x)$ and $g_R^{-}(x)$ for $x \in [0,1]$ are symmetric around $x = \frac{1}{2}$. By standard calculus, we find that the maximum of $g_R^{+}$ is attained at $x_{+}^{\ast} = \frac{1}{1+e^{R/2}} $; while $g_R^{-}$ is maximized at $x_{-}^{\ast} = 1 - x_{+}^{\ast} = \frac{e^{R/2}}{1+e^{R/2}}$. Evaluating $g_R^{+}$ and $g_R^{-}$ at their respective maximizers gives the upper bound
		\begin{align*}
			\| \nu - \mathbf{v}    \|_{\ell^1}
			&\le \max_{\substack{\mathcal I \subseteq \{1, \dots, n \} \\ \mathcal I \neq \emptyset} } \{ \| \nu - \mathbf{v}_{\mathcal I +}   \|_{L^1} \, \vee \, \| \nu - \mathbf{v}_{\mathcal I -}    \|_{L^1}   \} \\
			&\le \max_{\nu \in \mathring{\mathcal{S}}^n}\max_{\substack{\mathcal I \subseteq \{1, \dots, n \} \\  \mathcal I \neq \emptyset} } \{ \| \nu - \mathbf{v}_{\mathcal I +}   \|_{L^1} \, \vee \, \| \nu - \mathbf{v}_{\mathcal I -}    \|_{L^1}   \} \\
			&\le \max_{\mathbf{S}_{\mathcal I} \in [0,1]} \{ g_R^{+}(\mathbf{S}_{\mathcal I}) \, \vee \, g_R^{-}(\mathbf{S}_{\mathcal I})    \} \\
			&\le 2 \tanh \frac{R}{4} .
		\end{align*}
		Finally, note that the statement is trivial if $\mu \nsim \nu$. Moreover, if $\mu \sim \nu$ and $\mu, \nu \in \partial \mathcal{S}^n$, then they must belong to the same $(n-d)$-face of $\mathcal{S}^n$, which is also a probability simplex $\mathcal{S}^d$ with $1 \le d < n$. In particular, $\mu, \nu \in \mathring{\mathcal{S}}^d$, which is the same as the case we considered originally, so we are done.
	\end{proof}
	
	\begin{rmk}
		For low dimensions, Lemma \ref{lemma:L1_hilbert_pathwise} provides a way to compute a bound for $\|\mu - \nu \|_{\ell^1}$ which is tighter than Theorem~\ref{thm:hilbert_and_L1_simplex}. While this still holds true in higher dimensions, the improvement gained by computing explicitly the bound \eqref{eq:L1_hilbert_pathwise} instead of using \eqref{eq:hilbert_and_L1} can be negligible, since it is likely that at least one of the possible combinations for $\mathbf{S}_{\mathcal I}$ comes very close to the maximizer.
	\end{rmk}
	
	\subsection{Probabilities on a general measurable space}
	
	We now use the discrete result of Theorem~\ref{thm:hilbert_and_L1_simplex} to give bounds for general probability measures.
	
	\begin{corollary}\label{cor:tv_H_bound_prob_measures}
		Let $(E, \mathcal{F})$ be a measurable space. Consider $\mu, \nu \in \mathcal{P}(E)$. We have
		\begin{equation*}
			\frac{1}{2}\|\mu - \nu \|_{\tvnorm} = \sup_{A \in \mathcal{F}} | \mu(A) - \nu(A)| \le \mathcal{T} (\mu, \nu).
		\end{equation*}
	\end{corollary}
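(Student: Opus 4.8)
The plan is to reduce the general statement to the finite-state bound of Theorem~\ref{thm:hilbert_and_L1_simplex} by a coarsening (data-processing) argument. First, the equality $\tfrac12\|\mu-\nu\|_{\tvnorm} = \sup_{A\in\mathcal{F}}|\mu(A)-\nu(A)|$ is exactly \eqref{eq:total_var_indicator}, so it suffices to show $|\mu(A)-\nu(A)| \le \mathcal{T}(\mu,\nu)$ for each fixed $A\in\mathcal{F}$ and then take the supremum over $A$. If $\mu\ncomp\nu$ then $\hilbert(\mu,\nu)=\infty$, whence $\mathcal{T}(\mu,\nu)=1\ge|\mu(A)-\nu(A)|$ trivially; hence I may assume $\mu\comp\nu$, so that by \eqref{eq:comparability} there are $q,r>0$ with $q\mu(B)\le\nu(B)\le r\mu(B)$ for all $B\in\mathcal{F}$.

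First I would, for a fixed $A\in\mathcal{F}$, consider the pushforward of measures under the two-valued map $\mathds{1}_A:E\to\{0,1\}$, which coarsens $\mu,\nu$ to the two-point measures $\bar\mu := (\mu(A),\mu(A^c))$ and $\bar\nu := (\nu(A),\nu(A^c))$ in the one-dimensional simplex $\mathcal{S}^1$. Since the comparability bounds pass to the sets $A$ and $A^c$, we get $\bar\mu\comp\bar\nu$ componentwise, so all distances below are finite. A direct computation gives $\|\bar\mu-\bar\nu\|_{\ell^1} = 2|\mu(A)-\nu(A)|$, because $\mu(A^c)-\nu(A^c)=-(\mu(A)-\nu(A))$. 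Applying Theorem~\ref{thm:hilbert_and_L1_simplex} on $\mathcal{S}^1$ then yields $2|\mu(A)-\nu(A)| = \|\bar\mu-\bar\nu\|_{\ell^1} \le 2\mathcal{T}(\bar\mu,\bar\nu)$, i.e.\ $|\mu(A)-\nu(A)|\le\mathcal{T}(\bar\mu,\bar\nu)$.

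The crux is then the non-expansiveness $\mathcal{T}(\bar\mu,\bar\nu)\le\mathcal{T}(\mu,\nu)$, equivalently $\hilbert(\bar\mu,\bar\nu)\le\hilbert(\mu,\nu)$ since $\tanh$ is increasing. This is precisely the statement that the Hilbert metric does not increase under a positive linear operator, here the pushforward $L:\mathcal{M}_+(E)\to\mathcal{M}_+(\{0,1\})$, $L\mu = (\mathds{1}_A)_*\mu$. The key observation --- already used inside the proof of Theorem~\ref{thm:tanh_contaction} --- is that if $r\mu-\nu$ lies in the cone then $L(r\mu-\nu)=rL\mu-L\nu$ lies in the target cone by linearity and positivity, so $\beta(L\mu,L\nu)\le\beta(\mu,\nu)$ and likewise with the roles of $\mu,\nu$ reversed; multiplying the two gives $\hilbert(L\mu,L\nu)\le\hilbert(\mu,\nu)$. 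This is just Birkhoff's Theorem~\ref{thm:birkhoff} (equivalently Theorem~\ref{thm:tanh_contaction}) with contraction coefficient $\tau\le1$. Combining with the previous step gives $|\mu(A)-\nu(A)|\le\mathcal{T}(\mu,\nu)$, and taking the supremum over $A\in\mathcal{F}$ completes the proof.

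The hard part will be making the coarsening rigorous across the two different cones: Theorem~\ref{thm:tanh_contaction} as stated concerns an operator $L:X\to X$, whereas the pushforward lands in a lower-dimensional space. I would resolve this either by re-deriving the one-line non-expansiveness estimate directly from the definition of $\beta$ (as sketched above, which needs nothing beyond positivity and linearity of $L$), or by viewing $L$ as an operator on $\mathcal{M}(E)$ itself, embedding $\mathcal{M}(\{0,1\})$ back into $\mathcal{M}(E)$ as measures supported on two fixed points of $E$. The only remaining subtlety --- the degenerate case where $\mu(A)$ or $\nu(A)$ vanishes --- is already absorbed into the reduction to the comparable case, where $\mu(A)=0$ forces $\nu(A)=0$.
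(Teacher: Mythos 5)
Your proposal is correct and follows essentially the same route as the paper: both reduce to the two-point coarsening $\{A,A^c\}$, apply the $\mathcal{S}^1$ case of Theorem~\ref{thm:hilbert_and_L1_simplex}, and use that passing to the coarser $\sigma$-algebra does not increase $\hilbert$ (which the paper verifies directly from the supremum-over-sets formula \eqref{eq:hilbert_indicator_derivatives}, while you phrase it as non-expansiveness of the pushforward and correctly note how to patch the cone-mismatch in Theorem~\ref{thm:tanh_contaction}). Working with an arbitrary fixed $A$ and taking the supremum at the end, as you do, is in fact slightly cleaner than the paper's choice of a near-optimal sequence $A_n$ followed by a limit.
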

	
	\begin{proof}
		Note that if $\mu \nsim \nu$, then the statement follows trivially, so assume that $\mu \sim \nu$. By definition of the total variation distance \eqref{eq:total_var_indicator}, for all $n \in \N$ there exists a set $A_n \in \mathcal{F}$ such that
		\begin{equation}\label{eq:tv_An_set}
			\frac{1}{2}\|\mu - \nu \|_{\tvnorm} - \frac{1}{n} \le | \mu(A_n) - \nu(A_n) | \le \frac{1}{2}\|\mu - \nu \|_{\tvnorm}.
		\end{equation}
		
		Let $\mathcal{F}_n$ be the $\sigma$-algebra generated by $A_n$, i.e. $\mathcal{F}_n = \{ A_n, A_n^{c}, E, \emptyset \}$, and let $\pi_n$ be the partition of $E$ given by $\pi_n = \{ A_n, A_n^c \}$. Consider the probability measures $\mu_n, \nu_n$ on the space $(E, \mathcal{F}_n)$ given by
		\begin{equation*}
			\mu_n = \mu |_{\pi_n}, \qquad \nu_n = \nu|_{\pi_n}.
		\end{equation*}
		Then $\mu_n$ and $\nu_n$ are probabilities on the finite state space $\{ A_n, A_n^c \}$, and in particular $\mu_n, \nu_n \in \mathcal{S}^1$. Therefore it holds that
		\begin{align*}
			\sup_{A \in \mathcal{F}_n} | \mu_n(A) - \nu_n(A) | 
			&= |\mu_n(A_n) - \nu_n(A_n)|  \\
			&\le \tanh \frac{\hilbert (\mu_n, \nu_n)}{4}
			= \tanh \frac{ \Big| \log \frac{\mu_n(A_n)}{\nu_n(A_n)} - \log \frac{\mu_n(A_n^c)}{\nu_n(A_n^c)} \Big|}{4}  \\
			&\le \tanh \frac{ \sup_{A,B \in \mathcal{F}}  \Big( \log \frac{\mu(A)}{\nu(A)} - \log \frac{\mu(B)}{\nu(B)} \Big)}{4}
			= \tanh \frac{\hilbert (\mu, \nu)}{4}.
		\end{align*}
		
		As for the left-hand side, we have that 
		\begin{equation*}
			\sup_{A \in \mathcal{F}_n} | \mu_n(A) - \nu_n(A) | 
			= |\mu_n(A_n) - \nu_n(A_n) |
			= | \mu(A_n) - \nu(A_n) |
			\le \sup_{A \in \mathcal{F}} | \mu(A) - \nu(A) |.
		\end{equation*}
		But by \eqref{eq:tv_An_set} we also have that
		\begin{equation*}
			\sup_{A \in \mathcal{F}_n} | \mu_n(A) - \nu_n(A) | 
			= |\mu_n(A_n) - \nu_n(A_n) |
			= | \mu(A_n) - \nu(A_n) |
			\ge \sup_{A \in \mathcal{F}} | \mu(A) - \nu(A) | - \frac{1}{n}.
		\end{equation*}
		Therefore, putting together the two above inequalities, we arrive at the final expression
		\begin{equation*}
			\lim_{n \to \infty} \sup_{A \in \mathcal{F}_n} | \mu_n(A) - \nu_n(A) | = \sup_{A \in \mathcal{F}} | \mu(A) - \nu(A) |,
		\end{equation*}
		and the result follows.
	\end{proof}
	
	\begin{rmk}
		Note that the bounds in Theorem~\ref{thm:hilbert_and_L1_simplex} and Corollary~\ref{cor:tv_H_bound_prob_measures} can be attained, and therefore are sharp. In particular, $2\mathcal{T}$ is nothing but the maximum $\tvnormbig$ (or $\ell^1$) norm between two probability measures which are a fixed $\hilbert$-distance apart. Theorem~\ref{thm:tanh_contaction} then tells us that this quantity contracts under (positive) linear transformations.
	\end{rmk}
	
	\begin{rmk}
		For $\mu,\nu \in \mathcal{P}(E)$, we can also find an upper bound for $\mathcal{T}(\mu,\nu)$ in terms of $\tvnormbig$. For all $A \in \mathcal{F}$, we have
		\begin{equation*}
			\tanh \bigg( \frac{1}{2} \log \frac{\mu(A)}{\nu(A)}   \bigg) 
			= \frac{\mu(A) - \nu(A)}{\mu(A) + \nu(A)},
		\end{equation*}
		therefore
		\begin{equation}
			\mathcal{T}(\mu,\nu)
			\le \tanh \bigg( \frac{1}{2} \log \Big( \sup_{A \in \mathcal{F}} \frac{\mu(A)}{\nu(A)} \lor  \sup_{B \in \mathcal{F}} \frac{\mu(B)}{\nu(B)} \Big)  \bigg) 
			\le \frac{\sup_{A \in \mathcal{F}} |\mu(A) - \nu(A)|}{2 \big(\inf_{B \in \mathcal{F}} \mu(B) \wedge \inf_{B \in \mathcal{F}} \nu(B)\big)}.
		\end{equation}
		However, we note that the denominator of the right-hand side may become arbitrarily small, which is unsurprising given $\mathcal{T}$ is a stronger metric than $\tvnormbig$ on $S^n$.
	\end{rmk}
	
	\paragraph{Acknowledgements} The research of EF was supported by the EPSRC under the award EP/L015811/1. SC acknowledges the support of the UKRI Prosperity Partnership Scheme (FAIR) under EPSRC Grant EP/V056883/1, the Alan Turing Institute and the Office for National Statistics (ONS), and the Oxford--Man Institute for Quantitative Finance.
	
	%%-------------------------------------------------------------------------------
	\newpage
	\appendix
	\section{Convergence rates of finite state-space Markov chains}\label{app:markov}
	
	In view of Proposition~\ref{prop:contraction_coeff}, we devote a brief appendix to the applications of the Hilbert metric and the Birkhoff's contraction coefficient to the study of the stability of discrete- and continuous-time Markov chains on finite state-spaces. We stress that this is by no means an exhaustive treatment of the topic, but, in the same spirit as the rest of the paper, it is meant to highlight the advantages and disadvantages of these tools in a probabilistic setting.
	
	Consider a homogeneous discrete-time Markov chain $(X_k)$ for $k= 0, 1, \dots$ on $(n+1)$-states, with transition matrix $P$ and $X_0 \sim \mu$, $\mu \in \mathcal{S}^n$. $P$ is a \textit{stochastic matrix} (it is non-negative and all rows sum to 1). Denote the state-space by $E\cong \{0,\cdots, n\}$. Theorem~\ref{thm:birkhoff} and Proposition~\ref{prop:contraction_coeff} immediately give that if $P$ is strictly positive, then the Markov chain forgets its initial distribution at a geometric rate, in the sense that, for $\nu \neq \mu$,
	\begin{equation}\label{eq:discrete_MC_hilbert}
		\hilbert ( \mu^\intercal P^{k}, \nu^\intercal P^{k}) \le \tau(P)^k \hilbert (\mu, \nu), \quad  \mathrm{for} \: k = 0, 1, \dots,
	\end{equation}
	with $\tau(P) < 1$. If $X_k$ is started from a Dirac measure, so $X_0 \sim \delta_{x}$ for $x \in E$, we can use the $\mathcal{T}$-distance to obtain a similar geometric contraction
	\begin{equation}\label{eq:discrete_MC_tanh}
		\mathcal{T}(\delta_{x}^\intercal P^k, \nu^\intercal P^k) \le \tau(P)^k \mathcal{T}(\delta_{x}, \nu) = \tau(P)^k, \quad \mathrm{for} \: k = 0, 1, \dots,
	\end{equation}
	and in general the $\mathcal{T}$-distance is always preferable when $\nu \nsim \mu$.
	
	We easily spot the main drawback of using the Birkhoff's contraction coefficient and the Hilbert distance to study the ergodicity of Markov chains: if $P$ is not strictly positive, then $\tau(P) = 1$, and neither \eqref{eq:discrete_MC_hilbert} nor \eqref{eq:discrete_MC_tanh} give us information about the convergence of $X_k$ to stability. It is known (see e.g.~\cite[Chapter~15]{meyn2009markov}), however, that irreducibility and aperiodicity of $X_k$ are enough to prove convergence of $X_k$ to equilibrium at a geometric rate. These two properties (irreducibility and aperiodicity) are equivalent to the transition matrix $P$ being \textit{primitive} (see e.g.~\cite[Thm.~1.4]{seneta_matrices}): a square non-negative matrix $P$ is called \textit{primitive} if there exists a positive integer $r$ such that $P^r$ is strictly positive. Thus, we can recover a contraction in Hilbert metric by considering the Markov chain $Y_k = X_{kr}$ instead, and obtain
	\begin{equation*}
		\mathcal{T} ( \mu^\intercal P^{kr}, \nu^\intercal P^{kr}) \le \tau(P^r)^k \mathcal{T} (\mu, \nu), \quad  \mathrm{for} \: k = 0, 1, \dots,
	\end{equation*}
	where $\tau(P^r) < 1$. More generally, note that for any two non-negative matrices $P$ and $Q$ we have \cite[Eq.~3.7]{seneta81}
	\begin{equation}\label{eq:tau_submultiplicativity}
		\tau(PQ) \le \tau(P) \tau(Q),
	\end{equation}
	which in particular implies that $\tau(P^s)$ is decreasing in $s \in \Z_+$. Then it is easy to show the following proposition.
	\begin{prop}\label{prop:discrete_primitive_mat}
		Let $P$ be a primitive stochastic matrix. For any $r \in \Z_+$, and $\mu,\nu \in \mathcal{S}^n$, there exists $C_r \in \R_+$ such that
		\begin{equation}\label{eq:r_root_bound}
			\mathcal{T} (\mu^\intercal P^k, \nu^\intercal P^k) \le C_r \big( \tau(P^r)^{\frac{1}{r}}\big)^k \mathcal{T} (\mu, \nu), \quad \forall k \in \Z_+,
		\end{equation}	
		where $C_r = \tau(P) \tau(P^r)^{-1} \ge 1$.
	\end{prop}
	\begin{proof}
		Note that when $r = 1$, the statement is just Theorem~\ref{thm:tanh_contaction}, so without loss of generality assume $r \ge 2$. By Theorem~\ref{thm:tanh_contaction} and submultiplicativity of $\tau(\cdot)$, we have
		\begin{equation*}
			\mathcal{T} (\mu^\intercal P^k, \nu^\intercal P^k) 
			\le \tau(P^k) \mathcal{T}(\mu,\nu)
			\le \tau(P^{r \lfloor \frac{k}{r} \rfloor}) \tau(P^{k - r\lfloor \frac{k}{r} \rfloor}) \mathcal{T} (\mu, \nu)
		\end{equation*}	
		where $\lfloor x \rfloor$ denotes the floor function. On one side, this yields the bound
		\begin{equation}\label{eq:floor_bound}
			\mathcal{T} (\mu^\intercal P^k, \nu^\intercal P^k) \le \tau(P^r)^{ \lfloor \frac{k}{r} \rfloor} \mathcal{T} (\mu, \nu), \quad \forall k \in \Z_+.
		\end{equation}
		On the other, since $\tau(P^s)$ is decreasing in $s$, we have
		\begin{equation*}
			\mathcal{T} (\mu^\intercal P^k, \nu^\intercal P^k) 
			\le \tau(P^r )^{\lfloor \frac{k}{r} \rfloor + 1} \frac{\tau(P^{k - r\lfloor \frac{k}{r} \rfloor})}{ \tau(P^r )} \mathcal{T} (\mu, \nu)
			\le \tau(P^r )^{\frac{k}{r}} C_r \mathcal{T} (\mu, \nu),
		\end{equation*}
		where $C_r =  \frac{\tau(P)}{ \tau(P^r )} \ge \frac{\tau(P^{k - r\lfloor \frac{k}{r} \rfloor})}{ \tau(P^r )} \ge 1$ for all $k$, since $1 \le k - r\lfloor \frac{k}{r} \rfloor < r$.
	\end{proof}
	
	\begin{rmk}
		To illustrate the result of Proposition \ref{prop:discrete_primitive_mat}, in Figure~\ref{fig:discrete_MC}(right)  we depict the convergence of a Markov chain $X$ on 7 states $\{ 0, 1, \dots, 6\}$ such that $X_k = i$ has probability $1/2$ of jumping to state $i+1$, and $1/2$ of jumping to state $i-1$ (modulo 7). Note that the associated transition matrix $P$ is primitive with $P^4 > 0$. By starting $X$ at state 0 and 6 respectively (so $\mu = \delta_0$ and $\nu = \delta_6$), and comparing the laws of the two chains using the $\mathcal{T}$-distance, we see that the worst-case bound \eqref{eq:floor_bound} is attained (see Figure~\ref{fig:discrete_MC}(left)).
	\end{rmk}
	
	\begin{figure}[htp]
		\centering
		\includegraphics[width=0.42\textwidth]{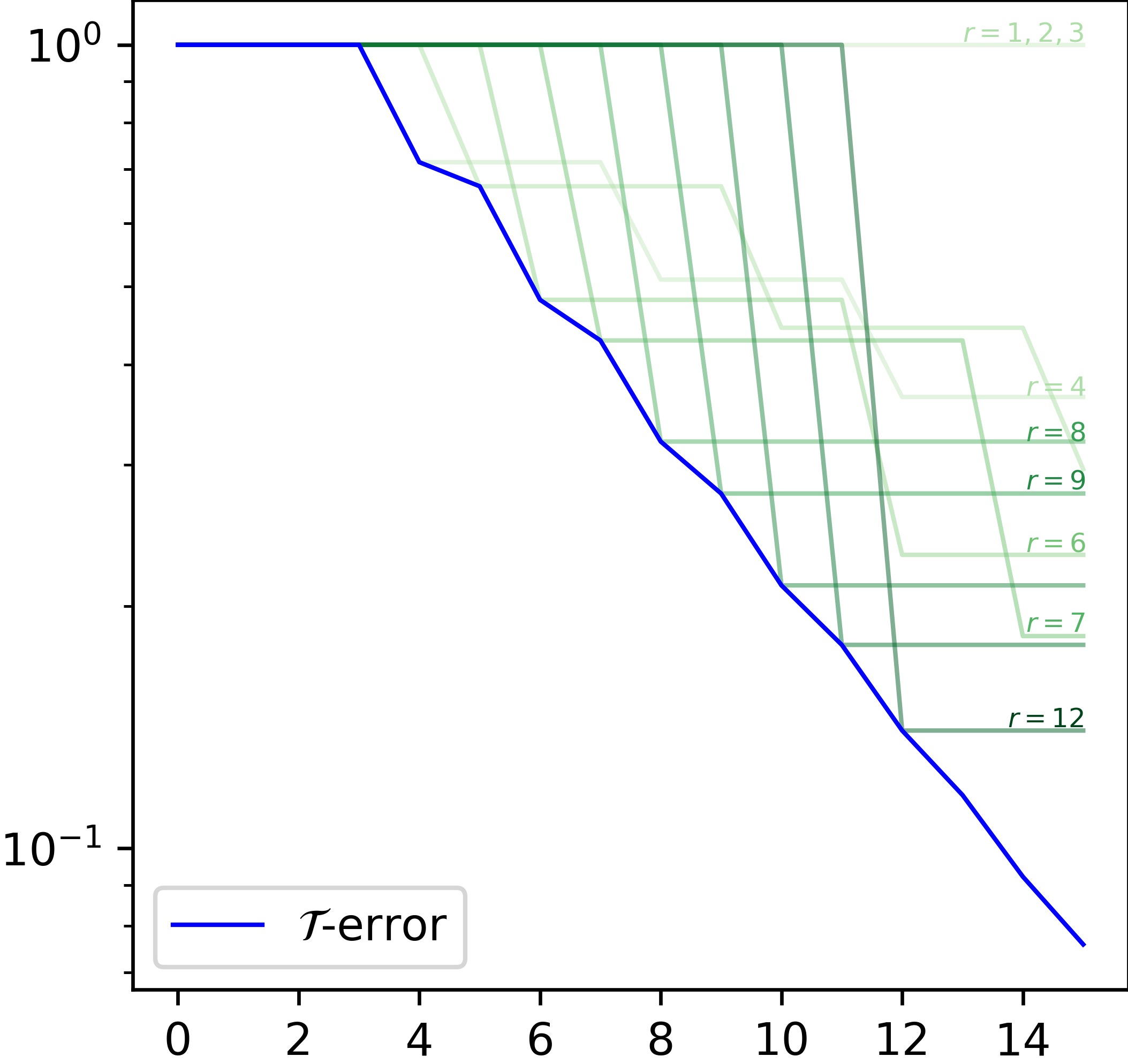}	
		\hspace{10pt}
		\includegraphics[width=0.42\textwidth]{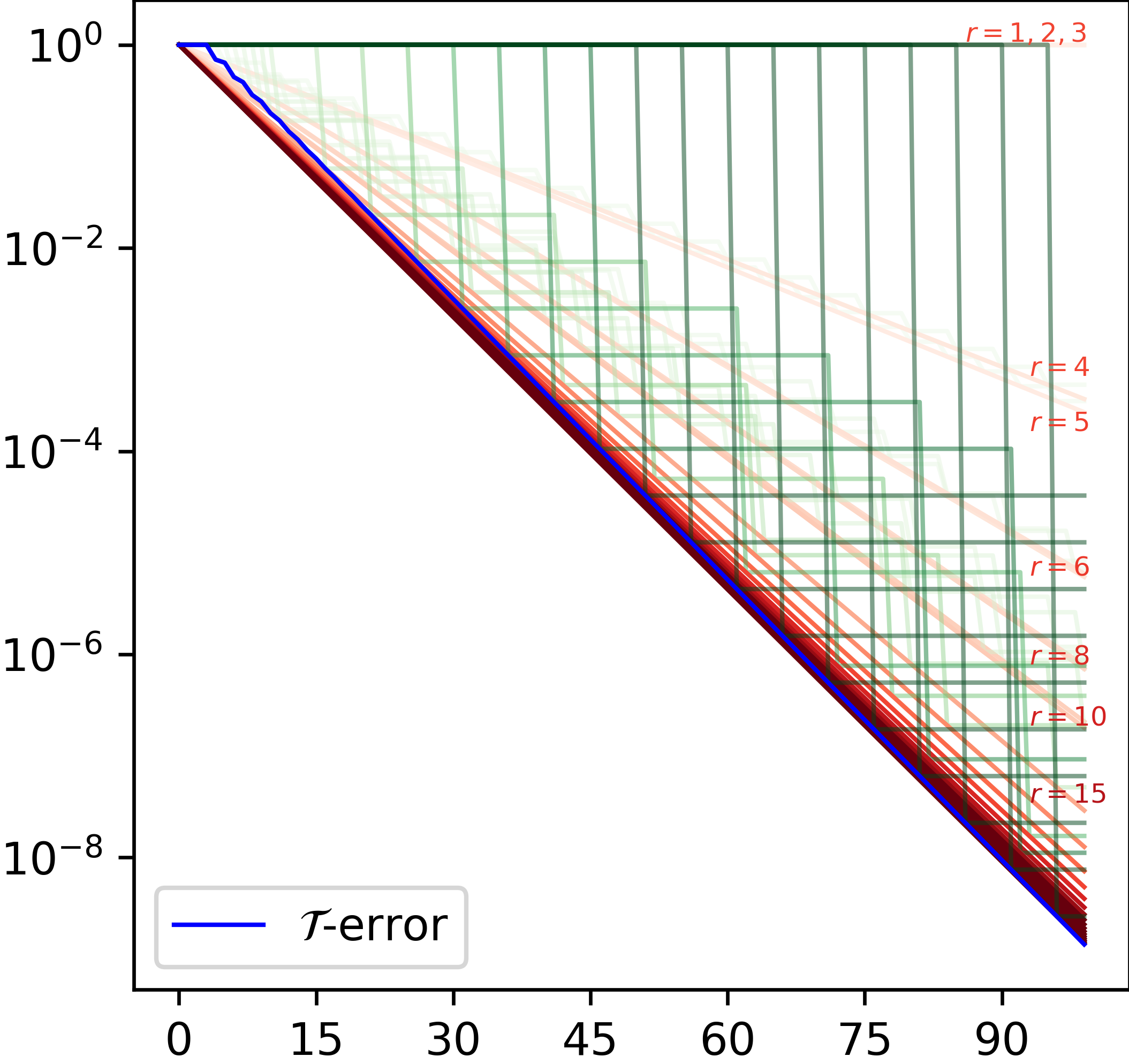}	
		\caption{Short-time (left) and long-time (right) behaviour of the $\mathcal{T}$-error between the distributions of the Markov chain specified in Remark~10 started from two different Dirac measures. The $y$-axis is in log-scale. We plot the $\mathcal{T}$-error in blue. On the left, we plot the bound \eqref{eq:floor_bound} for $r= 1, \dots, 12$ in green (progressively darker as $r$ increases). On the right, for $r = 1, \dots, 10, 15, \dots, 100$, we plot the bounds \eqref{eq:floor_bound} in green, and lines with slope $\frac{\log \tau(P^r)}{r}$, corresponding to the right-hand side of \eqref{eq:r_root_bound} fixing $C_r =1$, in red (progressively darker).}
		\label{fig:discrete_MC}
	\end{figure}
	
	Let us move to the continuous-time setting. Let $(X_t)_{t \ge 0}$ be a continuous-time, time-homogeneous Markov chain with finite state-space $E \approxeq \{0, \dots n\}$ and associated transition intensities matrix $Q = (Q_{ij}) \in \R^{(n+1)\times(n+1)}$. Recall that the $Q$-matrix is defined such that its entries for each row sum to 0, and its off-diagonal entries are non-negative, i.e.~$\sum_{j} Q_{ij} = 0$ for all $i$, and $Q_{ij} \geq 0$ for all $i,j \le n+1, \: i \neq j$.
	
	Let $P(t) = (P_{ij}(t)) = (\Prob (X_t = j | X_0 = i))$ be the transition function of $X$. Since $Q$ is the infinitesimal generator of $X$, we have that $P'(t) = QP(t)$, or, in other words, $P(t) = \exp (Qt)$. Then Theorem~\ref{thm:tanh_contaction} for continuous-time Markov chains reads as follows.
	\begin{prop}\label{prop:MC_cts_time_base}
		Let $(X_t)_{t \ge 0}$ be a continuous-time Markov chain on $E \approxeq \{0, \dots n\}$, with associated $Q$-matrix $Q = (Q_{ij}) \in \R^{(n+1) \times (n+1)}$ and transition matrix $P(t) = \exp ( Qt)$. Then for all $\mu, \nu \in \mathcal{S}^n$,
		\begin{equation}\label{eq:MC_cts_time_base}
			\mathcal{T} \big(\mu^\intercal P(t), \nu^\intercal P(t) \big) \le e^{- \lambda t} \mathcal{T} (\mu, \nu),
		\end{equation}
		where $\lambda = 2 \min_{i \neq j} \sqrt{Q_{ij}Q_{ji}}$.
	\end{prop}
	\begin{proof}
		Since $P(t) : \mathcal{S}^n \to \mathcal{S}^n$, we can apply Theorem~\ref{thm:tanh_contaction} to yield
		\begin{equation}\label{eq:MC_cts_base}
			\mathcal{T} \big(\mu^\intercal P(t), \nu^\intercal P(t)\big) \le \tau(P(t)) \mathcal{T} (\mu, \nu),
		\end{equation}
		where $\tau(P(t))$ is Birkhoff's contraction coefficient. For small $\varepsilon \ll 1$, consider $\tau(P(\varepsilon))$. Using \eqref{eq:bir_coeff_matrix} and expanding the matrix exponential, we compute
		\begin{equation*}
			\phi (P(\varepsilon)) = \phi (e^{Q \varepsilon}) 
			= \min_{i,j,k,l} \frac{(\delta_{ik} + Q_{ik} \varepsilon + O(\varepsilon^2))(\delta_{jl} + Q_{jl} \varepsilon + O(\varepsilon^2))}{(\delta_{il} + Q_{il} \varepsilon + O(\varepsilon^2))(\delta_{jk} + Q_{jk} \varepsilon + O(\varepsilon^2))},
		\end{equation*}
		and we see that for small enough $\varepsilon$ the minimum must be attained at some $i = l \neq j = k$, which yields
		\begin{equation*}
			\phi (P(\varepsilon))
			= \min_{i \neq j} \frac{ Q_{ij} Q_{ji} \varepsilon^2 + O(\varepsilon^3)}{1 + F_{ij}}, \quad \text{where} \: F_{ij} = (Q_{ii} + Q_{jj}) \varepsilon + O(\varepsilon^2).
		\end{equation*}
		Since the diagonal entries of $Q$ are non-positive, we have $-1 < F_{ij} \le 0$ for small enough $\varepsilon$, therefore there exists a $C > 0$ with $0 \le C\varepsilon < 1$ such that
		\begin{equation*}
			\min_{i \neq j} Q_{ij} Q_{ji} \varepsilon^2 + O(\varepsilon^3)
			\le
			\phi (P(\varepsilon))
			\le \min_{i \neq j} \frac{ Q_{ij} Q_{ji} \varepsilon^2}{1 - C \varepsilon} + O\Big(\tfrac{\varepsilon^3}{1 - C \varepsilon}\Big).
		\end{equation*}
		Let $\beta = \min_{i \neq j}  Q_{ij} Q_{ji} $. By \eqref{eq:bir_coeff_matrix} and using that $\frac{1-\sqrt{x}}{1+\sqrt{x}}$ is decreasing in $x$, we obtain
		\begin{equation*}
			\frac{ 1 - \sqrt{\beta \frac{ \varepsilon^2}{1 - C \varepsilon} + O\Big(\tfrac{\varepsilon^3}{1 - C \varepsilon}\Big)}}{1 + \sqrt{\beta \frac{ \varepsilon^2}{1 - C \varepsilon} + O\Big(\tfrac{\varepsilon^3}{1 - C \varepsilon}\Big)}}
			\le
			\tau (P(\varepsilon))
			\le 	\frac{1 - \sqrt{\beta \varepsilon^2 + O(\varepsilon^{3})}}{1 +  \sqrt{\beta \varepsilon^2 + O(\varepsilon^{3})}}.
		\end{equation*}
		Taking $\log$ and expanding it as $\log(z) = 2 \sum_{k = 0}^{\infty} \frac{1}{2k + 1} \Big( \frac{z -1}{z +1}\Big)^{2k+1}$ on the left- and right-hand sides, yields
		\begin{align*}
			-2 \Bigg[  \sqrt{\beta \frac{ \varepsilon^2}{1 - C \varepsilon} + O\Big(\tfrac{\varepsilon^3}{1 - C \varepsilon}\Big)} + O\Big(\tfrac{\varepsilon^3}{(1 - C \varepsilon)^{3/2}}\Big)
			\Bigg]
			\le \log \tau (P(\varepsilon))
			\le   - 2 \sqrt{\beta \varepsilon^2 + O(\varepsilon^{3})},
		\end{align*}
		so finally dividing by $\varepsilon$ and taking the limit (recalling that $\log \tau (P(0)) = \log \tau (\mathrm{Id}) = 0$) gives
		\begin{equation*}
			\frac{\di}{\dt} \log \tau(P(t)) \big|_{t=0} = \lim_{\varepsilon \to 0} \frac{1}{\varepsilon} \log \tau (P(\varepsilon)) = -2 \sqrt{\beta}.
		\end{equation*}
		Now recall \eqref{eq:tau_submultiplicativity}. Then
		\begin{equation*}
		\log \tau(P(t+s)) = \log \tau (e^{Q(t+s)})
		\le \log \Big( \tau(e^{Qt}) \tau(e^{Qs})   \Big) = \log \tau(P(t)) + \log \tau(P(s)),
		\end{equation*}
		and $\log \tau (P(t))$ is sub-additive. Moreover, for all $t \ge 0$ we have
		\begin{equation*}
			\frac{\di}{\dt} \log \tau(P(t)) = \lim_{\varepsilon \to 0} \frac{\log \tau (P(t+\varepsilon)) - \log \tau (P(t)) }{\varepsilon} \le \lim_{\varepsilon \to 0} \frac{\log \tau (P(\varepsilon))}{\varepsilon} = \frac{\di}{\dt} \log \tau(P(0)),
		\end{equation*}
		so the first derivative of $\log \tau(P(t))$ is non-increasing, which implies that the second derivative is non-positive. Then $\log \tau(P(t))$ is concave, and therefore it is bounded by $- 2 \sqrt {\beta} t$ for all $t \ge 0$. Exponentiating it, we get
		\begin{equation*}
			\tau(P(t)) \le e^{- 2 \sqrt{\beta} t},
		\end{equation*}
		and letting $\lambda = 2 \sqrt{\beta}$ and inserting this estimate in \eqref{eq:MC_cts_base}, we are done.
	\end{proof}

	Just as in the case of discrete-time Markov chains, we find ourselves in the situation
	that when $Q$ has zero-entries the coefficient $\lambda$ in \eqref{eq:MC_cts_time_base} is $0$ and Proposition \ref{prop:MC_cts_time_base} does not give us any information
	on the stability of $X$. 
	
	Nevertheless, inspired by our observations in Proposition \ref{prop:discrete_primitive_mat} for the
	discrete-time case, we can try to find a bound for $\mathcal{T}(\mu^\top P (t), \nu^\top P (t))$, which may not be a contraction, but still provides information on the decay of the error, given sufficient ergodicity conditions on $P(t)$ (and therefore, on $Q$).  
	
	We begin by deriving some algebraic properties of $Q$-matrices, based on (discrete) paths in the state space.
	\begin{defn}
		For a pair of states $i,j \in E$, a path from $i$ to $j$ is a sequence of states $S_{i \rightsquigarrow j} = (s_k)_{k=0}^m$ such that $s_0 = i$ and $s_m = j$. We say such a path is possible if $Q_{s_{a-1} s_{a}}\neq 0$ for all $a \in S_{i \rightsquigarrow j}$, and that it has no self-transitions if $s_{a} \neq s_{b}$ for all $a \neq b$. The length of a path, which we denote by $| S_{i \rightsquigarrow j }|$, is its number of transitions $m$, and we define shortest possible paths from $i$ to $j$ accordingly (note that there might be multiple paths with the same length between two states). 
  
  Denote the set of shortest possible paths $i \rightsquigarrow j$ by $\mathbf{\hat{S}}_{i \rightsquigarrow j}$, and let $N_{ij}$ be the length of the shortest possible paths from $i$ to $j$ (with the convention $N_{ii}=0$), i.e. $N_{ij} = |S_{i \rightsquigarrow j}|$ for all $S_{i \rightsquigarrow j} \in \mathbf{\hat{S}}_{i \rightsquigarrow j}$. We observe that all paths in $\mathbf{\hat{S}}_{i \rightsquigarrow j}$ have no self-transitions. We define $\hat{N} := \max_{i,j} N_{ij}$. Trivially, $ \hat{N} \le n+1$.
	\end{defn}
	
	The analogue of a primitive generator (for discrete time chains) is irreducibility of the  matrix $Q$: we say that $Q$ is irreducible if it is not permutation-similar to a block-upper-triangular matrix, or equivalently, if for all $i,j$, there exists a possible path $i\rightsquigarrow j$. This is the minimal condition under which our Markov chain is ergodic, or equivalently, such that $\exp(Qt)$ has all entries positive for all $t>0$, and should therefore have a Birkhoff coefficient less than $1$.
	
	\begin{prop}\label{prop:Qpower}
	   Let $Q$ be an irreducible $Q$-matrix. Then for all $i,j \in E$, we have $(Q^m)_{ij}=0$ for $m<N_{ij}$, and 
		\[(Q^{N_{ij}})_{ij} \geq \kappa_{ij}^{N_{ij}} \geq \kappa^{N_{ij}} >0, \]
		where  $\kappa_{ij} := \min_{S_{i \rightsquigarrow j} \in \mathbf{\hat{S}}_{i \rightsquigarrow j} } \{ Q_{kl} \, : \, k, l \in S_{i \rightsquigarrow j} \}$ and $\kappa := \min_{k \neq l } \{ Q_{kl} \, : \, Q_{kl} > 0\}$ .
	\end{prop}
	\begin{proof}
		As $Q$ is irreducible, there exist possible paths between any pair of states. As $Q$ is a $Q$-matrix, any possible path $S_{i\rightsquigarrow j} = (s_k)_{k=0}^m$ of length $m$ without self-transitions satisfies $\prod_{a=1}^m Q_{s_{a-1}s_a} >0$, with $s_0 = i$ and $s_m = j$.  Expanding the $m^{\mathrm{th}}$ power of $Q$, 
		\[(Q^m)_{ij} = \sum_{s_1,...,s_{m-1}} Q_{i s_1}Q_{s_1 s_2}\cdots Q_{s_{m-1} j} = \sum_{s_1,...,s_{m-1}} Q_{i s_1} \Big(\prod_{a=2}^{m-1} Q_{s_{a-1}s_a}\Big) Q_{s_{m-1} j}.\]
		By definition, all paths of length $m<{N_{ij}}$ are not possible, so $(Q^m)_{ij}=0$ for $m<{N_{ij}}$. Further, any path $i\rightsquigarrow j$ of length ${N_{ij}}$ is either not possible (so does not contribute to the sum), or is a shortest path, which implies it belongs to $\mathbf{\hat{S}}_{i \rightsquigarrow j}$, and in particular has no self-transitions. It follows that $(Q^{N_{ij}})_{ij}$ is the sum of non-negative terms of the form $Q_{i s_1} \Big(\prod_{a=2}^{N_{ij}-1} Q_{s_{a-1}s_a}\Big) Q_{s_{N_{ij}-1} j}$, each associated to a path in $\mathbf{\hat{S}}_{i \rightsquigarrow j}$. Minimizing over all possible shortest paths (i.e. over $\mathbf{\hat{S}}_{i \rightsquigarrow j}$), we have that the smallest term in the sum must be at least of size $\kappa^{N_{ij}}>0$.
	\end{proof}
	
	\begin{lemma}\label{lemma:Qpower2}
		Let $Q$ be a $Q$-matrix on $n+1$ states. Then $|(Q^m)_{ij}|\leq K^m $ for all $i,j \in E$, where $K=\|Q\|_{\mathrm{op}}=\sup_{\|a\|\leq 1}\|Qa\|= 2\max_i|Q_{ii}|$, with $\|\cdot\|$ the $\ell_\infty$ norm on $\mathbb{R}^{n+1}$.
	\end{lemma}
	\begin{proof}
		By a standard recursion, $\| Q^m\|_{\mathrm{op}}\leq K^m$. The explicit bound comes from observing $\|Q\|_{\mathrm{op}} = \max_{0 \le i \le n} \sum_j|Q_{ij}| = 2\max_i|Q_{ii}|$.
	\end{proof}
	\begin{rmk}
		Alternative norms can be used to give tighter bounds on the value $K$ (which is only used to bound $|(Q^m)_{ij}|$ in what follows), if the Markov chain is known to have particular structure.
	\end{rmk}
	
	\begin{lemma}\label{lemma:Qexpbounds}
		Let $Q$ be an irreducible $Q$-matrix on $n+1$ states. Write  $P(t) = \exp(Qt)$ and
		$K = \|Q\|_{\mathrm{op}}$. Then for any $\varepsilon\geq 0$,
		\[\Big|P(\varepsilon)_{ij} - \frac{1}{N_{ij}!}(Q^{N_{ij}})_{ij}\varepsilon^{N_{ij}}\Big| \leq  \frac{(K\varepsilon)^{N_{ij}+1}}{(N_{ij}+1)!}e^{K\varepsilon}, \quad \forall i,j \in E.\]
	\end{lemma}
	\begin{proof}
		As $(Q^{m})_{ij}=0$ for all $m<N_{ij}$,  we have
		\[P(\varepsilon)_{ij} = \frac{1}{N_{ij}!}(Q^{N_{ij}})_{ij}\varepsilon^{N_{ij}}+ \sum_{m>N_{ij}} \frac{1}{m!}(Q^{m})_{ij}\varepsilon^m,\]
		and hence, as $(m+N_{ij}+1)!\geq m!(N_{ij}+1)!$,
		\begin{align*}
			\Big|P(\varepsilon)_{ij} - \frac{1}{N_{ij}!}(Q^{N_{ij}})_{ij}\varepsilon^{N_{ij}}\Big|&\leq \frac{\varepsilon^{N_{ij}+1}}{(N_{ij}+1)!}\sum_{m\geq 0} \frac{1}{m!}|(Q^{m+N_{ij}+1})_{ij}|\varepsilon^m\\
			&\leq \frac{(K\varepsilon)^{N_{ij}+1}}{(N_{ij}+1)!}\sum_{m\geq 0} \frac{1}{m!}(K\varepsilon)^{m}\\
			&= \frac{(K\varepsilon)^{N_{ij}+1}}{(N_{ij}+1)!}e^{K\varepsilon}.
		\end{align*}
	\end{proof}

	\begin{thm}\label{thm:Birkhoff_irreducible_cts}
		Let $(X_t)_{t \ge 0}$ be a continuous-time Markov chain on $E \approxeq \{0, \dots n\}$ with irreducible $Q$-matrix $Q = (Q_{ij}) \in \R^{(n+1) \times (n+1)}$. Write $K=\|Q\|_{\mathrm{op}}= 2\max_{i}|Q_{ii}|$ and $\kappa = \min_{\{kl:Q_{kl}>0\}}Q_{kl}$, and $N_{ij}$ for the length of the shortest possible paths $i\rightsquigarrow j$. Let $\tau(P(t)) = \sup_{\mu,\nu}\mathcal{T} (\mu^\intercal P(t), \nu^\intercal P(t))$ denote the Birkhoff coefficient associated with the transition matrix $P(t) = \exp(Qt)$.
		
		For any $\alpha>1$, $\beta\in (0,1)$, for all $\varepsilon\leq \bar\varepsilon$, we know 
		\[\tau(P(\varepsilon)) \leq \frac{1-\eta\varepsilon^{\rho}}{1+\eta\varepsilon^{\rho}},\]
		where 
		\begin{align*}
			\rho & = \frac{1}{2}\max_{ij}\{N_{ij}+N_{ji}\}\leq \hat N, &\hat N &=\max_{ij}N_{ij},\\
			\bar\varepsilon&= 1\wedge \frac{\log(\alpha)}{K} \wedge  \frac{\beta}{\alpha }(\hat N+1)\Big(\frac{\kappa}{K}\Big)^{\hat N+1}, &
			\eta &=\frac{1-\beta}{1+\beta}\Big(\frac{\kappa}{K}\Big)^{\hat N}\frac{1}{\hat N!}.
		\end{align*}
	\end{thm}
	\begin{corollary}\label{cor:pasted_tau_estimate}
		With all parameters as above, for all $t>0$,  by separating $[0,t]$ into intervals of length at most $\varepsilon\leq \bar\varepsilon$, the sub-multiplicativity of $\tau$ implies
		\[\tau(P(t)) \leq \Big(\frac{1-\eta\varepsilon^{\rho}}{1+\eta\varepsilon^{\rho}}\Big)^{\lfloor t/\varepsilon\rfloor}\Big(\frac{1-\eta(t-\bar\varepsilon\lfloor t/\varepsilon\rfloor)^{\rho}}{1+\eta(t-\bar\varepsilon\lfloor t/\varepsilon\rfloor)^{\rho}}\Big) \leq  \Big(\frac{1-\eta\varepsilon^{\rho}}{1+\eta\varepsilon^{\rho}}\Big)^{t/\varepsilon-1}.\]
	\end{corollary}
	\begin{proof}[Proof of Theorem \ref{thm:Birkhoff_irreducible_cts}]
		We consider the quantity
		\[\varphi_{ijkl}(\varepsilon) := \frac{P(\varepsilon)_{ik}P(\varepsilon)_{jl}}{P(\varepsilon)_{il}P(\varepsilon)_{jk}}.\]
		For ease of notation, we write 
		\[A_{ij} = \frac{1}{N_{ij}!}(Q^{N_{ij}})_{ij}, \qquad B_{ij} = \frac{\alpha K^{N_{ij}+1}}{(N_{ij}+1)!},\]
		so that $A_{ij}>0$ and $B_{ij}>0$, and Lemma \ref{lemma:Qexpbounds} guarantees
		\begin{align*}
			\Big|P(\varepsilon)_{ij} - A_{ij}\varepsilon^{N_{ij}}\Big| &\leq   \frac{(K\varepsilon)^{N_{ij}+1}}{(N_{ij}+1)!}e^{K\varepsilon} = B_{ij} \varepsilon^{N_{ij}+1} \frac{e^{K\varepsilon}}{\alpha}.
		\end{align*}
		As $\varepsilon\leq \frac{\log(\alpha)}{K}$, we  know $\frac{e^{K\varepsilon}}{\alpha}\in[0,1]$, and hence
  \[A_{ij}\varepsilon^{N_{ij}} - B_{ij}\varepsilon^{N_{ij}+1}\leq P(\varepsilon)_{ij} \leq A_{ij} \varepsilon^{N_{ij}} + B_{ij}\varepsilon^{N_{ij}+1}.\]
  Substituting in the definition of $\varphi_{ijkl}$, we have
		\begin{equation}\label{eq:phibound}
			\begin{split}
				\varphi_{ijkl}(\varepsilon)& \geq  \frac{\Big(A_{ik}\varepsilon^{N_{ik}}-B_{ik}\varepsilon^{N_{ik}+1}\Big)\Big(A_{jl}\varepsilon^{N_{jl}}-B_{jl}\varepsilon^{N_{jl}+1}\Big)}{\Big(A_{il}\varepsilon^{N_{il}}+B_{il}\varepsilon^{N_{il}+1}\Big)\Big(A_{jk}\varepsilon^{N_{jk}}+B_{jk}(\varepsilon)^{N_{jk}+1}\Big)}\\
				&=\frac{\Big(A_{ik}-B_{ik}\varepsilon\Big)\Big(A_{jl}-B_{jl}\varepsilon\Big)}{\Big(A_{il}+B_{il}\varepsilon\Big)\Big(A_{jk}+B_{jk}\varepsilon\Big)}\varepsilon^{N_{ik}+N_{jl}- N_{il}-N_{jk}}.
		\end{split}\end{equation}
		By Proposition \ref{prop:Qpower} we know that $(Q^{N_{ij}})_{ij}\geq \kappa^{N_{ij}}$ for all $i,j$. Therefore, 
		\[
		\min_{ij}\Big\{\frac{A_{ij}}{B_{ij}}\Big\} = \min_{ij}\Big\{\frac{(N_{ij}+1)(Q^{N_{ij}})_{ij}}{\alpha K^{N_{ij}+1}}\Big\}\geq\min_{ij}\Big\{\frac{(N_{ij}+1)}{\alpha (K/\kappa)^{N_{ij}+1}}\Big\} = \frac{1}{\alpha}(\hat N+1) \Big(\frac{\kappa}{K}\Big)^{\hat N+1}
		\]
		where the final equality follows from observing that the map $x\mapsto xy^{-x}$ is decreasing in $x$ for $x\ge-\log(y)^{-1}$, and we know $K/\kappa\geq 2$ and $N_{ij}+1\geq 2$. From the definition of $\bar\varepsilon$,
		\[\bar\varepsilon\leq  \frac{\beta}{\alpha}(\hat N+1) \Big(\frac{\kappa}{K}\Big)^{\hat N+1} \leq \beta \min_{ij}\Big\{\frac{A_{ij}}{B_{ij}}\Big\},\]
		and hence, for $\varepsilon\leq\bar\varepsilon$, we know that  $A_{ij}-B_{ij}\varepsilon \geq (1-\beta)A_{ij}$ and $A_{ij}+B_{ij}\varepsilon\leq (1+\beta)A_{ij}$ for all $i,j$. Returning to \eqref{eq:phibound},
		\[      \varphi_{ijkl}(\varepsilon) \geq \Big(\frac{1-\beta}{1+\beta}\Big)^2
		\frac{A_{ik}A_{jl}}{A_{il}A_{jk}}\varepsilon^{N_{ik}+N_{jl}- N_{il}-N_{jk}},\]
		and hence
		\begin{align*}
			\frac{1}{2}\log\varphi_{ijkl}(\varepsilon) &\geq 
			\frac{N_{ik}+N_{jl}- N_{il}-N_{jk}}{2}\log(\varepsilon) +\frac{1}{2}\log\frac{A_{ik}A_{jl}}{A_{il}A_{jk}}+\log\frac{1-\beta}{1+\beta}.
		\end{align*}
		As the $N_{ij}$ are the lengths of shortest paths, we know that $N_{ik}-N_{il}\leq N_{lk}$ for all $i,k,l$. Therefore,
		\[0\leq  N_{ik}+N_{jl}- N_{il}-N_{jk} \leq N_{lk} + N_{kl}\]
		and the upper bound is attained if $i=l\neq j=k$. As $\varepsilon\leq \bar\varepsilon\leq 1$, this implies 
		\begin{equation}\label{eq:phibound2}
			\frac{1}{2}\log\varphi_{ijkl}(\varepsilon) 
			\geq 
			\rho \log(\varepsilon)+\frac{1}{2}\log\frac{A_{ik}A_{jl}}{A_{il}A_{jk}}+\log\frac{1-\beta}{1+\beta}.
		\end{equation}
		Using Proposition \ref{prop:Qpower} and Lemma \ref{lemma:Qpower2},
		\begin{equation}\label{eq:Abound}
			\frac{1}{2}\log\frac{A_{ik}A_{jl}}{A_{il}A_{jk}} =  \frac{1}{2}\log\frac{(Q^{N_{ik}})_{ik}(Q^{N_{jl}})_{jl}}{(Q^{N_{il}})_{il}(Q^{N_{jk}})_{jk}}-\frac{1}{2}\log\frac{N_{ik}!N_{jl}!}{N_{il}!N_{jk}!}
			\geq  \hat N\log\frac{\kappa}{K}-\log(\hat N!)
		\end{equation}
		from which we see 
		\[   \frac{1}{2}\log\varphi_{ijkl}(\varepsilon) \geq \rho\log(\varepsilon)+\log \eta.\]
		
		We can now compute the Birkhoff coefficient, which we know from Proposition \ref{prop:contraction_coeff} can be written
		\[
		\tau(P(\varepsilon)) = \frac{1-\sqrt{\min_{ijkl}\varphi_{ijkl}(\varepsilon)}}{1+\sqrt{\min_{ijkl}\varphi_{ijkl}(\varepsilon)}}\leq \frac{1-\eta\varepsilon^{\rho}}{1+\eta\varepsilon^{\rho}}.
		\]
	\end{proof}
	
	\begin{rmk}
		The value of $\rho$ has an elegant interpretation as half the length of the shortest loop $i\rightsquigarrow j \rightsquigarrow i$, maximized over $i,j$. If $Q_{ij}>0 \Leftrightarrow Q_{ji}>0$, then this is simply the diameter $\hat N$ of the underlying (undirected) graph of the Markov chain.
		
		In the case where all transitions are possible, so $N_{ij} =1 $ for all $i\neq j$, we see that $\rho=1$. In this case, it is possible to recover a result similar to Proposition \ref{prop:MC_cts_time_base} using Corollary \ref{cor:pasted_tau_estimate} and the limits $\varepsilon\to 0$ then $\beta\to 0$. The rate we achieve is poorer, mainly due to the inequality in \eqref{eq:Abound}; this can be tightened by taking $\varepsilon$ sufficiently small that we only need consider $i=l\neq j=k$ when deriving \eqref{eq:phibound2}, in which case the right hand side of \eqref{eq:Abound} becomes $\frac{1}{2}\min_{ij}\{\log((Q^{N_{ij}})_{ij}(Q^{N_{ji}})_{ji})) - \log(N_{ij}!N_{ji}!)\}$. With this change, we recover the rate in Proposition \ref{prop:MC_cts_time_base}.
		
		For $\rho>1$, we see that there exists $\lambda\geq 2\eta$ such that $\tau(P(\varepsilon)) = 1 -\lambda \varepsilon^\rho +O(\varepsilon^{2\rho})$, and one can check that this is the correct asymptotic behaviour as $\varepsilon\to 0$. Heuristically, this suggests that obtaining a larger value of $\bar\varepsilon$ is more significant than a large value of $\eta$, when it comes to the resulting rate estimate.
	\end{rmk}
	
	\newpage
	
	%%% bibliography
	\bibliographystyle{plain}
	\bibliography{biblio}

\begin{thebibliography}{10}

\bibitem{amari85}
S.~Amari.
\newblock {\em Differential-geometrical methods in statistics}, volume~28 of
  {\em Lecture Notes in Statistics}.
\newblock Springer-Verlag, New York, 1985.

\bibitem{amari16}
S.~Amari.
\newblock {\em Information geometry and its applications}, volume 194 of {\em
  Applied Mathematical Sciences}.
\newblock Springer, Tokyo, 2016.

\bibitem{atar-zeitouni97}
R.~Atar and O.~Zeitouni.
\newblock Exponential stability for nonlinear filtering.
\newblock {\em Ann. Inst. H. Poincar\'{e} Probab. Statist.}, 33(6):697--725,
  1997.

\bibitem{atar-zeitouni-lyapunov97}
R.~Atar and O.~Zeitouni.
\newblock Lyapunov exponents for finite state nonlinear filtering.
\newblock {\em SIAM J. Control Optim.}, 35(1):36--55, 1997.

\bibitem{bax-chiga-lip04}
P.~Baxendale, P.~Chigansky, and R.~Liptser.
\newblock Asymptotic stability of the {W}onham filter: ergodic and nonergodic
  signals.
\newblock {\em SIAM J. Control Optim.}, 43(2):643--669, 2004.

\bibitem{Benoist03}
Y.~Benoist.
\newblock Convexes hyperboliques et fonctions quasisym\'{e}triques.
\newblock {\em Publ. Math. Inst. Hautes \'{E}tudes Sci.}, (97):181--237, 2003.

\bibitem{birkhoff57}
G.~Birkhoff.
\newblock Extensions of {J}entzsch's theorem.
\newblock {\em Trans. Amer. Math. Soc.}, 85:219--227, 1957.

\bibitem{birkhoff_lattice}
G.~Birkhoff.
\newblock {\em Lattice theory}.
\newblock American Mathematical Society Colloquium Publications, Vol. XXV.
  American Mathematical Society, Providence, R.I., third edition, 1967.

\bibitem{bogachev}
V.~I. Bogachev.
\newblock {\em Measure theory. {V}ol. {I}, {II}}.
\newblock Springer-Verlag, Berlin, 2007.

\bibitem{bremaud2017discrete}
P.~Br{\'e}maud.
\newblock {\em Discrete probability models and methods}.
\newblock Springer, 2017.

\bibitem{bushell73}
P.~J. Bushell.
\newblock Hilbert's metric and positive contraction mappings in a {B}anach
  space.
\newblock {\em Arch. Rational Mech. Anal.}, 52:330--338, 1973.

\bibitem{entropic_disp_hilbert}
Y.~Chen, T.~Georgiou, and M.~Pavon.
\newblock Entropic and displacement interpolation: a computational approach
  using the {H}ilbert metric.
\newblock {\em SIAM J. Appl. Math.}, 76(6):2375--2396, 2016.

\bibitem{cobzas_lipschitz}
\c{S}. Cobza\c{s}, R.~Miculescu, and A.~Nicolae.
\newblock {\em Lipschitz functions}, volume 2241 of {\em Lecture Notes in
  Mathematics}.
\newblock Springer, Cham, 2019.

\bibitem{cohen_fausti_23}
S.~N. Cohen and E.~Fausti.
\newblock Exponential contractions and robustness for approximate {W}onham
  filters.
\newblock {\em arXiv preprint arXiv:2305.02256}, 2023.

\bibitem{conway_functional_analysis}
J.~B. Conway.
\newblock {\em A course in functional analysis}, volume~96 of {\em Graduate
  Texts in Mathematics}.
\newblock Springer-Verlag, New York, second edition, 1990.

\bibitem{delaHarpe_simplices}
P.~de~la Harpe.
\newblock On {H}ilbert's metric for simplices.
\newblock In {\em Geometric group theory, {V}ol. 1 ({S}ussex, 1991)}, volume
  181 of {\em London Math. Soc. Lecture Note Ser.}, pages 97--119. Cambridge
  Univ. Press, Cambridge, 1993.

\bibitem{dubois09}
L.~Dubois.
\newblock Projective metrics and contraction principles for complex cones.
\newblock {\em J. Lond. Math. Soc. (2)}, 79(3):719--737, 2009.

\bibitem{eckstein24}
S.~Eckstein.
\newblock Hilbert's projective metric for functions of bounded growth and
  exponential convergence of sinkhorn's algorithm.
\newblock {\em arXiv preprint arXiv:2311.04041}, 2024.

\bibitem{ethier2009markov}
S.N. Ethier and T.G. Kurtz.
\newblock {\em Markov processes: characterization and convergence}.
\newblock John Wiley \& Sons, 2009.

\bibitem{franklin_lorenz_sinkhorn}
Joel Franklin and Jens Lorenz.
\newblock On the scaling of multidimensional matrices.
\newblock {\em Linear Algebra Appl.}, 114/115:717--735, 1989.

\bibitem{hilbert1895}
D.~Hilbert.
\newblock Ueber die gerade linie als k\"urzeste verbindung zweier punkte.
\newblock {\em Mathematische Annalen}, (46):91--96, 1895.

\bibitem{hopf63}
E.~Hopf.
\newblock An inequality for positive linear integral operators.
\newblock {\em J. Math. Mech.}, 12:683--692, 1963.

\bibitem{jacod2003limit}
J.~Jacod and A.N. Shiryaev.
\newblock {\em Limit theorems for stochastic processes}.
\newblock Springer, 2nd edition, 2013.

\bibitem{jentzsch}
R.~Jentzsch.
\newblock \"{U}ber {I}ntegralgleichungen mit positivem {K}ern.
\newblock {\em J. Reine Angew. Math.}, 141:235--244, 1912.

\bibitem{kallenberg1997foundations}
O.~Kallenberg.
\newblock {\em Foundations of modern probability}.
\newblock Springer, 1997.

\bibitem{kohlberg82}
E.~Kohlberg and J.~W. Pratt.
\newblock The contraction mapping approach to the {P}erron-{F}robenius theory:
  why {H}ilbert's metric?
\newblock {\em Math. Oper. Res.}, 7(2):198--210, 1982.

\bibitem{krein_rutman48}
M.~G. Kre\u{\i}n and M.~A. Rutman.
\newblock Linear operators leaving invariant a cone in a {B}anach space.
\newblock {\em Uspehi Matem. Nauk (N.S.)}, 3(1(23)):3--95, 1948.

\bibitem{legland00_matrices}
F.~Le~Gland and L.~Mevel.
\newblock Basic properties of the projective product with application to
  products of column-allowable nonnegative matrices.
\newblock {\em Math. Control Signals Systems}, 13(1):41--62, 2000.

\bibitem{legland00_hmm}
F.~Le~Gland and L.~Mevel.
\newblock Exponential forgetting and geometric ergodicity in hidden {M}arkov
  models.
\newblock {\em Math. Control Signals Systems}, 13(1):63--93, 2000.

\bibitem{legland04}
F.~Le~Gland and N.~Oudjane.
\newblock Stability and uniform approximation of nonlinear filters using the
  {H}ilbert metric and application to particle filters.
\newblock {\em Ann. Appl. Probab.}, 14(1):144--187, 2004.

\bibitem{nonlinear_perron12}
B.~Lemmens and R.~Nussbaum.
\newblock {\em Nonlinear {P}erron-{F}robenius theory}, volume 189 of {\em
  Cambridge Tracts in Mathematics}.
\newblock Cambridge University Press, Cambridge, 2012.

\bibitem{meyn2009markov}
S.P. Meyn and R.L. Tweedie.
\newblock {\em Markov chains and stochastic stability}.
\newblock Cambridge, 2nd edition, 2009.

\bibitem{nielsen22}
F.~Nielsen and K.~Sun.
\newblock Non-linear embeddings in hilbert simplex geometry.
\newblock {\em arXiv preprint arXiv:2203.11434}, 2022.

\bibitem{phadke_hexagons75}
B.~B. Phadke.
\newblock A triangular world with hexagonal circles.
\newblock {\em Geometriae Dedicata}, 3:511--520, 1974/75.

\bibitem{rugh10}
H.~H. Rugh.
\newblock Cones and gauges in complex spaces: spectral gaps and complex
  {P}erron-{F}robenius theory.
\newblock {\em Ann. of Math. (2)}, 171(3):1707--1752, 2010.

\bibitem{samelson57}
H.~Samelson.
\newblock On the {P}erron-{F}robenius theorem.
\newblock {\em Michigan Math. J.}, 4:57--59, 1957.

\bibitem{seneta_matrices}
E.~Seneta.
\newblock {\em Non-negative matrices and {M}arkov chains}.
\newblock Springer Series in Statistics. Springer-Verlag, New York, 2006.
\newblock Reprint of the second edition (1981).

\bibitem{seneta81}
E.~Seneta and S.~Sheridan.
\newblock Strong ergodicity of nonnegative matrix products.
\newblock {\em Linear Algebra Appl.}, 37:277--292, 1981.

\bibitem{thompson63}
A.~C. Thompson.
\newblock On certain contraction mappings in a partially ordered vector space.
\newblock {\em Proc. Amer. Math. Soc.}, 14:438--443, 1963.

\end{thebibliography}
	
\end{document}